\newtheorem*{theorem*}{Theorem}
\numberwithin{equation}{section}
\def\Re{{\sf Re}\,}
\def\1#1{\overline{#1}}
\def\2#1{\widetilde{#1}}
\def\3#1{\widehat{#1}}
\def\4#1{\mathbb{#1}}
\def\5#1{\frak{#1}}
\def\6#1{{\mathcal{#1}}}
\newcommand{\R}{\mathbb R}
\newcommand{\C}{\mathbb C}
\newcommand{\B}{\mathbb B}
\newcommand{\Hol}{{\sf Hol}}
\newcommand{\D}{\mathbb D}
\newcommand{\N}{\mathbb N}
\def\Re{{\sf Re}\,}
\newcommand{\abs}[1]{\left|#1\right|}
\newcommand{\norm}[1]{\left\|#1\right\|}
\theoremstyle{theorem}
\def\id{{\sf id}}
\def\Re{{\sf Re}\,}
\newtheorem{theorem}{Theorem}[section]
\newtheorem{lemma}[theorem]{Lemma}
\newtheorem{proposition}[theorem]{Proposition}
\newtheorem{corollary}[theorem]{Corollary}
\theoremstyle{definition}
\newtheorem{definition}[theorem]{Definition}
\newtheorem{example}[theorem]{Example}
\theoremstyle{remark}
\newtheorem{remark}[theorem]{Remark}
\newtheorem{question}[theorem]{Question}
\numberwithin{equation}{section}
\title[Homeomorphic extension of quasi-isometries]{Homeomorphic extension of quasi-isometries for convex domains in $\mathbb C^d$ and iteration theory}
\author[F. Bracci]{Filippo Bracci$^\dag$
}
\address{F. Bracci: Dipartimento di Matematica, Universit\`a di Roma ``Tor Vergata", Via della Ricerca Scientifica 1, 00133, Roma, Italia.} \email{fbracci@mat.uniroma2.it}
\author[H. Gaussier]{Herv\'e Gaussier$^\ddag$
}
\address{H. Gaussier: Univ. Grenoble Alpes, CNRS, IF, F-38000 Grenoble, France}
\email{herve.gaussier@univ-grenoble-alpes.fr}
\author[A. Zimmer]{Andrew Zimmer$^*$
}
\address{A. Zimmer: Department of Mathematics, Louisiana State University, Baton Rouge, LA USA }
\email{amzimmer@lsu.edu}
\keywords{boundary extension; Gromov hyperbolicity; commuting maps}
\thanks{$^\dag\,$Partially supported by the MIUR Excellence Department Project awarded to the  
Department of Mathematics, University of Rome Tor Vergata, CUP E83C18000100006}
\thanks{$^\ddag\,$Partially supported by ERC ALKAGE}
\thanks{$^*\,$Partially supported by the National Science Foundation under grants DMS-1760233 and DMS-1904099.}
\begin{document}

\begin{abstract}
We study the homeomorphic extension of biholomorphisms between convex  domains in $\mathbb C^d$ without boundary regularity and boundedness assumptions. Our approach relies on methods from coarse geometry,  namely the correspondence between the Gromov boundary and the topological boundaries of the domains and the dynamical properties of commuting 1-Lipschitz maps in Gromov hyperbolic spaces. This approach not only allows us to prove extensions for biholomorphisms, but for more general quasi-isometries between the domains endowed with their Kobayashi distances. 
\end{abstract}

\maketitle

\sloppy
\section{Introduction and results}
The aim of the paper is to investigate boundary extension of biholomorphisms, and more generally of quasi-isometries, between domains in the complex Euclidean space $\mathbb C^d$, $d\geq 1$, under some (geo)metric assumptions on the domains, regardless of boundary regularity or boundedness of the domains.

The Fefferman extension theorem \cite{Fe74} states that every biholomorphism between bounded strongly pseudoconvex domains with $C^\infty$ boundaries extends as a $C^\infty$ diffeomorphism to the closures of the domains. This seminal result reduces the equivalence problem between such domains to the comparison of CR invariants of the boundaries of the domains.  In the case where the domains are assumed neither smooth nor bounded, the question of homeomorphic extension of biholomorphisms seems quite difficult to attack with methods from Complex Analysis and Geometry.

Motivated by a result of Balogh and Bonk~\cite{BaBo00}, we consider this problem from a coarse geometry point of view. Balogh and Bonk proved that the Kobayashi distance on a bounded strongly pseudoconvex domain is Gromov hyperbolic and the Gromov boundary coincides with the Euclidean boundary. For Gromov hyperbolic metric spaces, it is a well known fact that homeomorphic quasi-isometries extend as homeomorphisms between the Gromov compactifications of the metric spaces, see~\cite{GhHa}. Since every biholomorphism between two domains is an isometry when the domains are endowed with their Kobayashi distances, this provides a new proof that every biholomorphism between strongly pseudoconvex domains extends to a homeomorphism of the Euclidean closures. Although this conclusion is weaker than Fefferman's result, it holds for a much larger class of maps - those that are quasi-isometries relative to the Kobayashi distances. 

The Gromov boundary of a Gromov hyperbolic metric space has a quasi-conformal structure which is preserved by isometries (see for instance~\cite[Section 3]{KB2002}). Recently Capogna and Le Donne~\cite{CL2017} used this quasi-conformal structure and results about the regularity of conformal maps between sub-Riemannian manifolds to provide a new proof of Fefferman's smooth extension theorem. Thus for bounded strongly pseudoconvex domains the extension theory of biholomorphisms can be completely understood using the theory of Gromov hyperbolic metric spaces.

These results motivate the following question:

\begin{question}\label{question:one} For which domains $D \subset \mathbb{C}^d$ is the Kobayashi distance $K_D$ a complete Gromov hyperbolic distance? 
\end{question}

To apply the theory of Gromov hyperbolicity to the problem of extensions of biholomorphisms, one also needs to identify the Gromov boundary with the Euclidean boundary. For unbounded domains in $\mathbb{C}^d$ there are several natural choices of an Euclidean compactification, but it seems like the right one is the end compactification of the closure  (see Section \ref{Sec:5}). 

\begin{definition} Given a domain $D \subset \mathbb{C}^d$ the \emph{Euclidean end compactification of $D$}, denoted by $\overline{D}^{\star}$, is defined to be the end compactification of $\overline{D}$.
\end{definition} 

\begin{remark} \ 
\begin{enumerate}
\item When $D\subset \mathbb C^d$ is an unbounded convex domain, the end compactification of $\overline{D}$ has either one or two points ``at infinity'' (see Section \ref{Sec:5}). 
\item It is important that we are taking the end compactification of $\overline{D}$ instead of $D$; if $D \subset \C^d$ is a convex domain, then the end compactification of $D$ is just the one point compactification of $D$. 
\end{enumerate}
\end{remark}

One can then ask:

\begin{question} For which classes of domains $D \subset \mathbb{C}^d$ is the following true: if the Kobayashi distance $K_D$ on $D$ is Cauchy complete and Gromov hyperbolic, then the identity map $\id : D \rightarrow D$ extends to a homeomorphism $\overline{\id}: \overline{D}^{\star} \rightarrow \overline{D}^{G}$ where $\overline{D}^G$ is the Gromov compactification of the metric space $(D,K_D)$?
\end{question}

In dimension one, for a simply connected domain $D \subset \C$ different from $\C$,  the Riemann mapping theorem implies that $(D,K_D)$ is isometric to the real hyperbolic 2-space and hence is Gromov hyperbolic. Further, the Gromov compactification is equivalent to Carath\'eodory's prime ends topology and thus a complete answer to Question~1.3 is given by the Carath\'eodory extension theorem: if $D\subsetneq \C$ is a simply connected domain, the identity map $\id : D \rightarrow D$ extends to a homeomorphism $\overline{\id}: \overline{D}^{\star} \rightarrow \overline{D}^{G}$ if and only if $D$ is a Jordan domain. 

Since every convex  domain in $\C$ (different from $\C$) is a Jordan domain,  the previous question has positive answer for one-dimensional convex domains different from $\C$. Note also that every convex domain in $\C$ which is different from $\C$ is  Gromov hyperbolic with respect to its (complete) Kobayashi distance. 

One aim of this paper is to extend such a result about convex domains to higher dimension, assuming the natural hypotheses suggested by the one dimensional case.

 A convex domain is called \emph{$\mathbb C$-proper} if it does not contain any complex affine lines. For such domains we prove the following. 

\begin{theorem}\label{ext-thm}
Let $D$ be a $\mathbb C$-proper convex domain in $\C^d$. If $(D,K_D)$ is  Gromov hyperbolic, then the identity map $\id : D \rightarrow D$ extends to a homeomorphism $\overline{\id}: \overline{D}^{\star} \rightarrow \overline{D}^{G}$.
\end{theorem}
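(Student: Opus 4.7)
The plan is to apply the standard framework for identifying a topological compactification with the Gromov compactification of a Gromov hyperbolic metric space. Fix a basepoint $o \in D$. The identity $\mathrm{id}: D \to D$ will extend to the desired homeomorphism $\overline{D}^{\star} \to \overline{D}^{G}$ once one establishes the following, for arbitrary sequences $(z_n), (w_n) \subset D$:
\begin{enumerate}
\item[(a)] if $z_n \to \xi \in \overline{D}^{\star} \setminus D$, then $(z_n \mid z_m)_o \to \infty$ as $n, m \to \infty$;
\item[(b)] if $z_n \to \xi$ and $w_n \to \xi$ in $\overline{D}^{\star}$, then $(z_n \mid w_n)_o \to \infty$;
\item[(c)] if $z_n \to \xi$, $w_n \to \eta$ in $\overline{D}^{\star}$ with $\xi \neq \eta$, then $\liminf_n (z_n \mid w_n)_o < \infty$.
\end{enumerate}
Together these produce a well-defined bijection between the ideal boundaries that is continuous from each side; since $\overline{D}^{\star}$ is compact by construction of the end compactification, while $\overline{D}^{G}$ is Hausdorff (and $(D,K_D)$ is proper, as Cauchy completeness of $K_D$ on a $\mathbb C$-proper convex domain combined with the Kobayashi length-space structure yields Hopf--Rinow), the resulting extension is automatically a homeomorphism.

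For points of the finite boundary, (a) and (c) follow from the basic lower estimate $K_D(o,z) \ge -\tfrac{1}{2}\log \mathrm{dist}(z,\partial D) - C$ valid on every convex domain, together with a sharper lower bound on $K_D(z,w)$ when $z$ and $w$ approach different boundary points. The heart of the matter is (b) at a finite $\xi \in \partial D$: two sequences $(z_n), (w_n)$ converging in $\mathbb C^d$ to the same $\xi$ must be joined by a path of uniformly bounded Kobayashi length. A natural candidate is the affine segment $[z_n, w_n]$, which lies in $D$ by convexity; bounding its Kobayashi length can be done via product-type estimates on the infinitesimal Kobayashi metric of a convex domain that control complex tangential and normal contributions separately, combined with the observation that when $z_n, w_n \to \xi$ the Euclidean separation $\lvert z_n - w_n\rvert$ tends to zero no slower than the common distance to $\partial D$.

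For ideal points arising from ends of $\overline{D}$ at infinity, one invokes the classification recorded in the first remark of the introduction: an unbounded $\mathbb C$-proper convex domain has at most two ends in $\overline{D}^{\star}$, each represented by a direction in the recession cone of $D$. Property (a) reduces to $K_D(o,z) \to \infty$ as $z \to \infty$ along the end, again a consequence of $\mathbb C$-properness and standard convex estimates. For (b), one shows that two sequences escaping along the same end stay at uniformly bounded Kobayashi distance from a common geodesic ray; Gromov hyperbolicity combined with convexity should rule out the parabolic behaviour that could otherwise split a single end into multiple Gromov boundary points.

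The main obstacle is (b) at an end at infinity. Whereas at a finite boundary point convexity supplies a short affine bridge between two converging sequences, near infinity no such automatic short path exists, and a priori two sequences escaping along the same end could drift apart in Kobayashi distance. Ruling this out requires combining convexity, $\mathbb C$-properness, and Gromov hyperbolicity in a nontrivial way, presumably by constructing a canonical geodesic ray representing the end and establishing a Busemann-type control on its horoballs.
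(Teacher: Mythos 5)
Your outline leaves the decisive steps unproved, and the one concrete argument you do sketch rests on false claims. First, at a finite boundary point the assertion that two sequences $(z_n),(w_n)\to\xi$ ``must be joined by a path of uniformly bounded Kobayashi length'' is not true: already in the unit disc, $z_n=1-1/n$ and $w_n=1-1/n^2$ both tend to $1$ while $K_{\D}(z_n,w_n)\sim\tfrac12\log n\to\infty$; likewise there is no reason that $\lvert z_n-w_n\rvert$ is controlled by the common distance to $\partial D$, since the sequences are arbitrary. So your route to (b) at finite points collapses. The statement you actually need (same Euclidean limit implies same Gromov limit, and distinct limits imply distinct Gromov limits) is what the paper proves in its Lemmas 6.2 and 6.5: injectivity uses that Gromov hyperbolicity excludes analytic discs in $\partial D$ (Zimmer, via the fact that bounded Kobayashi distance between sequences tending to distinct boundary points forces a disc in the boundary), and the ``same limit'' direction uses that Euclidean segments $[z_0,z]$ are $(A,0)$-quasi-geodesics in convex domains together with the shadowing lemma — none of which appears in your sketch.

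Second, you explicitly concede the main obstacle, namely that a single end at infinity corresponds to a single point of $\partial_G D$, and your guess (``a canonical geodesic ray and Busemann-type control on its horoballs'') is not how it is resolved. The paper's key idea is dynamical: for each direction $v\in S_\infty(D)$ one applies Karlsson's Denjoy--Wolff theorem to the $1$-Lipschitz translation $z\mapsto z+v$ to get a point $\zeta_v\in\partial_G D$ attracting all rays $z_0+\mathbb R_{\geq 0}v$ (and, via convexity of Kobayashi balls and a Busemann-function argument, attracting every sequence going to infinity in the direction $v$); then, when $\overline D$ has one end, the commuting-maps result (Proposition 4.1 of the paper, proved with the thin-triangles and shadowing machinery) applied to the commuting translations $z\mapsto z+v$ and $z\mapsto z+w$ forces $\zeta_v=\zeta_w$ for linearly independent $v,w$, while in the two-ends case $D=\Omega+\mathbb R v$ the line $t\mapsto z_0+tv$ is shown to be a quasi-geodesic, so its two Gromov endpoints are distinct. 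Without this (or some genuine substitute), properties (a)--(c) at the ends are not established, and the proposal does not constitute a proof. Note also that the overall scheme differs from the paper's: the paper builds the map in the opposite direction, $\Phi:\overline D^G\to\overline D^\star$, $\Phi([\sigma])=\lim_{t\to\infty}\sigma(t)$, checks it is well defined, injective, continuous and onto, and concludes by the compact-to-Hausdorff argument — the same soft endgame you invoke, but with the hard content supplied by the lemmas above.
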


\begin{remark}\  \begin{enumerate}
\item Theorem~\ref{ext-thm} does not assume that $D$ is bounded or has smooth boundary. By a result of Barth~\cite{Bar80}, when $D$ is convex the Kobayashi distance is Cauchy complete if and only if  $D$ is (Kobayashi) hyperbolic if and only if $D$ is $\mathbb C$-proper (also see  \cite{BS2009}).
\item There are many examples of convex domains where the Kobayashi distance is Gromov hyperbolic, see~\cite{Zi16,Zi17}. 
\item For convex domains any of the classical invariant distances, such as the Bergman distance, the Carath\'eodory distance or the K\"ahler-Einstein distance, are bi-Lipschitz equivalent~\cite{Fr91}. Since Gromov hyperbolicity is a quasi-isometric invariant, this implies that the above theorem is also true for any of the other classical invariant distances. 
\end{enumerate}
\end{remark}

As a consequence of Theorem~\ref{ext-thm} we obtain an extension result for quasi-isometries between some domains in $\C^d$, see Section~\ref{Sec:2} for precise definitions.

\begin{corollary}\label{eucl-thm}
Let $D$ and $\Omega$ be domains in $\C^d$. We assume:

\begin{enumerate}
\item $D$ is either a bounded, $C^2$-smooth strongly pseudoconvex domain,  or a convex $\mathbb C$-proper domain, such that $(D,K_D)$ is Gromov hyperbolic,

\item $\Omega$ is convex.
\end{enumerate}

Then every quasi-isometric homeomorphism  $F: (D,K_D) \rightarrow (\Omega,K_{\Omega})$ extends as a homeomorphism $\overline{F}: \overline{D}^{\star} \rightarrow \overline{\Omega}^{\star}$. In particular, every biholomorphism $F:D \to \Omega$ extends as a homeomorphism $\overline{F}: \overline{D}^{\star} \rightarrow \overline{\Omega}^{\star}$.
\end{corollary}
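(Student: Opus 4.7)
The plan is to realize the extension $\overline{F}$ as a composition of three homeomorphisms: an identification $h_D : \overline{D}^\star \to \overline{D}^G$, the canonical extension $\overline{F}^G : \overline{D}^G \to \overline{\Omega}^G$ of $F$ between Gromov compactifications, and the inverse $h_\Omega^{-1} : \overline{\Omega}^G \to \overline{\Omega}^\star$ of the analogous identification for $\Omega$, so that $\overline{F} := h_\Omega^{-1} \circ \overline{F}^G \circ h_D$ will be the required extension.

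First I would bootstrap the hypotheses on $\Omega$ so that Theorem~\ref{ext-thm} can be applied to it. Because $F$ is a homeomorphism, distinct points $y_1 \neq y_2$ in $\Omega$ have distinct $F$-preimages in $D$, so the lower quasi-isometry bound, combined with the fact that $K_D$ separates points, forces $K_\Omega(y_1, y_2) > 0$. Hence $\Omega$ is Kobayashi hyperbolic, and by Barth's theorem \cite{Bar80} (recalled in the Remark after Theorem~\ref{ext-thm}), a convex Kobayashi hyperbolic domain is automatically $\mathbb{C}$-proper with Cauchy complete Kobayashi distance. Since $(D,K_D)$ is proper, geodesic, and Gromov hyperbolic, and since Gromov hyperbolicity is a quasi-isometric invariant in the geodesic category, $(\Omega, K_\Omega)$ is also Gromov hyperbolic. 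Theorem~\ref{ext-thm} then supplies the homeomorphism $h_\Omega : \overline{\Omega}^\star \to \overline{\Omega}^G$ extending $\id_\Omega$. The analogous identification $h_D$ for $D$ is supplied either by Theorem~\ref{ext-thm} itself (convex case) or by Balogh--Bonk \cite{BaBo00} in the bounded strongly pseudoconvex case (where $\overline{D}^\star = \overline{D}$ and the Gromov boundary is identified with the Euclidean boundary).

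The final step is classical: since $F$ is a quasi-isometry between proper geodesic Gromov hyperbolic spaces, it extends to a homeomorphism $\overline{F}^G$ between the Gromov compactifications (see \cite{GhHa}). Composing the three pieces gives the desired extension $\overline{F}$, which coincides with $F$ on $D$. The final ``in particular'' statement is then immediate, since every biholomorphism is a $(1,0)$-quasi-isometry for the Kobayashi distance and a fortiori a quasi-isometric homeomorphism. The step I expect to be the most delicate is the bootstrap argument of the second paragraph: one must verify carefully, using only that $F$ is a quasi-isometric homeomorphism, that $K_\Omega$ is genuinely a distance (and not merely a pseudodistance) and that $(\Omega, K_\Omega)$ is a proper geodesic space, so that both the quasi-isometric invariance of Gromov hyperbolicity and the hypotheses of Theorem~\ref{ext-thm} become legitimately applicable to $\Omega$.
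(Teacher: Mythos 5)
Your overall architecture is exactly the paper's: identify $\overline{D}^{\star}$ with $\overline{D}^G$ (via Theorem~\ref{ext-thm} or Balogh--Bonk), do the same for $\Omega$, extend $F$ between Gromov compactifications, and compose. The problem is the bootstrap step you yourself flag as delicate: it does not work as you state it. The lower quasi-isometry bound reads
\begin{align*}
K_\Omega\bigl(F(x_1),F(x_2)\bigr) \ \geq\ \frac{1}{A}\,K_D(x_1,x_2) - B,
\end{align*}
and because of the additive constant $B$ it gives no information whenever $K_D(x_1,x_2) \leq AB$. So ``$F$ injective $+$ $K_D$ separates points $+$ lower bound'' does not force $K_\Omega(y_1,y_2)>0$: a quasi-isometric homeomorphism is perfectly allowed, as far as that inequality is concerned, to send two points at small positive $K_D$-distance to two distinct points at vanishing Kobayashi pseudodistance. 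This matters, because everything downstream in your second paragraph (Barth, hence $(\Omega,K_\Omega)$ proper geodesic, hence quasi-isometric invariance of Gromov hyperbolicity, hence Theorem~\ref{ext-thm} applied to $\Omega$) rests on first knowing that $\Omega$ is Kobayashi hyperbolic. Your argument does suffice for the ``in particular'' case of a biholomorphism, which is a $(1,0)$-quasi-isometry, but not for a general quasi-isometric homeomorphism, which is the actual statement.

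The paper closes this step differently, using properness rather than non-degeneracy at small scales: since $F$ is a homeomorphism it is a proper map, and the rearranged lower bound $K_D(x_1,x_2)\leq A\bigl(K_\Omega(F(x_1),F(x_2))+B\bigr)$ shows that the $F$-preimage of any closed $K_\Omega$-ball is a closed subset of $D$ contained in a $K_D$-ball, hence compact because $(D,K_D)$ is a proper metric space (Theorem~\ref{thm:barth} in the convex case, completeness of the Kobayashi distance in the bounded strongly pseudoconvex case); pushing forward by $F$ shows that closed $K_\Omega$-balls are compact, so $(\Omega,K_\Omega)$ is a proper metric space and Barth's theorem yields that $\Omega$ is $\C$-proper and $(\Omega,K_\Omega)$ is proper geodesic. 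Equivalently, you could argue by contradiction: if $\Omega$ were not $\C$-proper it would contain a complex affine line $L$, along which $K_\Omega\equiv 0$; then $F^{-1}(L)$ would have $K_D$-diameter at most $AB$, hence be relatively compact in $D$, contradicting that it is a closed, noncompact subset of $D$ (as $F$ is a homeomorphism and $L$ is closed and noncompact in $\Omega$). With either repair in place, the remainder of your proof goes through and coincides with the argument given in the paper.
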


In case $D$ is the unit ball (or more generally a strongly convex domain with smooth boundary), the previous extension result for biholomorphisms has been proved in \cite{BrGa17S} in case $\Omega$ is bounded, as an application of a Carath\'eodory prime ends type theory in higher dimension called ``horosphere topology'', and in  \cite{BrGa17} for the case $\Omega$ is unbounded by using a direct argument relying on the dynamics of semigroups of holomorphic self-maps. 

In Section~\ref{Sec:example}  we will provide examples showing that the hypotheses in Corollary~\ref{eucl-thm} are  optimal. 

\subsection{Iterating holomorphic maps} The proof of Theorem~\ref{ext-thm} is, quite surprisingly, based on dynamical properties of commuting semigroups of holomorphic self-maps, and it also provides new information about the dynamics of iterates of holomorphic maps. 

Given a bounded domain $D \subset \mathbb{C}^d$ and a holomorphic self map $f : D \rightarrow D$, Montel's theorem implies that the sequence of iterates of $f$, denoted by $\{f^n\}$, forms a relatively compact set in the space of holomorphic maps $D \rightarrow \overline{D}$. In particular, given a sequence $n_j \rightarrow \infty$ one can always find a subsequence $n_{j_k} \rightarrow \infty$ such that $f^{n_{j_k}}$  converges locally uniformly to a holomorphic map $g : D \rightarrow \overline{D}$.  Surprisingly, there are some cases where the behavior of the limits are independent of the subsequence chosen. This is demonstrated by the classical Denjoy-Wolff theorem:

\begin{theorem*}[Denjoy-Wolff \cite{D1926, W1926}]\label{thm:WD}
Let $f:\mathbb{D} \rightarrow \mathbb{D}$ be a holomorphic map. Then either:
\begin{enumerate}
\item $f$ has a fixed point in $\mathbb{D}$; or
\item there exists a point $\xi \in \partial \mathbb{D}$ so that
\begin{equation*}
\lim_{n \rightarrow \infty} f^n(x) = \xi
\end{equation*}
 for any $x \in \mathbb{D}$, this convergence being uniform on compact subsets of $\mathbb{D}$.
\end{enumerate}
\end{theorem*}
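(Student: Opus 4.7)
The plan is to split the proof along two dichotomies: whether $f$ is an automorphism of $\mathbb{D}$, and, within the non-automorphism case, whether $f$ has a fixed point in $\mathbb{D}$. When $f\in\Aut(\mathbb{D})$, it is a M\"obius transformation and I would classify it by its fixed-point set in $\overline{\mathbb{D}}$: if there is an interior fixed point (elliptic case) then conclusion~(1) holds, otherwise $f$ has one (parabolic) or two (hyperbolic) fixed points on $\partial\mathbb{D}$, and an explicit computation in the upper half-plane model (where $f$ becomes a translation $z\mapsto z+b$ or a dilation $z\mapsto\lambda z$) verifies the locally uniform convergence of the iterates to the unique boundary fixed point, respectively to the attracting one.

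When $f\notin\Aut(\mathbb{D})$, if $f$ has a fixed point in $\mathbb{D}$ we are in case~(1). Otherwise I would produce the boundary attractor $\xi$ by a Brouwer-type argument. For $0<r<1$, the contraction $f_r(z):=rf(z)$ sends $\overline{\mathbb{D}}$ into $r\overline{\mathbb{D}}\subset\mathbb{D}$, so Brouwer's fixed point theorem supplies $z_r\in\mathbb{D}$ with $f_r(z_r)=z_r$; since $f$ has no interior fixed point, every accumulation point of $\{z_r\}$ as $r\to 1^-$ must lie on $\partial\mathbb{D}$. Fix $\xi=\lim_k z_{r_k}\in\partial\mathbb{D}$ along a subsequence $r_k\to 1^-$. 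Applying the Schwarz-Pick lemma to $f_{r_k}$ based at the fixed point $z_{r_k}$, rewriting via the identity $|1-\bar a b|^2-|a-b|^2=(1-|a|^2)(1-|b|^2)$, and passing to the limit $k\to\infty$ yields Wolff's inequality
\[
\frac{|\xi-f(z)|^2}{1-|f(z)|^2}\;\leq\;\frac{|\xi-z|^2}{1-|z|^2}\qquad\text{for all }z\in\mathbb{D}.
\]
This expresses that $f$ preserves every horoball at $\xi$, i.e., every Euclidean disk in $\mathbb{D}$ internally tangent to $\partial\mathbb{D}$ at $\xi$.

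For any $x\in\mathbb{D}$, Wolff's inequality traps the entire orbit $\{f^n(x)\}_{n\geq 0}$ inside a single horoball at $\xi$, whose Euclidean closure meets $\partial\mathbb{D}$ only at $\xi$. Montel's theorem gives normality of $\{f^n\}$, so the conclusion reduces to showing every locally uniform subsequential limit $g$ of $\{f^n\}$ equals the constant $\xi$. Since $g(\mathbb{D})$ lies in the closed horoball, any boundary value of $g$ must be $\xi$; the task is therefore to rule out a non-constant holomorphic self-map $g:\mathbb{D}\to\mathbb{D}$ arising as a subsequential limit. Such a $g$ would commute with $f$, obtained by extracting a further subsequence from the identity $f^{n_k+1}=f\circ f^{n_k}=f^{n_k}\circ f$, and combining this commutation with the strict hyperbolic contractivity $d_{\mathbb{D}}(f(z),f(w))<d_{\mathbb{D}}(z,w)$ enjoyed by non-automorphisms would force an interior fixed point of $f$, contradicting the standing hypothesis. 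Once all subsequential limits are identified as the constant $\xi$, normality promotes pointwise convergence $f^n(x)\to\xi$ to locally uniform convergence on $\mathbb{D}$.

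The hardest step is the exclusion of non-constant subsequential limits in the non-automorphism case. Purely metric reasoning inside the horoball is insufficient because Wolff's inequality may degenerate to equality in the limit process; the argument must combine the algebraic commutation of $g$ with $f$, the strict Schwarz-Pick contractivity valid for non-automorphisms, and a careful analysis of how an interior fixed point of $g$ propagates under $f$ to yield an interior fixed point of $f$.
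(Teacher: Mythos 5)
The paper does not actually prove this statement: it is quoted as the classical Denjoy--Wolff theorem with citations to Denjoy and Wolff, so your argument can only be compared with the standard classical proof --- which is essentially the route you follow: the automorphism/non-automorphism split, Wolff's construction of $\xi$ from the approximate fixed points $z_r$ of $rf$, the resulting invariance of the horodisks at $\xi$, and a normal-families argument. That skeleton is correct, and the Wolff-lemma derivation as you describe it is fine.

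The gap is in the endgame, in two places. First, your reduction ``any boundary value of $g$ must be $\xi$, so it remains to rule out non-constant limits'' omits a case: a subsequential limit of $\{f^n\}$ could a priori be a constant $c$ lying in $\mathbb{D}$ (such a $g$ has no boundary values at all), and this must be excluded as well. Second, the exclusion of non-constant limits --- which you yourself flag as the hardest step --- is only asserted (``would force an interior fixed point of $f$''), and the hinted mechanism (``an interior fixed point of $g$ propagates under $f$'') is off target, since $g$ need not have a fixed point. Both missing cases are settled at once by the following standard argument, which does realize your claim: if some orbit has an interior accumulation point, say $f^{n_k}(x_0)\to c\in\mathbb{D}$ (this happens exactly when some limit function takes a value in $\mathbb{D}$, in particular when it is non-constant or an interior constant), then $a_n:=K_{\mathbb{D}}(f^n(x_0),f^{n+1}(x_0))$ is non-increasing, hence convergent; evaluating the limit along $n_k$ and along $n_k+1$ gives $K_{\mathbb{D}}(c,f(c))=K_{\mathbb{D}}(f(c),f^2(c))$, and if $c\neq f(c)$ this contradicts the strict Schwarz--Pick contraction valid for non-automorphisms, so $f(c)=c$, contradicting the no-fixed-point hypothesis. (Alternatively, your commutation idea can be made to work: setting $h=\lim f^{n_{k+1}-n_k}$ one gets $h\circ g=g$, hence $h=\mathrm{id}$ on $g(\mathbb{D})$ and so on $\mathbb{D}$, which forces equality in Schwarz--Pick and thus $f\in\mathrm{Aut}(\mathbb{D})$ --- a contradiction with non-automorphy rather than with the absence of fixed points.) Once interior-valued limits are excluded, every limit function is a boundary constant, hence $\equiv\xi$ by the horoball trapping, and normality upgrades this to locally uniform convergence, as you say.
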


The Denjoy-Wolff theorem has been extended in the past by many authors in different situations (see, {\sl e.g.}, \cite{Ababook, AbRa, K2001} and references therein for a detailed account).  

The Kobayashi distance on $\mathbb{D}$ coincides, up to a constant, with the standard Poincar{\'e} distance and the metric space $(\mathbb{D}, K_{\mathbb{D}})$ is Gromov hyperbolic. Further,  any holomorphic map $f: \mathbb{D} \rightarrow \mathbb{D}$ is 1-Lipschitz relative to the Kobayashi distance. 

Karlsson proved the following abstract version of the Wolff-Denjoy theorem for general Gromov hyperbolic metric spaces (see Section~\ref{Sec:2} for details and definitions related to the Gromov boundary):

\begin{theorem*}[Karlsson, Prop. 5.1 in \cite{K2001}]\label{thm:gromov_wd} Suppose $(X,d)$ is a proper geodesic Gromov hyperbolic metric space and denote by $\partial_G X$ its Gromov boundary. If $f: X \rightarrow X$ is 1-Lipschitz, then either:
\begin{enumerate}
\item for every $p \in X$, the orbit $\{ f^n(x) : n \in \mathbb{N}\}$ is bounded in $(X,d)$, or
\item there exists a unique $\xi \in \partial_G X$ so that for all $x \in X$,
\begin{align*}
\lim_{n \rightarrow \infty} f^n(x) = \xi,
\end{align*}
in the Gromov compactification. 
\end{enumerate}
\end{theorem*}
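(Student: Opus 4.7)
The plan is a dichotomy. If $\{f^n(p_0)\}$ is bounded for some $p_0 \in X$, then the $1$-Lipschitz inequality $d(f^n(p), f^n(p_0)) \le d(p, p_0)$ shows that every orbit is bounded, placing us in case (1). I therefore assume from now on that every orbit is unbounded, and I fix a basepoint $p$. It will suffice to prove that $\{f^n(p)\}$ converges to some $\xi \in \partial_G X$ in the Gromov compactification: for any other $x \in X$, the sequences $\{f^n(x)\}$ and $\{f^n(p)\}$ stay within Lipschitz distance $d(x,p)$ of each other, which forces them to share the same limit in $\partial_G X$ and so gives both the statement for arbitrary $x$ and the independence of the limit from the basepoint.

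To prove that $\{f^n(p)\}$ converges, I plan to verify the standard Gromov criterion that $(f^n(p) \,|\, f^m(p))_p \to \infty$ as $\min(n,m) \to \infty$. Setting $a_k := d(p, f^k(p))$, the $1$-Lipschitz property gives $d(f^n(p), f^m(p)) \le a_{|m-n|}$, whence
\begin{equation*}
(f^n(p) \,|\, f^m(p))_p \;\ge\; \tfrac{1}{2}\bigl(a_n + a_m - a_{|m-n|}\bigr).
\end{equation*}
Subadditivity of $(a_k)$ yields $\ell := \lim_k a_k/k = \inf_k a_k/k$. In the ``hyperbolic'' case $\ell > 0$, one has $a_k = \ell k + o(k)$, and a short case analysis on the relative sizes of $n,m$ shows that the right-hand side tends to infinity. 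Combined with stability of quasi-geodesic sequences in $\delta$-hyperbolic spaces, this gives convergence $f^n(p) \to \xi$ for a unique $\xi \in \partial_G X$.

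The main obstacle is the ``parabolic'' case $\ell = 0$, where $a_k$ may in principle fail to tend to infinity along a subsequence and the Gromov product estimate above is not strong enough on its own. Here my plan is to exploit properness of $X$ together with $\delta$-hyperbolicity through the Busemann-type functions $b_n(x) := d(x, f^n(p)) - a_n$, which are uniformly $1$-Lipschitz with $b_n(p) = 0$. By Arzela--Ascoli one extracts a locally uniformly convergent subsequence $b_{n_k} \to h$, and in a proper geodesic Gromov hyperbolic space any such horofunction is centered at a well-defined boundary point $\xi \in \partial_G X$ whose horoballs (the sublevel sets of $h$) exhaust a neighborhood of $\xi$ in the Gromov compactification. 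The $1$-Lipschitz property of $f$ translates into a controlled monotonicity of $h$ along the orbit, forcing $f^n(p)$ to enter every horoball based at $\xi$ and hence to converge to $\xi$. Uniqueness of $\xi$ follows because any two subsequential horofunction limits would prescribe the same asymptotic behavior of $\{f^n(p)\}$ and therefore identify the same boundary point.
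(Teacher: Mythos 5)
First, note that the paper does not prove this statement at all: it is quoted verbatim as Proposition 5.1 of Karlsson's paper \cite{K2001}, so the only thing to assess is your argument on its own merits, and it has two genuine gaps, one in each of your cases. In the case $\ell>0$, your lower bound for the Gromov product is $\tfrac12\bigl(a_n+a_m-a_{|m-n|}\bigr)$ with $a_k=\ell k+o(k)$, and you assert that "a short case analysis on the relative sizes of $n,m$" makes this tend to infinity. Writing $a_k=\ell k+e_k$ with $e_k\geq 0$ and $e_k=o(k)$, the quantity equals $\ell n+\tfrac12\bigl(e_n+e_m-e_{m-n}\bigr)$ for $n\leq m$, and the troublesome term $e_{m-n}$ is only $o(m-n)$, not $o(n)$. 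When $m\gg n$ (say $m-n$ of order $n^2$ or worse), nothing you have stated prevents $e_{m-n}$ from being much larger than $2\ell n$, so the claimed divergence does not follow from $a_k=\ell k+o(k)$, and subadditivity of $(a_k)$ by itself does not close this case either (subadditivity only constrains rises of $e$, not drops). The appeal to "stability of quasi-geodesic sequences" does not help, because you have not shown the orbit is a quasi-geodesic: $\ell>0$ gives $d(f^np,f^mp)\leq a_1|m-n|$ but no lower bound of the form $c|m-n|-C$.

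The case $\ell=0$ is where the real content of Karlsson's proposition lies, and there your plan is missing the key step. From $b_{n_k}\to h$ the standard estimate only gives $h(f^jp)\leq\liminf_k\bigl(a_{n_k-j}-a_{n_k}\bigr)$, and when $\ell=0$ (with $a_k$ possibly non-monotone and not even tending to infinity) there is no reason this tends to $-\infty$; choosing the $n_k$ to be, say, record times only yields $h(f^jp)\leq 0$, i.e.\ the orbit stays in one fixed sublevel set of $h$, which is a large "half-space" and does not force convergence to the boundary point attached to $h$. So the asserted "controlled monotonicity of $h$ along the orbit, forcing $f^n(p)$ to enter every horoball" is exactly the unproved crux, and it is also where one must rule out the a priori possibility of an unbounded orbit with a bounded subsequence (note too that for $h$ to be "centered at a boundary point" you must first pass to a subsequence with $a_{n_k}\to\infty$, which you do not say). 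Karlsson's actual argument does not run through the rate decomposition $a_k=\ell k+o(k)$ at all; it exploits the 1-Lipschitz property through a careful selection of special times (and Busemann-function estimates), which is the missing idea here. As written, neither of your two cases is complete, so the proposal does not constitute a proof.
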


Karlsson's Theorem 
together with Balogh and Bonk's result~\cite{BaBo00} proves in particular a Denjoy-Wolff theorem for bounded $C^2$-smooth strongly pseudoconvex domains in $\C^d$. Such a result was proved directly by Abate in \cite{Aba2}: if $D$ is a $C^2$-smooth strongly pseudoconvex domain and $f:D\to D$ is holomorphic, then either $\{f^n(z)\}$ is relatively compact in $D$ for every $z\in D$, or there exists a unique  point in $\partial D$ such that every orbit of $f$ converges to such a point. 
Huang in \cite{Hu}, under the assumption of $C^3$ boundary smoothness, proved later that if $D$ is a topological contractible bounded strongly pseudoconvex domain, then $f$ has a fixed point in $D$ if and only if there is a point $z \in D$ such that the orbit $\{f^n(z)\}$ is relatively compact in $D$. The $C^2$-smooth boundary case still remains open, and it is shown in \cite{AbHe} that Huang's result does not hold in general as soon as strict pseudoconvexity fails at just one boundary point. 

On the other hand, if $D\subset \C^d$ is a hyperbolic convex domain and $f:D\to D$ is holomorphic, then $f$ has no fixed points in $D$ if and only if every orbit of $\{f^n\}$ is compactly divergent (see \cite{Aba0, Aba1, KS, BS2009}). Therefore, as a direct corollary to Karlsson's result and Theorem~\ref{ext-thm} we have the following:

\begin{corollary}\label{cor:WD_convex}
Let $D \subset \mathbb{C}^d$ be a $\mathbb C$-proper convex domain such that $(D,K_D)$ is  Gromov hyperbolic. If $f: D \rightarrow D$ is holomorphic, then either: 
\begin{enumerate}
\item $f$ has a fixed point in $D$; or
\item there exists a point $\xi \in \overline{D}^{\star} \setminus D$, called the {\sl Denjoy-Wolff point} of $f$, so that
\begin{equation*}
\lim_{n \rightarrow \infty} f^n(x) = \xi
\end{equation*}
 for any $x \in D$, this convergence being uniform on compact subsets of $D$. In particular, either $\xi\in \partial D$ and $\lim_{n \rightarrow \infty} f^n(x) = \xi$ or $\lim_{n\to +\infty}\|f^n(x)\|=\infty$ for all $x\in D$. 
\end{enumerate}
\end{corollary}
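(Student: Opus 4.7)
The plan is to combine three inputs that are all already available in the excerpt: Karlsson's theorem on iterates of $1$-Lipschitz maps of proper geodesic Gromov hyperbolic spaces, the extension Theorem~\ref{ext-thm}, and the classical fixed-point dichotomy for holomorphic self-maps of hyperbolic convex domains recalled just above the statement. First I would verify the metric hypotheses. Since $D$ is $\mathbb C$-proper convex, Barth's theorem (Remark after Theorem~\ref{ext-thm}) gives that $K_D$ is Cauchy complete and $D$ is Kobayashi hyperbolic. The Kobayashi distance induces the Euclidean topology on $D$, and on a convex domain it is a length metric (with real geodesic segments realized inside complex geodesic disks), so Hopf--Rinow makes $(D,K_D)$ proper and geodesic. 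Any holomorphic $f:D\to D$ is $1$-Lipschitz for $K_D$ by the contraction property of the Kobayashi metric. Combined with the standing Gromov hyperbolicity assumption, this puts $f$ exactly in the hypotheses of Karlsson's theorem.

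Applying Karlsson's theorem yields a dichotomy: either every orbit $\{f^n(x)\}$ is $K_D$-bounded in $D$, or there exists a unique $\xi \in \partial_G D$ with $f^n(x) \to \xi$ in $\overline{D}^G$ for every $x\in D$. In the first case, properness of $(D,K_D)$ promotes ``bounded'' to ``relatively compact in $D$'', so no orbit of $f$ is compactly divergent; by the dichotomy for hyperbolic convex domains cited just before the statement, this forces $f$ to admit a fixed point in $D$, giving conclusion~(1). In the second case, Theorem~\ref{ext-thm} furnishes a homeomorphism $\overline{\id}:\overline{D}^\star \to \overline{D}^G$, so $\xi$ corresponds to a unique $\xi^\star \in \overline{D}^\star \setminus D$ and $f^n(x) \to \xi^\star$ in $\overline{D}^\star$ pointwise in $x$.

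To upgrade this to locally uniform convergence I would fix a compact $K\subset D$, a base point $x_0\in K$, and set $R=\sup_{x\in K}K_D(x_0,x)<\infty$. Since $f$ is $1$-Lipschitz, $K_D(f^n(x),f^n(x_0))\le R$ for all $x\in K$ and all $n$, so for any fixed basepoint $o\in D$ the Gromov product satisfies
\begin{equation*}
\bigl(f^n(x)\,\big|\,f^n(x_0)\bigr)_{o} \;\ge\; \tfrac{1}{2}\bigl(K_D(o,f^n(x_0)) + K_D(o,f^n(x)) - R\bigr) \;\longrightarrow\; \infty
\end{equation*}
uniformly in $x\in K$. The standard criterion for convergence in the Gromov boundary (if $(y_n \mid z_n)_o \to \infty$ and $y_n \to \xi$, then $z_n \to \xi$) then gives $f^n(x)\to\xi$ uniformly in $x\in K$ inside $\overline{D}^G$, and uniform convergence to $\xi^\star$ in $\overline{D}^\star$ follows from continuity of $\overline{\id}^{-1}$ on the compact space $\overline{D}^G$. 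The ``in particular'' clause is immediate from the structure of the end compactification: $\overline{D}^\star\setminus\overline{D}$ consists of ideal ends, a neighborhood basis at each of which is given by complements of compact subsets of $\overline{D}$, so either $\xi^\star\in\partial D$ and the convergence is in the Euclidean topology, or $\xi^\star$ is an end at infinity, forcing $\|f^n(x)\|\to\infty$.

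The main obstacle, and the only step that is not merely a citation, is the passage from Karlsson's pointwise Gromov-boundary convergence to locally uniform convergence in $\overline{D}^\star$: Theorem~\ref{ext-thm} is purely a statement about compactifications, not a quantitative comparison. The $1$-Lipschitz property of $f$ together with the Gromov-product estimate above resolves this, and then the homeomorphism $\overline{\id}$ transports uniformity back to $\overline{D}^\star$.
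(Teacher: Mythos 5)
Your proof follows the paper's intended route exactly: Karlsson's theorem combined with the fixed-point/compact-divergence dichotomy for hyperbolic convex domains gives alternative (1), and Theorem~\ref{ext-thm} transports the Gromov-boundary convergence to $\overline{D}^{\star}$, with the ``in particular'' clause read off from the end compactification. The paper states this as a direct corollary without spelling out the locally uniform convergence; your Gromov-product argument based on the $1$-Lipschitz property correctly supplies that detail, and the rest matches the paper's (implicit) argument step for step.
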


In case $D$ is a bounded strictly convex domain, a Denjoy-Wolff theorem of the previous type has been proved by Budzy\'nska \cite{Bu}, while, for bounded $C^2$-smooth strictly $\C$-linearly convex domains, the result is due to Abate and Raissy \cite{AbRa}.

To the best our knowledge there are no prior results of Denjoy-Wolff type which hold for general classes of unbounded domains. 

\subsection{Commuting holomorphic maps} As mentioned above, the proof of Theorem~\ref{ext-thm} relies in an essential way on the study of commuting holomorphic maps in domains in $\mathbb C^d$. 

In 1973, Behan \cite{Be73} proved that two commuting holomorphic self-maps of the unit disc $\D$ with no fixed points in $\D$ either have to share the same Denjoy-Wolff point on $\partial \D$ or are hyperbolic automorphisms of $\D$. In~\cite{Br99, Br98} the first named author generalized Behan's result to the unit ball and to smooth bounded strongly convex domains, proving that if two commuting holomorphic maps  have distinct Denjoy-Wolff points, then the restrictions of the maps to the unique complex geodesic joining the two points are automorphisms of such a complex geodesic. The aim of the following theorem is to generalize Behan's result to commuting 1-Lipschitz maps in Gromov hyperbolic spaces:

\begin{theorem}\label{commut-thm}
Let $(X,d)$ be a proper geodesic Gromov hyperbolic metric space. Let $f,g: X \rightarrow X$ be commuting 1-Lipschitz maps. Suppose  there exist $\xi_f \neq \xi_g \in \partial_G X$ and $x_0 \in X$ such that
\begin{equation}\label{com-eq}
\lim_{n\to\infty}f^{n}(x_0) =\xi_f \ \ {\rm and } \ \ \lim_{n\to\infty}g^{n}(x_0) =\xi_g,
\end{equation}
in the Gromov compactification. Then there exist a totally geodesic closed subset $M \subset X$ and a $1$-Lipschitz map $\rho: X \rightarrow M$ such that:

\begin{enumerate}
\item $\rho \circ \rho = \rho$,
\item $f(M)=g(M)=M$ and $f|_{M}$ and $g|_{M}$ are isometries of $(M,d|_M)$.
\end{enumerate}
\end{theorem}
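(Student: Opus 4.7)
The plan is to prove the theorem in three phases: first, show that $f$ and $g$ asymptotically fix each other's attractive boundary points; second, produce a bi-infinite geodesic joining $\xi_g$ to $\xi_f$; third, extract the $f,g$-invariant set $M$ together with the 1-Lipschitz retraction $\rho$.

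For the first phase, the key commutation trick is: since $f \circ g^n = g^n \circ f$ and $g^n$ is 1-Lipschitz, for every $y \in X$ and every $n \in \N$,
\[
d(f(g^n(y)), g^n(y)) = d(g^n(f(y)), g^n(y)) \leq d(f(y), y).
\]
Hence $f$ moves each point of a $g$-orbit by a uniformly bounded amount. By Karlsson's theorem, $g^n(x_0) \to \xi_g$ in the Gromov compactification, and this uniform bound forces $f(g^n(x_0)) \to \xi_g$ as well. Symmetrically, $g(f^n(x_0)) \to \xi_f$. This is the precise sense in which $f$ fixes $\xi_g$ and $g$ fixes $\xi_f$ asymptotically.

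For the second phase, let $\gamma_n$ be a geodesic segment from $g^n(x_0)$ to $f^n(x_0)$, reparametrized so that $\gamma_n(0)$ is a point on the segment closest to $x_0$. Properness of $(X,d)$ together with an Arzelà--Ascoli argument yields a subsequential limit $\gamma : \R \to X$ that is a bi-infinite geodesic with $\gamma(-\infty) = \xi_g$ and $\gamma(+\infty) = \xi_f$. Using the commutation identities of Phase 1 together with $\delta$-hyperbolicity, I would then show that $f \circ \gamma$ and $g \circ \gamma$ are $(1,C)$-quasi-geodesics joining the same pair of boundary points, so by stability of quasi-geodesics in $\delta$-hyperbolic spaces they stay within uniformly bounded Hausdorff distance of $\gamma$.

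For the third phase, this bounded closeness must be promoted to exact invariance, presumably via a compactness/fixed-point argument applied to the space of bi-infinite geodesics from $\xi_g$ to $\xi_f$ under the semigroup $\langle f, g \rangle$: a natural candidate for $M$ is a minimal $f,g$-invariant closed totally geodesic subset of the quasi-convex hull of $\{\xi_f, \xi_g\}$, obtained as a subsequential limit of translates of $\gamma$. The hardest part, which I expect to be the technical heart of the proof, is constructing the genuine 1-Lipschitz retraction $\rho : X \to M$. In a general Gromov hyperbolic space the nearest-point projection onto a geodesic is only \emph{coarsely} 1-Lipschitz, so $\rho$ cannot be the metric projection; it must instead be distilled from the dynamics themselves, for instance as a pointwise subsequential limit of suitable compositions of iterates of $f$ and $g$ (so that the 1-Lipschitz property is inherited automatically from each factor). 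Verifying that such a limit exists, that its image is genuinely totally geodesic rather than merely quasi-convex, and that it restricts to the identity on $M$, is where I expect the main difficulty to lie.
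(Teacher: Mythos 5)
There is a genuine gap, on two counts. First, your Phase~2 claim that $f\circ\gamma$ and $g\circ\gamma$ are $(1,C)$-quasi-geodesics is not justified: a $1$-Lipschitz map may contract distances drastically, and the commutation estimate of Phase~1 only bounds $d(f(g^n(x_0)),g^n(x_0))$ along the orbit of $x_0$, not $d(f(\gamma(t)),\gamma(t))$ along the limit geodesic -- an orbit and a geodesic ray converging to the same point of $\partial_G X$ need not remain within bounded distance of each other -- so no lower bound of the form $d(f(\gamma(s)),f(\gamma(t)))\geq |s-t|-C$ follows from what you have established. Second, and more importantly, you explicitly leave unconstructed the retraction $\rho$, the set $M$, and the proof that $f|_M,g|_M$ are isometries, which is precisely the content of the theorem; saying that $\rho$ should be ``a pointwise subsequential limit of suitable compositions of iterates'' names the right object but omits the one ingredient that makes such limits non-degenerate. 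Since $f^n\to\xi_f$ and $g^n\to\xi_g$ locally uniformly, an arbitrary limit of mixed iterates escapes to the boundary; one must first show that the compositions can be chosen to return to a compact set.

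That returning statement is the actual key step in the paper (Proposition~\ref{prop:commuting}): because $\xi_f\neq\xi_g$, for every $m$ there is $n(m)$ with $d(x_0,f^m g^{n(m)}(x_0))$ bounded independently of $m$. Its proof concatenates geodesic segments from $f^mg^n(x_0)$ to $f^m(x_0)$ and to $g^n(x_0)$ into a $(1,2r)$-quasi-geodesic, using the visibility property (Theorem~\ref{thm:visible}) to get the additive estimate $d(f^m(x_0),g^n(x_0))\geq d(f^m(x_0),x_0)+d(x_0,g^n(x_0))-2r$, and then applies the shadowing lemma (Theorem~\ref{thm:shadow_lemma}); this is where both hypotheses $\xi_f\neq\xi_g$ and Gromov hyperbolicity enter. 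With that in hand the conclusion follows by a purely dynamical limiting argument: Arzel\`a--Ascoli gives $h=\lim f^{m_k}\circ g^{n_k}$, limits along the difference sequences $m_{k+1}-m_k$, $n_{k+1}-n_k$ produce an idempotent $1$-Lipschitz map $\rho$ with $M=\rho(X)$, and a further limit $\psi$ with $\psi\circ(f\circ g)=\id$ on $M$ shows that $f\circ g$, hence each of $f$ and $g$, is a surjective isometry of $M$. No bi-infinite geodesic, quasi-geodesic image, or minimal-invariant-set argument is needed, and several of those steps in your outline (existence of a minimal invariant totally geodesic subset, total geodesicity of the limit set) would themselves require justification. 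As it stands, your proposal correctly locates the difficulty but does not resolve it.
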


\begin{remark} Since $(X,d)$ is a proper geodesic Gromov hyperbolic metric space, in view of Karlsson's Theorem, the existence of a point $x_0 \in X$ such that $f^{n}(x_0)$ converges to $\xi_f$ is equivalent to the convergence of $f^{n}(x)$ to $\xi_f$, for all $x \in X$.
\end{remark}

As an application of Theorem \ref{commut-thm} and Balogh and Bonk's theorem, we have the following generalization of Behan's result:

\begin{corollary}\label{str-cor}
Let $D$ be a bounded, $C^2$-smooth strongly pseudoconvex domain in $\C^d$, and let $f,g$ be commuting holomorphic maps from $D$ to $D$. Suppose that there exist $p_f \neq p_g \in \partial D$ and a point $z_0 \in D$ such that
$$
\lim_{n \rightarrow \infty}f^{n}(z_0) = p_f \ \ {\rm and} \ \ \lim_{n \rightarrow \infty}g^{n}(z_0) = p_g.
$$
Then there exists a complex geodesic $\Delta$ for $D$, which is a holomorphic retract of $D$ such that $p_f, p_g\in \partial \Delta$, $f(\Delta)=\Delta$, $g(\Delta)=\Delta$ and $f|_\Delta, g|_\Delta$ are (hyperbolic) automorphisms of $\Delta$. In particular, $p_f$ ({\sl resp.} $p_g$) is a boundary fixed point in the sense of admissible limits in $D$ for $g$ ({\sl resp.} for $f$).
\end{corollary}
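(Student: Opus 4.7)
The plan is to translate the hypotheses into the framework of Gromov hyperbolic geometry, apply Theorem~\ref{commut-thm}, and then convert the resulting purely metric conclusion into the claimed complex-analytic statement by invoking the rigidity of complex geodesics in strongly pseudoconvex domains. First, by the theorem of Balogh and Bonk~\cite{BaBo00}, the metric space $(D,K_D)$ is proper, geodesic and Gromov hyperbolic, and its Gromov compactification is identified, through the identity map on $D$, with the Euclidean closure $\overline{D}$. Under this identification the Euclidean limits $f^{n}(z_0)\to p_f$ and $g^{n}(z_0)\to p_g$ become convergence in the Gromov compactification towards two distinct points of $\partial_{G}D$. Since holomorphic self-maps of $D$ are $1$-Lipschitz for $K_D$, the hypotheses of Theorem~\ref{commut-thm} are met, and I would apply it to obtain a closed totally geodesic subset $M\subset D$ together with a $1$-Lipschitz retraction $\rho:D\to M$ such that $f(M)=g(M)=M$ and $f|_M, g|_M$ are isometries of $(M,K_D|_M)$.

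Next I would produce a bi-infinite $K_D$-geodesic in $M$ joining $p_g$ to $p_f$. Since $M$ is closed and totally geodesic inside a proper geodesic Gromov hyperbolic space, $(M,K_D|_M)$ is itself proper, geodesic and Gromov hyperbolic, and its Gromov boundary embeds in $\partial_G D$. Applying Karlsson's theorem to the isometries $f|_M,g|_M$ shows that $p_f,p_g\in\partial_G M$, so there exists a bi-infinite $K_D$-geodesic $\gamma:\R\to M\subset D$ with $\gamma(t)\to p_f$ and $\gamma(-t)\to p_g$ as $t\to+\infty$. I would then invoke the theory of complex geodesics in bounded $C^{2}$-smooth strongly pseudoconvex domains (cf.~\cite{Aba0}) to promote $\gamma$ to an honest complex geodesic $\Delta\subset D$ with $p_f,p_g\in\partial\Delta$ whose real trace is $\gamma$, up to reparametrization, and which is a holomorphic retract of $D$, say by a retraction $\pi:D\to\Delta$.

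Finally, the invariance $f(\Delta)=g(\Delta)=\Delta$ and the hyperbolicity of the restrictions would follow via a Behan--Bracci style argument as in~\cite{Be73,Br98,Br99}: the commuting isometries $f|_M,g|_M$ share the pair $\{p_f,p_g\}\subset\partial_G M$ as attractors of their forward and backward orbits respectively, hence they act on $\gamma$ as translations with opposite orientations; the rigidity of complex geodesics joining distinct boundary points of a strongly pseudoconvex domain then forces $f(\Delta)=\Delta=g(\Delta)$, and the induced maps on $\Delta\simeq\D$ are biholomorphic self-maps of the disc sharing two boundary fixed points, i.e., hyperbolic M\"obius transformations. The admissibility statement at $p_f,p_g$ is then a direct consequence of the Julia--Wolff--Carath\'eodory theorem for $f|_\Delta,g|_\Delta$ transferred to $D$ via $\pi$. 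The principal obstacle in this scheme is the second step: Theorem~\ref{commut-thm} supplies only the metric retract $(M,\rho)$, and upgrading this to an honest complex geodesic with a holomorphic retraction onto it is where the $C^{2}$-smooth strong pseudoconvexity of $\partial D$ is used in an essential way.
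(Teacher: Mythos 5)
You correctly reduce to Theorem~\ref{commut-thm} via Balogh--Bonk, but the step you yourself flag as ``the principal obstacle'' is a genuine gap, and the way you propose to bridge it does not work. You treat the output of Theorem~\ref{commut-thm} as purely metric data and then try to promote a real Kobayashi geodesic line $\gamma\subset M$ joining $p_g$ to $p_f$ to a complex geodesic that is a holomorphic retract, invoking ``the theory of complex geodesics in bounded $C^2$-smooth strongly pseudoconvex domains'' and the ``rigidity of complex geodesics joining distinct boundary points''. No such theory is available in this generality: existence and uniqueness of complex geodesics through prescribed boundary points, and Lempert-type holomorphic retractions onto them, are phenomena tied to (strong) convexity, not to strong pseudoconvexity, and a real $K_D$-geodesic in a strongly pseudoconvex domain need not be the trace of any holomorphic isometric disc. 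So both the construction of $\Delta$ and the rigidity argument you use to get $f(\Delta)=g(\Delta)=\Delta$ are unsupported as stated.

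The paper closes this gap differently, and the key point is that the holomorphic structure comes for free from the proof of Theorem~\ref{commut-thm}: there $\rho$ is obtained as a locally uniform limit of the maps $f^{p_k}\circ g^{p_k'}$, which in the present setting are holomorphic, so $\rho$ is holomorphic and $M=\rho(D)$ is a holomorphic retract of $D$ --- no extra input from strong pseudoconvexity is needed at this stage. Strong pseudoconvexity enters afterwards, through a rescaling argument along the orbit $f^n(z_0)\to p_f$ (exposing points and squeezing functions, \cite{DGZ2016,DFW2014}): the maps $\sigma_n\circ f^n|_M$ are holomorphic Kobayashi isometries into domains converging to the ball, and a limit identifies $M$ biholomorphically with $\mathbb B^k\times\{0\}$. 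One then transports $f|_M,g|_M$ to commuting fixed-point-free automorphisms of $\mathbb B^k$, rules out a common Denjoy--Wolff point using Proposition~\ref{prop:commuting}, and applies the known ball case \cite{Br99} to get hyperbolicity and the invariant complex geodesic $\tilde\Delta$, which pulls back to $\Delta=\psi^{-1}(\tilde\Delta)$; this is a holomorphic retract of $D$ because retractions compose, and the admissibility statement follows from \cite[Theorem 2.7]{BrC} rather than from a Julia--Wolff--Carath\'eodory argument transported by a retraction you have not constructed. Your metric preliminaries (Karlsson's theorem applied on $M$, the geodesic line joining $p_f$ and $p_g$) are fine but are bypassed entirely by this route.
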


By \cite[Theorem 1]{Hu}, if  $D$ is a bounded $C^3$-smooth strongly pseudoconvex domain in $\C^d$, which is topologically contractible, and $f$ is a holomorphic self-map of $D$, then there exists a unique point $p_f\in \partial D$ such that $\{f^n\}$ converges uniformly on compacta of $D$ to the constant map $\zeta\mapsto p_f$ if and only if $f$ has no fixed points in $D$.
Therefore, the previous corollary, in case $\partial D$ is $C^3$-smooth and $D$ is topologically contractible, says that if $f,g$ are commuting holomorphic self-maps of $D$ with no fixed points in $D$, then either $f, g$ have the same Denjoy-Wolff point or $f$, $g$ are automorphisms of a  complex geodesic for $D$ joining the Denjoy-Wolff points of $f$ and~$g$.

On the other hand,  as a direct  consequence of  Corollary \ref{cor:WD_convex} and  Theorem \ref{commut-thm}, we have

\begin{corollary}\label{conv-cor}
Let $D$ be a $\mathbb C$-proper convex domain in $\C^d$ such that $(D, K_D)$ is Gromov hyperbolic and let $f,g$ be commuting holomorphic maps from $D$ to $D$. Suppose that $f$ and $g$ have no fixed points in $D$ and let $p_f\in \overline{D}^\star\setminus D$ ({\sl resp.} $p_g\in \overline{D}^\star\setminus D$) be the Denjoy-Wolff point of $f$ ({\sl resp.} of $g$). Then, either $p_f=p_g$ or  there exists a holomorphic retract $M$ of $D$, of complex dimension $1 \leq k \leq  d$,  such that $p_f, p_g \in \overline{M}^\star\setminus M$, $f(M)=g(M)=M$ and $f_{|M},\ g_{|M} \in {\rm Aut}(M)$. 
\end{corollary}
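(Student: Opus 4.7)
The plan is to combine Corollary~\ref{cor:WD_convex} with Theorem~\ref{commut-thm}, the main conceptual step being to upgrade the abstract metric retract produced by the latter to a genuine holomorphic retract in the convex setting. The case $p_f=p_g$ is trivial, so assume $p_f\neq p_g$. By Corollary~\ref{cor:WD_convex}, the iterates $f^n(x)$ and $g^n(x)$ converge in $\overline{D}^{\star}$ to $p_f$ and $p_g$ respectively, for every $x\in D$. By Theorem~\ref{ext-thm}, the identity extends to a homeomorphism $\overline{D}^{\star}\to\overline{D}^{G}$, so these iterates in fact converge to distinct points $\xi_f\neq \xi_g$ in the Gromov boundary $\partial_G D$.

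Next I verify the hypotheses of Theorem~\ref{commut-thm} with $(X,d)=(D,K_D)$. Since $D$ is $\mathbb{C}$-proper and convex, Barth's theorem guarantees that $K_D$ is Cauchy complete; tautness of $D$ together with completeness makes $(D,K_D)$ a proper geodesic space, and it is Gromov hyperbolic by hypothesis. The maps $f,g$ commute and are $1$-Lipschitz for $K_D$ by the distance-decreasing property of the Kobayashi metric. Theorem~\ref{commut-thm} thus produces a totally geodesic closed subset $M\subset D$ and a $1$-Lipschitz retraction $\rho:D\to M$ with $\rho\circ\rho=\rho$, $f(M)=g(M)=M$, and $f|_M, g|_M$ isometries of $(M,K_D|_M)$.

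The main obstacle is to show that $M$ is a complex submanifold of $D$ which is a holomorphic retract; Theorem~\ref{commut-thm} by itself only gives a metric retract. Here the convex geometry of $D$ is crucial. By Lempert-type theory, any two points of $D$ are joined by a complex geodesic $\varphi:\mathbb{D}\to D$, i.e.\ a holomorphic isometric embedding. Since $M$ is totally geodesic for $K_D$, the real geodesic joining any two points of $M$ lies in $M$; using the uniqueness and the holomorphic structure of complex geodesics in convex domains, one then argues that the full image of each such $\varphi$ stays in $M$, forcing $M$ to be a complex submanifold. The $1$-Lipschitz retraction $\rho$ can then be replaced by a genuine holomorphic retraction onto $M$, either via an averaging/subsequential-limit argument on suitable compositions of iterates of $f$ and $g$ (which form a normal family by tautness of $D$), or via projection along the foliation by complex geodesics transverse to $M$. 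This is the step that requires the convex hypothesis and that is not automatic in an abstract Gromov hyperbolic setting. It yields $1\leq \dim_\mathbb{C} M\leq d$, the lower bound coming from $p_f\neq p_g$ both lying in the closure of $M$, and the upper bound being trivial.

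Once $M$ is a holomorphic retract we have $K_M=K_D|_M$, so $f|_M$ and $g|_M$ are holomorphic isometries of $(M,K_M)$ and therefore belong to $\mathrm{Aut}(M)$. Finally, picking any $x_0\in M$, the orbits $f^n(x_0),g^n(x_0)\in M$ converge in $\overline{D}^{\star}$ to $p_f,p_g$ respectively, so $p_f,p_g\in \overline{M}^{\star}\setminus M$, concluding the proof.
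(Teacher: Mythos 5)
Your overall assembly is the same as the paper's: the corollary is deduced by combining Corollary~\ref{cor:WD_convex} (to get Denjoy--Wolff points), Theorem~\ref{ext-thm} (to translate $p_f\neq p_g$ in $\overline{D}^\star$ into distinct Gromov boundary points, so that the hypothesis \eqref{com-eq} of Theorem~\ref{commut-thm} holds), and Theorem~\ref{commut-thm}. However, the step you single out as ``the main obstacle'' --- upgrading $M$ from a metric retract to a holomorphic retract --- is exactly where your argument has a genuine gap. The complex-geodesic route you sketch does not work as stated: uniqueness of complex geodesics is not available for general convex domains, and, more importantly, total geodesy of $M$ only tells you that \emph{real} $K_D$-geodesics between points of $M$ stay in $M$; it does not force the whole holomorphic disc of a complex geodesic through two points of $M$ to lie in $M$, so you cannot conclude from this that $M$ is a complex submanifold, let alone produce a holomorphic retraction ``by projection along a foliation by complex geodesics''.

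The resolution is much simpler and requires no new convex-geometric input: in the proof of Theorem~\ref{commut-thm} the retraction $\rho$ is constructed as a locally uniform limit of the maps $f^{p_k}\circ g^{p_k'}$. When $f$ and $g$ are holomorphic these maps are holomorphic, hence $\rho$ is holomorphic, $\rho\circ\rho=\rho$, and $M=\rho(D)$ is a holomorphic retract of $D$ (in particular a connected complex submanifold); this is precisely the observation the paper makes explicitly in the proof of Corollary~\ref{str-cor}, and it is why the paper can state Corollary~\ref{conv-cor} as a direct consequence. Your ``averaging/subsequential-limit argument on compositions of iterates'' gestures at this, but you present it as an optional alternative and never note that the retraction furnished by Theorem~\ref{commut-thm} is already of this form, so no replacement or extra normal-families argument is needed. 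With that fixed, the rest goes through: $f|_M,g|_M$ are surjective (by Theorem~\ref{commut-thm}) injective holomorphic self-maps of $M$, hence in ${\rm Aut}(M)$; $k\geq 1$ because $M$ reduced to a point would be a common fixed point of $f$ and $g$, contradicting the hypothesis; and $p_f,p_g\in\overline{M}^\star\setminus M$ since orbits of points of $M$ stay in $M$ and converge to $p_f$, $p_g$.
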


\subsection{Outline of the paper} The paper is organized as follows. In Section \ref{Sec:2} we recall important properties about the Gromov hyperbolic metric spaces, in our context. In Section~\ref{Sec:4}, we study dynamical properties of commuting 1-Lipschitz maps in Gromov hyperbolic spaces.  In Sections~\ref{Sec:5} and~\ref{Sec:6}, we study, for an unbounded, convex, Gromov hyperbolic domain, the correspondence between its Gromov compactification and the end compactification of its Euclidean closure. We prove Theorem~\ref{ext-thm} in Section~\ref{Sec:6}. In Section~\ref{Sec:7}  we prove Corollary~\ref{eucl-thm}, Theorem~\ref{commut-thm} and Corollary~\ref{str-cor}. Finally, in Section \ref{Sec:example} we provide some examples showing that our hypotheses are optimal.

\medskip

{\sl Acknowledgments.} The authors thank the referees for their very useful comments which improved the original version.

\section{The Kobayashi metric} 

 In this expository section we recall the definition of the Kobayashi metric. Given a domain $\Omega \subset \mathbb{C}^d$ the \emph{(infinitesimal) Kobayashi metric} is the pseudo-Finsler metric
\begin{align*}
k_{\Omega}(x;v) = \inf \left\{ \abs{\xi} : f \in \Hol(\mathbb{D}, \Omega), \ f(0) = x, \ d(f)_0(\xi) = v \right\}.
\end{align*}
By a result of Royden~\cite[Proposition 3]{Roy1971} the Kobayashi metric is an upper semicontinuous function on $\Omega \times \mathbb{C}^d$. In particular, if $\sigma:[a,b] \rightarrow \Omega$ is an absolutely continuous curve (as a map $[a,b] \rightarrow \mathbb{C}^d$), then the function 
\begin{align*}
t \in [a,b] \rightarrow k_\Omega(\sigma(t); \sigma^\prime(t))
\end{align*}
is integrable and we can define the \emph{length of $\sigma$} to  be
\begin{align*}
\ell_\Omega(\sigma)= \int_a^b k_\Omega(\sigma(t); \sigma^\prime(t)) dt.
\end{align*}
One can then define the \emph{Kobayashi pseudo-distance} to be
\begin{equation*}
 K_\Omega(x,y) = \inf \left\{\ell_\Omega(\sigma) : \sigma\colon[a,b]
 \rightarrow \Omega \text{ is abs. cont., } \sigma(a)=x, \text{ and } \sigma(b)=y\right\}.
\end{equation*}
This definition is equivalent to the standard definition using analytic chains by a result of Venturini~\cite[Theorem 3.1]{Ven1989}.

When $\Omega$ is a bounded domain, $K_\Omega$ is a non-degenerate distance. For general domains there is no known characterization of when the Kobayashi distance is proper, but for convex domains we have the following result of Barth.

\begin{theorem}\cite{Bar80}\label{thm:barth}
Suppose $\Omega$ is a convex domain. Then the following are equivalent:
\begin{enumerate}
\item $\Omega$ is $\mathbb{C}$-proper, 
\item $K_\Omega$ is a non-degenerate distance on $\Omega$, 
\item $(\Omega, K_\Omega)$ is a proper metric space, 
\item $(\Omega, K_\Omega)$ is a proper geodesic metric space. 
\end{enumerate}
\end{theorem}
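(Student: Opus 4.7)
The plan is to prove the cycle $(4) \Rightarrow (3) \Rightarrow (2) \Rightarrow (1) \Rightarrow (4)$, with the main content concentrated in $(1) \Rightarrow (4)$. The first three implications are essentially formal: $(4) \Rightarrow (3)$ is immediate; $(3) \Rightarrow (2)$ holds because the definition of a proper metric space already encodes non-degeneracy of the distance; and $(2) \Rightarrow (1)$ is the contrapositive of the observation that if $\Omega$ contains a complex affine line $\{p + \zeta v : \zeta \in \mathbb C\}$, then the holomorphic parametrization $\zeta \mapsto p + \zeta v$ from $\mathbb C$ to $\Omega$ together with the contraction property of the Kobayashi pseudo-distance and $K_{\mathbb C} \equiv 0$ force $K_\Omega(p + \zeta_1 v, p + \zeta_2 v) = 0$ for every $\zeta_1, \zeta_2 \in \mathbb C$.

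For $(1) \Rightarrow (4)$ my main tool is a \emph{complex} supporting hyperplane theorem for convex open sets in $\mathbb C^d$: at every $q \in \partial \Omega$ there exists a $\mathbb C$-linear form $L$ on $\mathbb C^d$ such that the complex affine hyperplane $\{z : L(z) = L(q)\}$ is disjoint from $\Omega$. This follows by taking a real supporting hyperplane $\{z : \Re L(z) = c\}$ at $q$ via real Hahn--Banach and noting that, since $\Omega$ is open, $\Re L(z) < c$ holds strictly on $\Omega$, so $L(z) = L(q)$ cannot occur in $\Omega$. Consequently $L(\Omega) \subset \mathbb C \setminus \{L(q)\}$ is a convex proper open subset of $\mathbb C$, hence a complete hyperbolic Riemann surface, and the contraction property gives
\begin{equation*}
K_\Omega(x,y) \;\ge\; K_{L(\Omega)}\bigl(L(x), L(y)\bigr), \qquad x,y \in \Omega.
\end{equation*}
Non-degeneracy follows by choosing, for $x \neq y$ in $\Omega$, a supporting functional $L$ separating $x$ from $y$, which exists by applying $\mathbb C$-properness in the direction $y - x$. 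For properness, I take a $K_\Omega$-bounded sequence $\{p_n\}$, extract a subsequential limit $p_\infty$ in the Euclidean closure of $\Omega$ in $\mathbb C^d \cup \{\infty\}$, and rule out both $p_\infty \in \partial \Omega$ and $p_\infty = \infty$: in the first case the functional $L$ above at $p_\infty$ sends $L(p_n) \to L(p_\infty) \in \partial L(\Omega)$, which forces $K_{L(\Omega)}(L(p_0), L(p_n)) \to \infty$ by completeness of the planar target, contradicting $K_\Omega$-boundedness; in the second case one uses $\mathbb C$-properness of the recession cone of $\Omega$ to produce an analogous $L$ in whose image the sequence $\{L(p_n)\}$ escapes in $L(\Omega)$.

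Finally, the upgrade $(3) \Rightarrow (4)$ is a standard length-space argument: Venturini's theorem identifies $K_\Omega$ with the length distance of the Kobayashi Finsler metric $k_\Omega$, and Royden's upper-semicontinuity of $k_\Omega$ combined with Arzel\`a--Ascoli in the proper length space $(\Omega,K_\Omega)$ produces length-minimizing geodesics between any two points. The main obstacle I anticipate is the ``escape to infinity'' case in the properness step. For an unbounded convex $\mathbb C$-proper $\Omega$, one must turn an asymptotic direction of $\{p_n\}$ into a separating complex affine hyperplane; the cleanest route is via the recession cone of $\Omega$, first checking that $\mathbb C$-properness of $\Omega$ forces the recession cone to contain no complex line, and then applying the complex supporting hyperplane argument to the cone itself to supply the functional $L$ that detects the escape.
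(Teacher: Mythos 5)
Most of your plan is correct, and let me note at the outset that the paper does not actually prove this statement: it quotes it from Barth \cite{Bar80}, so the comparison here is with the standard argument rather than with an in-paper proof. Your implications $(4)\Rightarrow(3)\Rightarrow(2)\Rightarrow(1)$, the non-degeneracy half of $(1)\Rightarrow(4)$ via complexified supporting hyperplanes and the lower bound $K_\Omega(x,y)\ge K_{L(\Omega)}(L(x),L(y))$, the boundary-accumulation case of properness, and the Hopf--Rinow type upgrade from proper to proper geodesic are all fine.

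The genuine gap is in the escape-to-infinity case. A functional produced by ``applying the complex supporting hyperplane argument to the cone itself'' only satisfies $\Re L\le 0$ on the recession cone $C$ of $\overline{\Omega}$; this does \emph{not} imply that $\Re L$ is bounded above on $\Omega$, and if $L(\Omega)=\mathbb{C}$ then $K_{L(\Omega)}\equiv 0$ and your lower bound is vacuous. Concretely, take $\Omega=\{(z,w)\in\mathbb{C}^2:\ \Im w>(\Re z)^2\}$, which is convex and $\mathbb{C}$-proper, with recession cone $C=\{(c,e):\ \Re c=0,\ \Im e\ge 0\}$, and the escaping sequence $p_n=(in,i)$ with direction $v=(i,0)\in C$. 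The functional $L(z,w)=z$ supports $C$ in your sense ($\Re L\equiv 0$ on $C$) and has $L(v)=i\neq 0$, yet $L(\Omega)=\mathbb{C}$, so this admissible choice detects nothing; the functional you actually need here is something like $L(z,w)=z+iw$, whose real part is bounded above on $\Omega$ itself. So the statement you must prove is: for every nonzero $v$ in the recession cone there is a $\mathbb{C}$-linear $L$ with $\Re L$ bounded above on $\Omega$ and $L(v)\neq 0$. This is true, but it needs more than supporting the cone: for instance, use that the barrier cone of $\overline{\Omega}$ (real functionals bounded above on $\Omega$) has the same closure as the polar cone $C^{\circ}$, so if some $v\neq 0$ were annihilated by every such complexified $L$, then $v$ and $iv$ would lie in the lineality space of $C$, forcing $\mathbb{C}v\subset C$ and hence a complex affine line inside the open convex set $\Omega$, contradicting $\mathbb{C}$-properness. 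Alternatively, and more in the spirit of the paper, invoke Frankel's Theorem 7.6 \cite{Fr91}: a $\mathbb{C}$-proper convex domain is affinely equivalent to a subdomain of a product of half-planes, which supplies $d$ linearly independent functionals bounded above on $\Omega$ and settles the boundary case and the infinity case uniformly; this is exactly the device the paper uses for the analogous difficulty in Lemma~\ref{infty-lem}.
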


\section{The Gromov compactification of a Gromov hyperbolic space}\label{Sec:2}

Let $(X,d)$ be a metric space and let $I \subset \mathbb R$ be an interval, endowed with the Euclidean metric. An isometry $\gamma: I \rightarrow X$ is called a \emph{geodesic}. If $I=[a,b]$, we call $\gamma$ a {\sl geodesic segment}, if $I=\mathbb R_{\geq 0}$, we call $\gamma$ a {\sl geodesic ray} and if $I=\mathbb R$, we call $\gamma$ a {\sl geodesic line}.

We recall that $(X,d)$ is:

\begin{enumerate}
\item \emph{proper} if every closed ball is compact in $X$,\
\item \emph{geodesic} if every two points $x_1,x_2 \in X$ can be joined by a geodesic segment.
\end{enumerate}

\vspace{1mm}
If $(X,d)$ is a geodesic metric space, a {\sl geodesic triangle} is the union of geodesic segments $\gamma_i:[a_i,b_i] \rightarrow X$, $i=1,2,3$, such that $a_i < b_i$ for every $i=1,2,3$ and $\gamma_1(b_1) = \gamma_2(a_2)$, $\gamma_2(b_2) = \gamma_3(a_3)$, $\gamma_3(b_3) = \gamma_1(a_1)$. The geodesic segments $\gamma_1$, $\gamma_2$ and $\gamma_3$ are called the sides of the triangle.

\begin{definition}\label{grom-def1}
A proper geodesic metric space $(X,d)$ is {\sl Gromov hyperbolic} if there exists $\delta \geq 0$ such that every geodesic triangle is $\delta$-thin, namely if each side of the triangle is contained in a $\delta$-neighborhood of the union of the two other sides.
\end{definition}

We assume for the rest of this subsection that $(X,d)$ is a proper geodesic Gromov hyperbolic metric space. Let $x_0\in X$.
Then let $\mathcal{G}_{x_0}$ denote the space of geodesic rays $\gamma:[0,+\infty) \rightarrow X$ such that $\gamma(0)=x_0$, endowed with the topology of uniform convergence on compact subsets of $[0,+\infty)$. We consider on $\mathcal{G}_{x_0}$ the equivalence relation $\sim$ defined by
$$
\gamma \mathcal \sim \lambda \Leftrightarrow \sup_{t \geq 0} d(\gamma(t),\lambda(t)) <+\infty.
$$

\begin{definition}\label{grom-def2} \ 
\begin{enumerate}[(i)]
\item The \emph{Gromov boundary} $\partial_G X$ of $X$ is defined as the quotient space $\mathcal{G}_{x_0}/\sim$ endowed with the quotient topology.
\item The \emph{Gromov closure} of $X$ is $\overline{X}^G:=X \cup \partial_G X$.
\end{enumerate}
\end{definition}

The choice of the base point $x_0$ is irrelevant. Indeed, given $x_0, x_1\in X$, there is a natural map $J:\mathcal{G}_{x_0}/\sim\,\to \mathcal{G}_{x_1}/\sim$ defined as follows. Let $[\gamma]\in \mathcal{G}_{x_0}/\sim$, where $\gamma:[0,+\infty)\to X$ is a geodesic ray such that $\gamma(0)=x_0$. For $n\in \N$, let $\eta_n:[0, R_n]\to X$ be a geodesic segment such that $\eta_n(0)=x_1$ and $\eta_n(R_n)=\gamma(n)$. Up to extracting subsequences, using Arzel\'a-Ascoli's theorem, we can assume that $\{\eta_n\}$ converges locally uniformly to a geodesic ray $\eta:[0,+\infty)\to X$ such that $\eta(0)=x_1$. We let $J([\gamma])=[\eta]$. Since $(X,d)$ is Gromov hyperbolic, one can easily see that $\eta$ is contained in a finite neighborhood of $\gamma$, hence the map $J$ is well defined and bijective (the inverse being the map $\mathcal{G}_{x_1}/\sim\to \mathcal{G}_{x_0}/\sim$ defined in a similar manner). By the same token, one can see that $J$ is a homeomorphism.

The set $\overline{X}^G$ has a natural topology making it a compactification of $X$ (see for instance~\cite[Chapter III.H.3]{BH1999}) and with this topology, $\overline{X}^G$ is first countable and Hausdorff.

To understand this topology we introduce some additional notation: given a geodesic ray $\sigma\in \mathcal{G}_{x_0}$ define ${\rm End}(\sigma)$ to be the equivalence class of $\sigma$ and given a geodesic segment $\sigma:[0,R] \rightarrow X$ such that $\sigma(0)=x_0$, define ${ \rm End}(\sigma) = \sigma(R)$.  Then $\xi_n \rightarrow \xi$ in $\overline{X}^G$ if and only if for every choice of geodesics $\sigma_n$ with $\sigma_n(0)=x_0$ and ${ \rm End}(\sigma_n)=\xi_n$ every subsequence of $\{\sigma_n\}_{n \in \mathbb N}$ has a subsequence which converges locally uniformly to a geodesic $\sigma$ with ${ \rm End}(\sigma)=\xi$.

\subsection{Geodesics} In this section we recall some basic properties of geodesics in a Gromov hyperbolic metric space. 

By our description of the topology of $\overline{X}^G$ one has the following observation. 

\begin{remark}\label{obs:limits_of_geod} Suppose that $(X,d)$ is a proper geodesic Gromov hyperbolic metric space. If $\sigma : \mathbb R \rightarrow X$ is a geodesic, then the limits 
\begin{align*}
\lim_{t \rightarrow -\infty} \sigma(t) \text{ and } \lim_{t \rightarrow +\infty} \sigma(t)
\end{align*}
both exist in $\overline{X}^G$ and are distinct. 
\end{remark} 

One more important property of Gromov hyperbolic metric spaces is that geodesics joining two points in the boundary ``bend'' into the space. More precisely: 

\begin{theorem}\label{thm:visible} Let $(X,d)$ be a proper geodesic Gromov hyperbolic metric space and let $x_0 \in X$. If $\xi,\eta \in \partial_G X$ and $V_\xi, V_\eta$ are neighborhoods of $\xi,\eta$ in $\overline{X}^G$ so that $\overline{V_\xi} \cap \overline{V_\eta} = \emptyset$, then there exists a compact set $K \subset X$ with the following property: if $\sigma: [a,b] \rightarrow X$ is a geodesic with $\sigma(a) \in V_\xi$ and $\sigma(b) \in V_\eta$, then $\sigma \cap K \neq \emptyset$. 
\end{theorem}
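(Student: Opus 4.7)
I would argue by contradiction. Suppose the theorem fails: then for each $n \in \mathbb{N}$ there is a geodesic segment $\sigma_n:[a_n,b_n]\to X$ with $\sigma_n(a_n)\in V_\xi$, $\sigma_n(b_n)\in V_\eta$, and $r_n:=d(x_0,\sigma_n([a_n,b_n]))\to+\infty$. Since $(X,d)$ is proper, the Gromov compactification $\overline{X}^G$ is compact, so after passing to a subsequence I may assume that $\sigma_n(a_n)\to\xi'$ and $\sigma_n(b_n)\to\eta'$ in $\overline{X}^G$. Because $d(x_0,\sigma_n(a_n))\ge r_n\to+\infty$, and symmetrically for $\sigma_n(b_n)$, the limits $\xi',\eta'$ lie in $\partial_G X$; moreover $\xi'\in\overline{V_\xi}$ and $\eta'\in\overline{V_\eta}$, and since by hypothesis $\overline{V_\xi}\cap\overline{V_\eta}=\emptyset$, we obtain $\xi'\ne\eta'$.

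The heart of the argument is to show that the Gromov products
\begin{equation*}
(\sigma_n(a_n)\,|\,\sigma_n(b_n))_{x_0}=\tfrac12\bigl(d(x_0,\sigma_n(a_n))+d(x_0,\sigma_n(b_n))-(b_n-a_n)\bigr)
\end{equation*}
tend to $+\infty$. I would invoke the standard consequence of $\delta$-hyperbolicity that for any geodesic segment $[y,z]$ in $X$,
\begin{equation*}
(y|z)_{x_0}\ \ge\ d(x_0,[y,z])-C\delta,
\end{equation*}
where $C$ is a universal constant (this is, up to a bounded error, the statement that the ``center'' of a $\delta$-thin triangle lies within $O(\delta)$ of each of its three sides; see \cite[Ch.~III.H]{BH1999}). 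Applying this with $y=\sigma_n(a_n)$, $z=\sigma_n(b_n)$ and taking $\sigma_n$ itself as the geodesic from $y$ to $z$, one obtains $(\sigma_n(a_n)|\sigma_n(b_n))_{x_0}\ge r_n-C\delta\to+\infty$.

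To conclude, I combine this with the classical characterization of the Gromov boundary: if $y_n\to\xi'$ and $z_n\to\eta'$ in $\overline{X}^G$, then $(y_n|z_n)_{x_0}\to+\infty$ forces $\xi'=\eta'$. (This follows from the description of convergence in $\overline{X}^G$ recalled after Definition~\ref{grom-def2}: geodesic rays representing the same boundary point stay within a bounded Hausdorff distance.) Applying this to our sequences contradicts $\xi'\neq\eta'$ established in the first paragraph, completing the argument. The main technical obstacle is the Gromov-product/distance-to-geodesic inequality used in the second paragraph: a self-contained proof proceeds by applying the $\delta$-thin triangle condition to $(x_0,\sigma_n(a_n),\sigma_n(b_n))$ and tracking how a nearest point to $x_0$ on $\sigma_n$ is related, via thinness, to points on the other two sides---essentially bookkeeping, but this is the one place where $\delta$-hyperbolicity is used in an essential way.
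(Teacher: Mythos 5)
Your plan is correct, and since the paper does not reprove this statement but delegates it to \cite{BGS1985} and \cite{BH1999}, your argument is essentially the standard visibility proof found there: argue by contradiction, use properness to get geodesics $\sigma_n$ with $d(x_0,\sigma_n)\to\infty$ whose endpoints subconverge to boundary points $\xi'\in\overline{V_\xi}$, $\eta'\in\overline{V_\eta}$ (distinct because the closures are disjoint), and play the hyperbolicity estimate $d(x_0,[y,z])\le (y|z)_{x_0}+C\delta$ against the boundedness of Gromov products along sequences tending to distinct boundary points. The only step worth spelling out, because the paper defines $\partial_G X$ via geodesic rays rather than via sequences and Gromov products, is the final characterization: it is obtained by the standard fellow-traveling fact that the geodesics $[x_0,\sigma_n(a_n)]$ and $[x_0,\sigma_n(b_n)]$ stay $O(\delta)$-close up to time $(\sigma_n(a_n)\,|\,\sigma_n(b_n))_{x_0}$, combined with Arzel\`a--Ascoli and the paper's description of convergence in $\overline{X}^G$ — a routine addition, not a gap.
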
  

For a proof see for instance~\cite[page 54]{BGS1985} or~\cite[page 294]{BH1999}. This result has the following corollary. 

\begin{corollary}\label{cor:Gromov_Product} Let $(X,d)$ be a proper geodesic Gromov hyperbolic metric space and let $x_0 \in X$. If $\xi,\eta \in \partial_G X$ and $V_\xi, V_\eta$ are neighborhoods of $\xi,\eta$ in $\overline{X}^G$ so that $\overline{V_\xi} \cap \overline{V_\eta} = \emptyset$, then there exists some $A \geq 0$ such that 
\begin{align*}
d(x,y) \geq d(x,x_0)+d(x_0,y) - A
\end{align*}
for all $x \in V_\xi$ and $y \in V_\eta$. 
\end{corollary}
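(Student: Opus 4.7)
The plan is to deduce the corollary directly from Theorem~\ref{thm:visible} by combining the visibility property with the triangle inequality. First I would apply Theorem~\ref{thm:visible} to the neighborhoods $V_\xi$ and $V_\eta$ to obtain a compact subset $K \subset X$ with the stated intersection property. Since $K$ is compact and the function $z \mapsto d(x_0, z)$ is continuous, the quantity
\begin{align*}
R := \sup_{z \in K} d(x_0, z)
\end{align*}
is a finite constant, and the conclusion of Theorem~\ref{thm:visible} guarantees that every geodesic with one endpoint in $V_\xi$ and the other in $V_\eta$ passes through a point lying within distance $R$ of the base point $x_0$.

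For the second step, I would fix arbitrary points $x \in V_\xi \cap X$ and $y \in V_\eta \cap X$ and use the fact that $(X,d)$ is a proper geodesic metric space to select a geodesic segment $\sigma \colon [0, d(x,y)] \rightarrow X$ with $\sigma(0)=x$ and $\sigma(d(x,y))=y$. The visibility conclusion furnishes some $t_0 \in [0, d(x,y)]$ with $\sigma(t_0) \in K$, and in particular $d(x_0, \sigma(t_0)) \leq R$. Applying the triangle inequality to the two subsegments yields $d(x, \sigma(t_0)) \geq d(x, x_0) - R$ and $d(\sigma(t_0), y) \geq d(x_0, y) - R$, and since $\sigma$ is a geodesic one has
\begin{align*}
d(x, y) = d(x, \sigma(t_0)) + d(\sigma(t_0), y) \geq d(x,x_0) + d(x_0, y) - 2R.
\end{align*}
Setting $A := 2R$ then gives the claim.

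There is no serious obstacle in this argument; all the work has already been done in Theorem~\ref{thm:visible}. The content of the corollary is simply a metric reformulation of visibility: forcing a geodesic between two disjoint boundary neighborhoods to cross a fixed bounded region immediately translates, via the triangle inequality, into the statement that the Gromov product of any two such points (with respect to $x_0$) is uniformly bounded above.
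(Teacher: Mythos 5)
Your proof is correct and follows essentially the same route as the paper: apply Theorem~\ref{thm:visible} to obtain the compact set $K$, bound $\sup_{z\in K} d(x_0,z)$ by some $R$, and use the additivity of distance along the geodesic through $K$ together with the triangle inequality to get the estimate with $A=2R$. The only (harmless) difference is that you make explicit the restriction to points of $V_\xi\cap X$ and $V_\eta\cap X$, which the paper leaves implicit.
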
  

\begin{proof} Let $K$ be the compact set from Theorem~\ref{thm:visible}. Then let 
\begin{align*}
A = 2 \max\{ d(k,x_0) : k \in K\}.
\end{align*}

Now suppose that  $x \in V_\xi$ and $y \in V_\eta$. Let $\sigma: [a,b] \rightarrow X$ be a geodesic with $\sigma(a)=x$ and $\sigma(b)=y$. Then there exists some $t \in [a,b]$ such that $\sigma(t) \in K$. Then 
\begin{align*}
d(x,y) = d(x,\sigma(t)) + d(\sigma(t), y) \geq d(x,x_0) + d(x_0,y) -A.
\end{align*}
\end{proof}

\subsection{Quasi-geodesics, quasi-isometries, and the shadowing lemma}

\begin{definition}\label{quasi-def} Let $(X,d_X)$ and $(Y,d_Y)$ be metric spaces and let $A \geq 1,\ B \geq 0$.

\begin{enumerate}
\item If $I \subset \mathbb R$ is an interval, then a map $\gamma:I \rightarrow X$ is an {\sl $(A,B)$-quasi-geodesic} if for all $s,t \in I$:
\begin{equation}\label{quasi-eq}
\frac{1}{A}|t-s| - B \leq d_X(\gamma(s), \gamma(t)) \leq A |t-s| + B.
\end{equation}
If $I=[a,b]$ (resp. $I=\mathbb R_{\geq 0}$ or $I=\mathbb R$) we call $\gamma$ a quasi-geodesic segment (resp. quasi-geodesic ray or quasi-geodesic line).
\item A map $f:X \rightarrow Y$ is an {\sl$(A,B)$-quasi-isometry} if for all $x_1, x_2 \in X$:
$$
\frac{1}{A} d_X(x_1,x_2) - B \leq d_Y(f(x_1), f(x_2)) \leq A d_X(x_1,x_2) + B.
$$
\end{enumerate}
\end{definition}

\begin{remark} \ \begin{enumerate} 
\item Notice that an $(A,B)$-quasi-geodesic in $(X,d)$ is an $(A,B)$-quasi-isometry from $\left(I, | \cdot |\right)$, where $I$ is an interval of $\mathbb R$, to $(X,d)$.
\item When $f$ is a bijective quasi-isometry from $(X,d_X)$ to $(Y,d_Y)$, then $f^{-1}$ is also a quasi-isometry. 
\item If $f: (X, d_X) \rightarrow (Y,d_Y)$ is a quasi-isometry and a bijection between proper geodesic metric spaces, then $(X,d_X)$ is Gromov hyperbolic if and only if $(Y,d_Y)$ is Gromov hyperbolic, see~\cite{GhHa}.
\end{enumerate}
\end{remark}

\vspace{1mm}
Throughout the paper, we will say that a curve $\mathcal C$ in a metric space $(X,d)$ is an $(A,B)$- quasi-geodesic if there is some parametrisation $\gamma:I \rightarrow X$ of $\mathcal C$, where $I\subset \mathbb R$ is an interval, such that $\gamma$ satisfies Condition~(\ref{quasi-eq}).

We will use the following fact repeatedly.

\begin{theorem}[Shadowing Lemma]\label{thm:shadow_lemma} Suppose that $(X,d)$ is a proper geodesic Gromov hyperbolic metric space. For any $A  \geq  1$ and $B \geq 0$ there exists $R > 0$ such that: if $\gamma_1:[a_1,b_1] \rightarrow X$ and $\gamma_2:[a_2,b_2] \rightarrow X$ are two $(A,B)$-quasi-geodesic segments with $\gamma_1(a_1) = \gamma_2(a_2)$ and $\gamma_1(b_1) = \gamma_2(b_2)$, then 
\begin{align*}
\max \left\{ \max_{t \in [a_1,b_1]} d(\gamma_1(t), \gamma_2([a_2,b_2])) , \max_{t \in [a_2,b_2]} d(\gamma_2(t), \gamma_1([a_1,b_1])) \right\} \leq R.
\end{align*} 
\end{theorem}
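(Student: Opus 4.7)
The plan is to deduce the Shadowing Lemma from the classical Morse Stability Lemma: there exists $R' = R'(\delta, A, B) \geq 0$ such that whenever $\gamma$ is an $(A, B)$-quasi-geodesic segment and $\sigma$ is a geodesic segment sharing its endpoints with $\gamma$, the Hausdorff distance between the images of $\gamma$ and $\sigma$ is at most $R'$. Once this is known, Theorem~\ref{thm:shadow_lemma} is immediate: given $\gamma_1, \gamma_2$ sharing endpoints, pick any geodesic segment $\sigma$ with those endpoints (which exists because $(X,d)$ is geodesic), apply Morse to each $\gamma_i$, and conclude by the triangle inequality for Hausdorff distance that $\gamma_1$ and $\gamma_2$ lie within $2R'$ of each other; set $R := 2R'$.

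To prove the Morse Stability Lemma I would first perform a taming reduction. Given $\gamma:[a,b] \to X$, construct a continuous, $(A+B)$-Lipschitz, $(A', B')$-quasi-geodesic $\widetilde{\gamma}$ with the same endpoints by setting $\widetilde{\gamma} = \gamma$ on an integer net of $[a,b]$ and interpolating by geodesic segments between consecutive net points. Direct triangle-inequality estimates show that $A'$, $B'$, and $\sup_t d(\gamma(t), \widetilde{\gamma}(t))$ depend only on $A, B$, so it suffices to establish stability for $\widetilde{\gamma}$. Hence one may assume $\gamma$ itself is continuous.

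Next, to prove $\gamma \subset N_{R_1}(\sigma)$, suppose for contradiction that $d(\gamma(t_0), \sigma) \geq D$ for some large $D = D(\delta, A, B)$, and let $[t_-, t_+] \ni t_0$ be the maximal subinterval on which $d(\gamma(t), \sigma) \geq D/2$. Pick closest points $q_\pm \in \sigma$ to $\gamma(t_\pm)$ and form the geodesic ``quadrilateral'' with vertices $\gamma(t_-), q_-, q_+, \gamma(t_+)$, one side of which is $\gamma|_{[t_-, t_+]}$. Splitting the quadrilateral into two geodesic triangles and iterating $\delta$-thinness dyadically along the piecewise-geodesic boundary yields a bound of the form $d(\gamma(t_0), \sigma) \leq \delta \log_2(t_+ - t_-) + D/2 + O(\delta)$. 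But the quasi-isometry inequalities force $t_+ - t_-$ to be at most linear in $D$ (since $\gamma(t_\pm)$ are at distance $\leq D + O(1)$ apart in $X$), so for $D$ large the logarithmic bound contradicts $d(\gamma(t_0), \sigma) \geq D$. The reverse inclusion $\sigma \subset N_{R_2}(\gamma)$ then follows from a connectivity argument: continuity of $\gamma$ makes the nearest-point projection $\pi: \gamma \to \sigma$ coarsely continuous (jumps bounded in terms of the Lipschitz constant and $R_1$), and $\pi$ carries the endpoints of $\gamma$ to those of $\sigma$, so $\pi(\gamma)$ is coarsely connected and spans $\sigma$, giving the desired bound.

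The main obstacle is the dyadic thin-triangle argument in the third paragraph, which encodes the essential quantitative phenomenon of $\delta$-hyperbolicity --- exponential divergence of geodesics --- and requires careful bookkeeping of constants and the passage from thin triangles to thin quadrilaterals. The remaining steps, namely the taming reduction and the connectivity-based reverse inclusion, are routine once the quantitative excursion bound is in place.
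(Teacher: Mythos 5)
The paper does not actually prove this lemma; it cites \cite{CoDePa90} (Th\'eor\`eme 1.3, p.~25) and \cite{GhHa} (Th\'eor\`eme 11, p.~87), so your proposal has to be judged as a from-scratch proof of the Morse stability lemma. Your global architecture is the standard one (reduce to a quasi-geodesic versus a geodesic with the same endpoints, tame, prove the two inclusions, conclude by the triangle inequality for the Hausdorff distance), and both the taming step and the connectivity argument for the second inclusion are fine. The gap is in your key excursion estimate. First, the bound ``$t_+-t_-$ is at most linear in $D$'' rests on the assertion that $d(\gamma(t_-),\gamma(t_+))\leq D+O(1)$, which is unjustified: by construction you only know $d(\gamma(t_\pm),\sigma)=D/2$, and the nearest points $q_-,q_+$ may be far apart along $\sigma$, so at this stage nothing bounds $d(\gamma(t_-),\gamma(t_+))$ in terms of $D$ alone --- such a bound is essentially part of what is being proved. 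Second, the dyadic thin-triangle argument controls the distance from a point \emph{on a geodesic} to a path joining its endpoints (this is the lemma $d(x,\mathrm{im}\,c)\leq \delta\,|\log_2 \ell(c)|+1$ for $x$ on a geodesic with the same endpoints as $c$, see \cite[Ch.~III.H, Lemma 1.6]{BH1999}); it does not control the distance from $\gamma(t_0)$, which lies on the non-geodesic side of your quadrilateral, to $\sigma$. Indeed the inequality you assert, $d(\gamma(t_0),\sigma)\leq \delta\log_2(t_+-t_-)+D/2+O(\delta)$, is false for general continuous paths (a path going straight out to distance $T$ and back has length $2T$ but excursion $T$), so it cannot follow from thinness of the subdivided quadrilateral alone; the quasi-geodesic property would have to enter inside the dyadic induction, and your sketch does not indicate how.

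The standard repair is to run the two inclusions in the opposite order. First prove $\sigma\subset N_{D_1}(\gamma)$ with $D_1=D_1(\delta,A,B)$: let $x_0\in\sigma$ be a point farthest from the tamed quasi-geodesic, at distance $D_0$; move along $\sigma$ a distance $2D_0$ on either side of $x_0$ to points $y,z$, jump to nearest points $y',z'$ on $\gamma$, and join $y'$ to $z'$ along $\gamma$. The resulting path joins $y$ to $z$, avoids the open ball $B(x_0,D_0)$, and has length linear in $D_0$, because a tamed quasi-geodesic has arclength linearly bounded in terms of the distance between its endpoints. Applying the logarithmic lemma to the point $x_0$ of the geodesic $[y,z]\subset\sigma$ gives $D_0\leq \delta\log_2(\text{linear in }D_0)+1$, hence $D_0\leq D_1$. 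Only then bound the excursions of $\gamma$ outside $N_{D_1}(\sigma)$: on a maximal excursion interval $[t_-,t_+]$, every point of the relevant subsegment of $\sigma$ is within $D_1$ of $\gamma$ restricted to the complement of $(t_-,t_+)$, so by connectedness some point of $\sigma$ is within $D_1$ of both $\gamma([a,t_-])$ and $\gamma([t_+,b])$; the lower quasi-geodesic inequality then bounds $t_+-t_-$, and the Lipschitz bound from taming bounds the excursion. With this reordering your outline becomes the argument of the cited references; as written, the central step does not go through.
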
 

For a proof see for instance~\cite{CoDePa90} Th\'eor\`eme 1.3 p.25, or~\cite{GhHa} Th\'eor\`eme 11 p.87.

Using Theorem~\ref{thm:shadow_lemma} and Remark~\ref{obs:limits_of_geod} we have the following. 

\begin{remark}\label{obs:limits_of_quasi_geod} Suppose that $(X,d)$ is a proper geodesic Gromov hyperbolic metric space. If $\sigma : \mathbb R \rightarrow X$ is a quasi-geodesic, then the limits 
\begin{align*}
\lim_{t \rightarrow -\infty} \sigma(t) \text{ and } \lim_{t \rightarrow +\infty} \sigma(t)
\end{align*}
both exist in $\overline{X}^G$ and are distinct. 
\end{remark}

\section{Commuting 1-Lipschitz self maps of a Gromov hyperbolic metric space}\label{Sec:4}

In this section we study commuting 1-Lipschitz maps of a proper geodesic Gromov hyperbolic metric space.

Suppose that $(X,d)$ is a proper geodesic Gromov hyperbolic metric space. Further, suppose that $f,g : X \rightarrow X$ are commuting 1-Lipschitz maps and that there exist $\xi_f, \xi_g \in \partial_G X$ so that for all $x \in X$, it holds

\begin{equation}\label{different Wolff}
f^{n}(x)\rightarrow \xi_f \text{ and } g^{n}(x)\rightarrow \xi_g.
 \end{equation}

\begin{proposition}\label{prop:commuting} With the notation above, suppose that $\xi_g \neq \xi_f$. Then there exists a compact set $K \subset X$ such that: for every $m \geq 0$ there exists $n = n(m) \geq 0$ with 
$$
K \cap f^mg^{n}(K) \neq \emptyset.
$$
\end{proposition}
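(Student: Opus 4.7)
The plan is to take $K = \overline{B}(x_0, R)$ for a sufficiently large radius $R$, and for each $m$ produce $n = n(m)$ so that $f^m g^{n(m)}(x_0) \in K$; since $x_0 \in K$ automatically, this places $f^m g^{n(m)}(x_0)$ in $K \cap f^m g^{n(m)}(K)$. Fix disjoint open neighborhoods $V_f, V_g$ of $\xi_f, \xi_g$ in $\overline{X}^G$ with $\overline{V_f} \cap \overline{V_g} = \emptyset$, and apply Corollary~\ref{cor:Gromov_Product} to obtain $A \geq 0$ such that $d(x,y) \geq d(x,x_0) + d(x_0,y) - A$ whenever $x \in V_f$, $y \in V_g$. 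Write $h_m := d(x_0, f^m(x_0))$ and $\ell_n := d(x_0, g^n(x_0))$; by hypothesis \eqref{different Wolff} both $h_m, \ell_n \to \infty$, while 1-Lipschitz continuity of $g$ yields the subadditive estimate $\ell_{n+1} \leq \ell_n + \ell_1$. Pick $M_0, N_0$ with $f^m(x_0) \in V_f$ for $m \geq M_0$ and $g^n(x_0) \in V_g$ for $n \geq N_0$.

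The key geometric input is a standard fact about $\delta$-hyperbolic spaces: whenever $d(a,b) \geq r + s - C$, the intersection $\overline{B}(a,r) \cap \overline{B}(b,s)$ has diameter bounded by a constant depending only on $C$ and $\delta$. Indeed, any $p$ in the intersection satisfies $(a | b)_p \leq C/2$, placing it within $O(C + \delta)$ of every geodesic from $a$ to $b$, and moreover $d(a,p) \in [r-C, r]$ pins the projection of $p$ to an interval of length $C$ on that geodesic. For each $m \geq M_0$ with $h_m > \max\{\ell_n : 0 \leq n < N_0\}$, I let $n(m)$ be the smallest $n$ with $\ell_n \geq h_m$; then $n(m) \geq N_0$, and subadditivity forces $\ell_{n(m)} \in [h_m, h_m + \ell_1]$.

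Now I apply the geometric fact with $a = f^m(x_0)$, $b = g^{n(m)}(x_0)$, $r = \ell_{n(m)}$ and $s = h_m$. Corollary~\ref{cor:Gromov_Product} gives $d(a,b) \geq h_m + \ell_{n(m)} - A$, so the intersection $\overline{B}(a, \ell_{n(m)}) \cap \overline{B}(b, h_m)$ has diameter $O(A + \delta)$. Using commutativity $f^m g^{n(m)} = g^{n(m)} f^m$ together with 1-Lipschitz continuity, $d(a, f^m g^{n(m)}(x_0)) \leq d(x_0, g^{n(m)}(x_0)) = \ell_{n(m)}$ and $d(b, f^m g^{n(m)}(x_0)) \leq d(x_0, f^m(x_0)) = h_m$, so $f^m g^{n(m)}(x_0)$ lies in this intersection; so does the point $p \in [a,b]$ at distance $\ell_{n(m)}$ from $a$. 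Corollary~\ref{cor:Gromov_Product} also places $x_0$ within $O(A+\delta)$ of the geodesic $[a,b]$, with projection at distance approximately $h_m$ from $a$; since $|\ell_{n(m)} - h_m| \leq \ell_1$, this gives $d(x_0, p) \leq \ell_1 + O(A+\delta)$. Combining, $d(x_0, f^m g^{n(m)}(x_0))$ is bounded uniformly in $m$ by some constant $R$. The finitely many remaining small $m$ are handled by setting $n(m) = 0$ and enlarging $R$ to accommodate the points $f^m(x_0)$.

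I expect the main technical obstacle to be the intersection-diameter lemma for balls whose centers are nearly at distance $r + s$ in a Gromov hyperbolic space, together with the careful bookkeeping of the various additive constants throughout the hyperbolic-geometric estimates; while a standard consequence of the coarse uniqueness of nearest-point projections onto geodesics, making all the constants explicit takes some care.
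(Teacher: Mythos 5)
Your proposal is correct, and its skeleton coincides with the paper's: you pick $n(m)$ so that $d(x_0,g^{n(m)}(x_0))$ matches $d(x_0,f^m(x_0))$ up to the additive constant $\ell_1=d(x_0,g(x_0))$, you use the visibility statement (via Corollary~\ref{cor:Gromov_Product}) to get $d(f^m(x_0),g^{n}(x_0))\ge h_m+\ell_n-A$, and you use $1$-Lipschitzness plus commutativity to get $d(f^m(x_0),f^mg^{n}(x_0))\le \ell_n$ and $d(g^{n}(x_0),f^mg^{n}(x_0))\le h_m$ — exactly the three estimates at the heart of the paper's argument. Where you genuinely diverge is the last step: the paper concatenates the two geodesics through $f^mg^{n}(x_0)$, checks that the broken path is a $(1,2r)$-quasi-geodesic, and invokes the Shadowing Lemma (Theorem~\ref{thm:shadow_lemma}) together with the visibility compact set to locate $x_0$ near a point of that path at bounded parameter; you instead observe that $f^mg^{n}(x_0)$ lies in $\overline{B}(f^m(x_0),\ell_{n})\cap\overline{B}(g^{n}(x_0),h_m)$, whose diameter is $O(A+\delta)$ because the centers are at distance at least $\ell_n+h_m-A$, and then pin both that intersection and the projection of $x_0$ to nearby points of the geodesic $[f^m(x_0),g^{n}(x_0)]$. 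Your route avoids quasi-geodesics and the shadowing lemma altogether, at the price of importing the standard (but not stated in the paper) comparison between the Gromov product $(a|b)_p$ and $d(p,[a,b])$, on which both your intersection-diameter lemma and your localization of $x_0$'s projection rest; your sketch of that lemma is correct. Two small points of bookkeeping: the fact that $x_0$ lies within a bounded distance of $[f^m(x_0),g^{n}(x_0)]$ is most directly Theorem~\ref{thm:visible} (the Corollary as stated only records the resulting distance inequality, though the Gromov-product computation you indicate also works), and the existence of the point $p$ on the geodesic at distance exactly $\ell_{n(m)}$ from $f^m(x_0)$ requires $h_m\ge A$, which, like your other exceptional cases, is absorbed by enlarging $K$ for the finitely many small $m$.
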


\begin{proof}

Fix some $x_0 \in X$. Since $\xi_f \neq \xi_g$, Theorem~\ref{thm:visible} implies that there exists some $r > 0$ such that: if $m,n \geq 0$ and $\gamma: [a,b] \rightarrow X$ is a geodesic segment with $\gamma(a) = f^m(x_0)$ and $\gamma(b) = g^{n}(x_0)$, then there exists some $t \in [a,b]$ such that
\begin{align*}
d(\gamma(t), x_0) \leq r.
\end{align*}
Then by the proof of Corollary~\ref{cor:Gromov_Product}: If $m,n \geq 0$, then 
\begin{align}
d(f^m(x_0), g^{n}(x_0))  \geq d(f^m(x_0),x_0) + d(x_0, g^{n}(x_0)) - 2r \label{eq:dist_est}.
\end{align}

By Theorem~\ref{thm:shadow_lemma} there exists  $R \geq 0$ so that: if $\gamma_1:[a_1,b_1] \rightarrow X$ and $\gamma_2:[a_2,b_2] \rightarrow X$ are $(1,2r)$-quasi-geodesic segments with $\gamma_1(a_1) = \gamma_2(a_2)$ and $\gamma_1(b_1) = \gamma_2(b_2)$, then 
\begin{align*}
\max \left\{ \max_{t \in [a_1,b_1]} d(\gamma_1(t), \gamma_2([a_2,b_2])) , \max_{t \in [a_2,b_2]} d(\gamma_2(t), \gamma_1([a_1,b_1])) \right\} \leq R.
\end{align*} 
Finally, let $C = d(x_0, g(x_0))$. 

Now fix $m \geq 0$. We claim that there exists $n = n(m)>0$ so that
\begin{align*}
d(f^mg^{n}(x_0), x_0) \leq 4r + 2R + C/2.
\end{align*}

Since 
\begin{align*}
\abs{d(g^{n}(x_0), x_0) - d(g^{n+1}(x_0), x_0)} \leq d(g^{n}(x_0), g^{n+1}(x_0)) \leq d(x_0, g(x_0))=C
\end{align*}
for every $n > 0$ and 
\begin{align*}
\lim_{n \rightarrow \infty} d(g^{n}(x_0), x_0) =\infty,
\end{align*}
there exists some $n \geq 0$ so that 
\begin{align*}
\abs{ d(f^m(x_0),x_0) - d(g^{n}(x_0), x_0)} \leq C/2.
\end{align*}

Let $\gamma_1 :[0,T_1] \rightarrow X$ be a geodesic segment with  $\gamma_1(0)= f^mg^{n}(x_0)$ and $\gamma_1(T_1)=f^m(x_0)$. Also let 
$\gamma_2 :[0,T_2] \rightarrow X$ be a geodesic segment with $\gamma_2(0)= f^mg^{n}(x_0)$ and $\gamma_2(T_2)=g^{n}(x_0)$. Finally define the curve $\gamma: [-T_1, T_2] \rightarrow X$ by
\begin{align*}
\gamma(t) = \left\{ 
\begin{array}{ll}
\gamma_1(-t) & \text{ if $t \leq 0$} \\
\gamma_2(t) & \text{ if $t \geq 0$.} 
\end{array}
\right.
\end{align*}

\noindent \textbf{Claim 1.} For all $s, t \in [-T_1, T_2]$ we have 
\begin{align*}
\abs{t-s}-2r \leq d(\gamma(s), \gamma(t)) \leq \abs{t-s}.
\end{align*}
In particular, $\gamma$ is a $(1, 2r)$-quasi-geodesic. 

\begin{proof}[Proof of Claim 1.] Since $\gamma_1$ and $\gamma_2$ are both geodesics, we clearly have 
\begin{align*}
d(\gamma(s), \gamma(t)) \leq \abs{s-t}
\end{align*}
for all $s,t \in [-T_1, T_2]$. Further, if $s$ and $t$ have the same sign, then
\begin{align*}
\abs{t-s} = d(\gamma(s), \gamma(t)).
\end{align*}
Thus it is enough to show that 
\begin{align*}
(t-s) - 2r \leq d(\gamma(s), \gamma(t)).
\end{align*}
for all $-T_1 \leq s \leq 0 \leq t \leq T_2$. In this case we have
\begin{align*}
d(\gamma(s), \gamma(t)) 
&= d(\gamma_1(-s), \gamma_2(t))\\
& \geq d(\gamma_1(T_1), \gamma_2(T_2))-d(\gamma_1(-s), \gamma_1(T_1))-d(\gamma_2(T_2), \gamma_2(t)) \\
& =  d(\gamma_1(T_1), \gamma_2(T_2)) - (T_1+s)-(T_2-t) \\
& = (t-s) + d(f^m(x_0), g^{n}(x_0)) - d(f^m(x_0), f^m g^{n} (x_0)) - d(g^{n}(x_0), f^m g^{n} (x_0)) \\
& \geq (t-s) + d(f^m(x_0), g^{n}(x_0)) - d(x_0, g^{n} (x_0)) - d(x_0, f^m (x_0)).
\end{align*}
So by Equation~\eqref{eq:dist_est} we have
\begin{align*}
d(\gamma(s), \gamma(t))  \geq (t-s) - 2r.
\end{align*}
Hence $\gamma$ is a $(1,2r)$-quasi-geodesic. 
\end{proof}

\noindent \textbf{Claim 2.} $T_2 \leq d(x_0, f^m(x_0)) \leq T_2 + 2r$ and $T_1 \leq d(x_0, g^{n}(x_0)) \leq T_1 + 2r$.

\begin{proof}[Proof of Claim 2.]  Since $f$ and $g$ are 1-Lipschitz we have
\begin{align*}
T_1 = d(f^m(x_0), f^m g^{n}(x_0)) \leq d(x_0, g^{n}(x_0))
\end{align*}
and  
\begin{align*}
T_2 = d(g^{n}(x_0), f^m g^{n}(x_0)) \leq d(x_0, f^m(x_0)). 
\end{align*}
Now 
\begin{align*}
T_1 + T_2 & = d(f^m(x_0), f^m g^{n}(x_0))+d(f^m g^{n}(x_0),g^{n}(x_0)) \\
& \geq d(f^m(x_0), g^{n}(x_0)) \geq d(f^m(x_0),x_0) + d(x_0, g^{n}(x_0)) - 2r
\end{align*}
where we used Equation~\eqref{eq:dist_est} in the last step. Then since $T_1 \leq d(x_0, g^{n}(x_0))$ we then have
\begin{align*}
T_2 + 2r \geq d(f^m(x_0),x_0).
\end{align*}
A similar argument shows that 
\begin{align*}
T_1 + 2r \geq d(x_0,g^{n}(x_0)).
\end{align*}
\end{proof}

Now the previous claim implies that
\begin{align*}
\abs{T_1-T_2} \leq 2r+\abs{d(x_0, f^m(x_0))-d(x_0, g^{n}(x_0))} \leq 2r+C/2 .
\end{align*}

Next let $\sigma : [0,T] \rightarrow X$ be a geodesic segment with $\sigma(0)=f^m(x_0)$ and $\sigma(T) = g^{n}(x_0)$. Then by our choice of $R$, we have
\begin{align*}
d(\sigma(t), \gamma) \leq R
\end{align*}
for all $t \in [0,T]$. So, by the definition of $r$, there exists some $t_0 \in [-T_1,T_2]$ so that 
\begin{align*}
d(\gamma(t_0), x_0) \leq r+R.
\end{align*}

Finally, we show that
\begin{align*}
d(x_0, f^m g^{n}(x_0)) \leq 4r + 2R + C/2.
\end{align*}
First, note that
\begin{align*}
t_0+T_1  & \geq d(\gamma(t_0), \gamma(-T_1)) \geq d(x_0, \gamma(-T_1)) - d(x_0,\gamma(t_0)) \\
&  \geq d(x_0, f^m(x_0)) - r - R \geq T_2 - r - R.
\end{align*}
So 
\begin{align*}
t_0 \geq T_2-T_1-r-R.
\end{align*}
Arguing in a similar way, we also have
\begin{align*}
T_2-t_0 \geq d(\gamma(t_0), \gamma(T_2)) \geq T_1 - r - R
\end{align*}
and
\begin{align*}
t_0 \leq T_2-T_1+r+R.
\end{align*}
So 
\begin{align*}
\abs{t_0} \leq \abs{T_1-T_2}+r+R \leq 3r + R + C/2.
\end{align*}

Thus
\begin{align*}
d(x_0, f^{n} g^m(x_0)) 
&\leq d(x_0, \gamma(t_0)) + d(\gamma(t_0), f^{n} g^m(x_0)) \\
& =  d(x_0, \gamma(t_0))+  d(\gamma(t_0), \gamma(0)) \\
& \leq r+ R + \abs{t_0} \\
& \leq 4r + 2R + C/2.
\end{align*}
\end{proof}

\section{The ends of an unbounded convex domain}\label{Sec:5}

\subsection{The end compactification}\label{sec:end_compactification} In this section we define the end compactification. The end compactification of a topological space was introduced by H. Freudenthal~\cite{Fre31}, see also \cite{P} for a comprehensive survey on the subject. For simplicity we will only consider the case when $X$ is a manifold. Then there exists an increasing sequence $K_0 \subset K_1 \subset K_2 \subset \cdots$ of compact subsets with $X = \cup_{n \geq 0} K_n$. By compactness, each $X \setminus K_n$ has finitely many components. An \emph{end of $X$} is a decreasing sequence $U_0 \supset U_1 \supset U_2 \supset \cdots$ of open sets where each $U_n$ is a connected component of $X\setminus K_n$. Let $E[X]$ denote the set of ends. The set $X \cup E[X]$ has a natural topology making it a compactification of $X$ where each end $(U_j)_{j \geq 0} \in E[X]$ has a neighborhood basis
\begin{align*}
U_k \cup \{ (V_ j)_{j \geq 0} \in E[X] : V_j = U_j \text{ for } j \leq k\}, \quad k \geq 0.
\end{align*}
Then the space $X \cup E[X]$ is compact and Hausdorff. Further, the subset $E[X]$ is closed and totally disconnected. Finally, this compactification does not depend on the choice of compact sets  $K_0 \subset K_1 \subset K_2 \subset \cdots$.

\subsection{Unbounded convex domains} We now make some observations about the structure of unbounded convex domains. 

For $x\in \R^l$ and $R>0$ we let 
\[
B(x,R):=\{w\in \R^l: \|w-z\|<R\}
\]
 be the Euclidean ball of center $x$ and radius $R$.

\begin{definition}
Let $D\subset \R^l$ be an unbounded convex domain. A vector $v\in \R^l$, $\|v\|=1$, is called a {\sl direction at $\infty$ for $D$} if there exists $x\in D$ such that $x+tv\in D$ for all $t\geq 0$. Then let $S_\infty(D) \subset \R^l$ be the set of directions at infinity for $D$. 
\end{definition}

By convexity of $D$, if $v$ is a direction at $\infty$ for $D$, then for every $z\in D$ and all $t\geq 0$ it holds $z+tv\in D$.

\begin{lemma}\label{two-components}
Let $D\subset \R^l$ be an unbounded convex domain. Then there exists at least one direction $v$ at $\infty$ for $D$.  Moreover 
\begin{enumerate}
\item either $D\setminus\overline{B(0,R)}$ has only one unbounded connected component for all $R>0$ or
\item there exists $R_0>0$ such that $D\setminus\overline{B(0,R)}$ has two unbounded connected components for all $R\geq R_0$. This is the case if and only if the only directions at $\infty$ for $D$ are $v$ and $-v$. 
\end{enumerate}
\end{lemma}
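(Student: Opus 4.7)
The plan is to prove the statement in three stages: produce a direction at infinity by a compactness argument, introduce the recession cone and split into three cases according to its shape, and in each case run a ray-plus-segment connectivity argument, first for $R$ large and then propagated to all $R>0$.

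First I would establish the existence of a direction at infinity. Fix $x_0\in D$ and $\varepsilon>0$ with $B(x_0,\varepsilon)\subset D$. Since $D$ is unbounded, pick $z_n\in D$ with $\|z_n-x_0\|\to\infty$ and set $v_n=(z_n-x_0)/\|z_n-x_0\|$; along a subsequence $v_n\to v$ with $\|v\|=1$. For any fixed $T>0$, the convex hull of $B(x_0,\varepsilon)\cup\{z_n\}$ lies in $D$ and contains $B\bigl(x_0+Tv_n,(1-T/\|z_n-x_0\|)\varepsilon\bigr)$ for $n$ large; passing to the limit gives $x_0+Tv\in D$, so $v$ is a direction at infinity.

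Next introduce the recession cone $C:=\{w\in\R^l : x_0+tw\in D\ \forall t\geq 0\}$. Standard arguments (using openness of $D$ and the fact that the interior of $\overline{D}$ equals $D$) show $C$ is a convex cone independent of the basepoint $x_0\in D$, and by construction the unit vectors in $C$ are exactly the directions at infinity. Split into three cases: (A) $C=\R_{\geq 0}v$; (B) $C=\R v$; (C) $C$ contains two linearly independent vectors. Case (B) is equivalent to "the only directions at infinity are $v$ and $-v$", so the characterization in the lemma reduces to showing that (B) yields conclusion (2) while (A) and (C) yield conclusion (1).

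The engine in every case is a compactness observation: for every $\eta>0$ there is $R_0$ such that for every $z\in D$ with $\|z\|\geq R_0$, $\dist(z/\|z\|,C\cap S^{l-1})<\eta$ (else a subsequence produces a direction at infinity outside $C\cap S^{l-1}$); hence $\la z,u\ra>0$ for some $u=u_z\in C\cap S^{l-1}$, and the ray $z+tu$ lies in $D\setminus\overline{B(0,R_0)}$ (it is in $D$ since $u\in C$, and $\|z+tu\|^2\geq\|z\|^2>R_0^2$). In case (A) the choice $u=v$ works uniformly for all $z$, and for $T$ large the segment from $z_1+Tv$ to $z_2+Tv$ stays in $D$ by convexity and outside $\overline{B(0,R_0)}$, giving connectedness of $D\setminus\overline{B(0,R_0)}$. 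In case (B), the candidate subsets $U_\pm=\{z\in D\setminus\overline{B(0,R_0)} : \pm\la z,v\ra>0\}$ are open, disjoint, nonempty (contain $x_0\pm tv$ for large $t$), cover $D\setminus\overline{B(0,R_0)}$, and each is connected by the case-(A) argument applied to $\pm v$, producing exactly two unbounded components. In case (C) I would use path-connectedness of $C\cap S^{l-1}$ (the straight segment between two unit vectors in $C$ normalizes to a path on $C\cap S^{l-1}$, and antipodal pairs admit a detour through any vector in $C\setminus\R v$), then concatenate: ray $z_1\to z_1+Tu_1$, segment to $x_0+Tu_1$, arc $s\mapsto x_0+Tu(s)$ for a path $u:[0,1]\to C\cap S^{l-1}$ from $u_1$ to $u_2$ (which sits in $D$ since $Tu(s)\in C$, hence $x_0+Tu(s)\in D$), segment to $z_2+Tu_2$, and ray back to $z_2$; each leg stays outside $\overline{B(0,R_0)}$ for $T\gg R_0$. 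Finally, to pass from "$R$ large" to "all $R>0$" in cases (A) and (C), every unbounded component of $D\setminus\overline{B(0,R)}$ contains points of arbitrarily large norm and hence contains the connected tail $D\setminus\overline{B(0,R_0)}$, forcing uniqueness of the unbounded component.

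The principal difficulty is case (C): unlike (A) and (B), where a single universal escape direction suffices, distinct points of $D\setminus\overline{B(0,R)}$ may require distinct $u_z\in C$, and one must verify both the path-connectedness of $C\cap S^{l-1}$ and that the transition arc $\{x_0+Tu(s)\}_s$ simultaneously stays inside $D$ (via the recession cone) and outside $\overline{B(0,R_0)}$ (via a norm bound uniform in $s$).
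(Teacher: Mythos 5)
Your proposal is correct and complete. Note, however, that the paper does not actually prove this lemma in-house: its ``proof'' is a one-line remark that the statement follows immediately from convexity, with a citation to Lemma 2.2 of the Bracci--Gaussier paper on the Muir--Suffridge conjecture (stated there for $\C^d$, valid verbatim in $\R^l$). So what you have done is supply the details that the paper outsources, and your route is a natural one: the compactness argument producing a direction at infinity, the recession cone $C$ with the trichotomy $C=\R_{\geq 0}v$, $C=\R v$, or $C$ containing two independent vectors, the uniform observation that far-away points of $D$ have direction $\eta$-close to $C\cap S^{l-1}$ (hence admit an escape ray $z+tu$, $u\in C$, of nondecreasing norm), and the ray--segment--arc concatenations giving one unbounded component in the first and third cases and the $U_\pm=\{\pm\la z,v\ra>0\}$ decomposition giving exactly two in the middle case, with the ``connected tail'' argument propagating uniqueness from large $R$ to all $R>0$. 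The points you flag as delicate are handled correctly: basepoint independence of the recession cone does need openness of $D$ (the same ball-plus-cone-hull argument as your Stage 1), the antipodal detour in case (C) needs a unit vector of $C$ not proportional to the given antipodal pair, which exists precisely because $C$ contains two linearly independent vectors, and the transition arc $x_0+Tu(s)$ stays outside $\overline{B(0,R_0)}$ uniformly in $s$ since $\|x_0+Tu(s)\|\geq T-\|x_0\|$. One could shorten case (B) slightly by observing that the covering $U_+\cup U_-$ already forces exactly two components once each $U_\pm$ is shown connected, but as written the argument is sound.
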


\begin{proof} This follows immediately from convexity, see Lemma 2.2 in~\cite{BrGa17} for details (in~\cite{BrGa17} the result was stated for unbounded convex domains in $\C^d$, but the proof is valid without any modification when replacing $\C^d$ with $\R^l$). \end{proof}

\begin{lemma}\label{product-two-components} Let $D \subset \R^l$ be an unbounded convex domain and $S_\infty(D) = \{ v, -v\}$ for some $v \in \mathbb{R}^l$. Let $H$ be the real orthogonal complement of $\mathbb R v$ in $\R^l$. Then there exists a bounded convex domain $\Omega \subset H$ such that $D = \Omega +\mathbb R v$.
\end{lemma}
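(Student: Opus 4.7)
The plan is to take $\Omega := \pi(D)$, where $\pi : \R^l \to H$ is the orthogonal projection along $\R v$, and to verify three things: (i) $\Omega$ is a nonempty convex open subset of $H$; (ii) $D = \Omega + \R v$; and (iii) $\Omega$ is bounded. Property (i) is routine: $\Omega$ is nonempty and convex as the image of a nonempty convex set under a linear map, and open because $\pi$ maps any Euclidean ball $B(x,r) \subset \R^l$ onto $B(\pi(x),r) \subset H$.

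For (ii), the inclusion $D \subset \Omega + \R v$ is tautological. For the reverse, given $\omega \in \Omega$ and $t \in \R$, pick $s \in \R$ with $\omega + sv \in D$; since both $v$ and $-v$ lie in $S_\infty(D)$, the observation made right after the definition of ``direction at $\infty$'' yields $\omega + sv + (t-s)v = \omega + tv \in D$, as required.

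The heart of the argument is (iii), which I would prove by contradiction. Assume $\Omega$ is unbounded and choose $\omega_n \in \Omega$ with $\|\omega_n\| \to \infty$. By (ii) we may regard $\omega_n \in D$. Fix $x_0 \in D$ and, after passing to a subsequence, assume $u_n := (\omega_n - x_0)/\|\omega_n - x_0\|$ converges to a unit vector $u \in \R^l$. Since $\pi(x_0)$ stays fixed while $\|\omega_n\| \to \infty$, one has $\|\omega_n - x_0\|/\|\omega_n - \pi(x_0)\| \to 1$, so $\pi(u)$ is a unit vector of $H$; in particular $u \notin \{v,-v\}$. For each $T > 0$, once $\|\omega_n - x_0\| > T$, the point $x_0 + Tu_n$ lies on the Euclidean segment from $x_0$ to $\omega_n$, hence in $D$ by convexity, so in the limit $x_0 + Tu \in \overline{D}$.

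The main obstacle is the standard but crucial $D$-versus-$\overline{D}$ promotion needed to conclude that $u$ actually belongs to $S_\infty(D)$. I would resolve it by invoking the elementary convexity fact that if $D$ is open and convex, $a \in D$, and $b \in \overline{D}$, then the open segment $(a,b)$ is contained in $D$: applying it with $a = x_0$ and $b = x_0 + 2Tu \in \overline{D}$ yields $x_0 + Tu \in D$ for every $T > 0$, so $u \in S_\infty(D)$. Since $\pi(u) \neq 0$ forces $u \notin \{v, -v\}$, this contradicts $S_\infty(D) = \{v,-v\}$ and completes the proof.
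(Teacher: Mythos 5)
Your proof is correct and follows essentially the same route as the paper: your $\Omega = \pi(D)$ coincides with the paper's slice $D \cap H$ (as your part (ii) shows), and boundedness is obtained in both cases by producing, from unboundedness, a direction at infinity for $D$ orthogonal to $v$, contradicting $S_\infty(D)=\{v,-v\}$. The only difference is that you construct this asymptotic direction by hand (compactness of the unit sphere plus the open-segment promotion from $\overline{D}$ to $D$), whereas the paper simply invokes the fact that an unbounded convex domain has a direction at infinity, as in Lemma \ref{two-components}.
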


\begin{proof}
 Let $\Omega:=D\cap H$. The set $\Omega$ is an open convex set in $H$, and, since every direction at $\infty$ for $\Omega$ is also a direction at $\infty$ for $H$, the set $\Omega$ must be  bounded. Take $p\in D$. Since $p+tv\in D$ for all $t\in \R$, then there exists $t_0\in \R$ such that $p':=p-t_0v\in H$. Hence, $p=p'+t_0v$. Since $p \in D$ was arbitrary, $D = \Omega +\mathbb R v$.
\end{proof}

From the last two lemmas we have the following Corollary. 

\begin{corollary}
Let $D\subset \R^l$ be an unbounded convex domain. Then $\overline{D}$ has either one or two ends.  Moreover, 
\begin{enumerate}
\item $\overline{D}$ has one end if and only if for every $R>0$ the open set $D\setminus\overline{B(0,R)}$ has only one unbounded connected component,
\item $\overline{D}$ has two ends if and only if $S_\infty(D) = \{ v, -v\}$ for some $v \in \mathbb{R}^l$.
\end{enumerate}
\end{corollary}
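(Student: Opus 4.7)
The strategy is to compute the ends of $\overline{D}$ using the natural exhaustion $K_n := \overline{D}\cap \overline{B(0,n)}$, $n\in \N$, and then to reduce the count of components of $\overline{D}\setminus K_n$ to the count of unbounded components of $D\setminus \overline{B(0,n)}$, which is already controlled by Lemma \ref{two-components}. First I would observe that any \emph{bounded} component $W$ of $\overline{D}\setminus K_n$ is absorbed in $K_m$ for every $m$ large enough (since $\overline{W}$ is then a compact subset of $\overline{D}$), hence it cannot appear in a decreasing sequence defining an end. Therefore ends of $\overline{D}$ correspond bijectively to sequences $(V_n)_{n\geq n_0}$ where $V_n$ is an \emph{unbounded} connected component of $\overline{D}\setminus \overline{B(0,n)}$ and $V_{n+1}\subset V_n$.

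The key combinatorial step is to prove that the map $V \mapsto V\cap D$ is a bijection from the set of unbounded components of $\overline{D}\setminus \overline{B(0,n)}$ onto the set of unbounded components of $D\setminus \overline{B(0,n)}$. Surjectivity is immediate, since any unbounded component of $D\setminus \overline{B(0,n)}$ is contained in some component of $\overline{D}\setminus \overline{B(0,n)}$. For injectivity one must show that $V\cap D$ is connected for each component $V$. If $V\cap D = A_1\sqcup A_2$ were a nontrivial partition into open subsets, the two closures $\overline{A_1}^V, \overline{A_2}^V$ would cover $V$ (since $V\cap D$ is dense in $V$), so by connectedness of $V$ they would meet at some $p\in V$; as $A_1,A_2$ are open in $D$, such a $p$ must lie in $\partial D$. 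Here I would exploit convexity: for $p\in\partial D$ with $\|p\|>n$, and for $\epsilon>0$ sufficiently small, $B(p,\epsilon)\cap D$ is the intersection of two convex sets, hence convex and connected, and is contained in $V\cap D$; thus this neighborhood lies entirely in $A_1$ or entirely in $A_2$, contradicting $p\in \overline{A_1}^V\cap \overline{A_2}^V$.

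With this bijection established, the corollary follows from Lemma \ref{two-components}. In case (1) of that lemma, each set $\overline{D}\setminus K_n$ has a unique unbounded component, so there is a unique decreasing sequence and hence exactly one end. In case (2), Lemma \ref{product-two-components} provides a decomposition $D=\Omega + \R v$ with $\Omega$ bounded and $\|v\|=1$; then for $n$ larger than $\sup_{\omega\in\Omega}\|\omega\|$ the two unbounded components of $\overline{D}\setminus \overline{B(0,n)}$ can be written explicitly as
\begin{equation*}
V_n^{\pm} = \{\omega + tv : \omega\in \overline{\Omega},\ \pm t > \sqrt{n^2-\|\omega\|^2}\},
\end{equation*}
and they are separated by the hyperplane $\{\langle \cdot, v\rangle = 0\}$. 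Since $V_{n+1}^{+}\subset V_n^{+}$ and $V_{n+1}^{-}\subset V_n^{-}$ while $V_n^{+}\cap V_n^{-}=\emptyset$, the two decreasing sequences $(V_n^{+})$ and $(V_n^{-})$ yield two distinct ends, and no further end is possible.

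The main obstacle is the connectedness of $V\cap D$ for each component $V$ of $\overline{D}\setminus \overline{B(0,n)}$; this is where convexity of $D$ enters in an essential way, through the local convex structure near every boundary point outside the ball.
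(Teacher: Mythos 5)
Your proposal is correct and follows essentially the same route as the paper, which deduces the corollary directly from Lemmas~\ref{two-components} and~\ref{product-two-components}; you simply make explicit the routine end-counting details the paper leaves implicit (discarding bounded components, matching unbounded components of $\overline{D}\setminus\overline{B(0,n)}$ with those of $D\setminus\overline{B(0,n)}$ via convexity, and exhibiting the two nested families $V_n^{\pm}$ in the product case). These added verifications are sound as written.
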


\subsection{The Gromov boundary and ends} 

Assume now that $D\subset \C^d$ is an unbounded $\C$-proper convex domain such that  $(D,K_D)$ is Gromov hyperbolic. Throughout the section we will let $\overline{D} \subset \C^d$ denote the closure of $D$ in $\C^d$ and $\partial D = \overline{D} \setminus D$.  

To distinguish between the Gromov compactification and the End compactification, we will write $\xi_n \overset{Gromov}{\longrightarrow} \xi$ when $\xi_n \in \overline{D}^G$ is a sequence converging to $\xi \in \overline{D}^G$. 

The main result of this section is the following. 

\begin{proposition}\label{prop:end_convergence} Suppose $x$ is an end of $\overline{D}$. Then there exists $\zeta_x \in \partial_G D$ such that: if $z_n \in D$ converges to $x$ in $\overline{D}^{\star}$, then 
\begin{align*}
z_n \overset{Gromov}{\longrightarrow} \zeta_x.
\end{align*}
Moreover, if $\overline{D}$ has two ends $x,y$, then $\zeta_x \neq \zeta_y$. 
\end{proposition}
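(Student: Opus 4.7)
The approach is to split according to Corollary~5.4 into two cases, depending on whether $\overline{D}$ has one or two ends, and in either case reduce the problem to showing independence of the Gromov limit on the sequence. In either case, if $z_n \in D$ converges to $x$ in $\overline{D}^\star$ then $\|z_n\|\to\infty$ in $\C^d$, so by Theorem~2.1 (Barth) and properness of $(D,K_D)$ we have $K_D(p_0,z_n)\to\infty$ for any fixed base point $p_0\in D$. Compactness of $\overline{D}^G$ then yields a subsequential Gromov limit $\zeta\in\partial_G D$, and the content of the proposition is that $\zeta$ is independent of the sequence (and distinct from $\zeta_y$ when $\overline{D}$ has a second end $y$).

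\textbf{One-end case.} Suppose for contradiction that sequences $z_n, w_n\to x$ in $\overline{D}^\star$ have distinct Gromov limits $\zeta\neq \eta$. By the visibility Corollary~3.6,
\[
K_D(z_n, w_n) \geq K_D(z_n, p_0) + K_D(p_0, w_n) - A
\]
for large $n$ and some $A\geq 0$. I would contradict this by exhibiting a Kobayashi-shorter path from $z_n$ to $w_n$. The one-end hypothesis ensures that for every compact $K\subset\overline{D}$, eventually both $z_n$ and $w_n$ lie in the single unbounded connected component of $\overline{D}\setminus K$; convexity then lets one route a path from $z_n$ to $w_n$ that remains close to the end (e.g.\ following the segment $[z_n,w_n]\subset D$, or detouring through far-away base directions), and the task is to bound its Kobayashi length above by $K_D(z_n,p_0)+K_D(p_0,w_n)-A-1$ using slice estimates on $K_D$.

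\textbf{Two-end case.} Lemma~5.3 gives $D=\Omega+\R v$ with $\Omega$ bounded convex, and the translation $\tau_t(z)=z+tv$ forms a one-parameter group of holomorphic automorphisms, hence of $K_D$-isometries. Since $\tau_n(p_0)=p_0+nv$ leaves every compact subset of $D$, Karlsson's theorem yields $\tau_{\pm n}(p_0)\to\xi_\pm\in\partial_G D$, and I would set $\zeta_x:=\xi_+$, $\zeta_y:=\xi_-$. For an arbitrary sequence $z_n=\omega_n+t_n v\to x$ (so $t_n\to+\infty$ and $\omega_n\in\Omega$), isometry invariance of $\tau_{t_n}$ gives
\[
K_D(z_n,\tau_{t_n}(p_0))=K_D(\omega_n,p_0).
\]
When the right-hand side stays bounded, $z_n$ and $\tau_{t_n}(p_0)$ share the Gromov limit $\xi_+$. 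The harder subcase, where $\omega_n$ escapes every compact of $(D,K_D)$, is handled by the north--south dynamics of $\tau_1$ acting as a loxodromic isometry of $\overline{D}^G$, once the separation $\xi_+\neq\xi_-$ is established.

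\textbf{Main obstacle.} The most delicate point is the separation $\xi_+\neq\xi_-$ in the two-end case. The natural idea of applying Proposition~\ref{prop:commuting} to the commuting pair $\tau_1,\tau_{-1}$ takes $\xi_+\neq\xi_-$ as a hypothesis and is therefore circular. Instead, one must argue directly: if $\xi_+=\xi_-$, then $\tau_1$ is a parabolic isometry of $(D,K_D)$ with a unique Gromov-boundary fixed point, and I would attempt to contradict this by showing that parabolicity, combined with $D=\Omega+\R v$ and convexity, forces arbitrarily large ``almost-flat strips'' in $(D,K_D)$ incompatible with $\delta$-hyperbolicity. The same type of flat-strip argument should also resolve the escaping-$\omega_n$ subcase above.
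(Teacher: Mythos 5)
There are genuine gaps at exactly the two places where the real work lies. In your one-end case, the entire argument rests on producing, for large $n$, a path from $z_n$ to $w_n$ of Kobayashi length at most $K_D(z_n,p_0)+K_D(p_0,w_n)-A-1$; but this is precisely equivalent to the assertion that the Gromov product of $z_n$ and $w_n$ at $p_0$ tends to infinity, i.e.\ to the statement being proved, and no construction or ``slice estimate'' is actually supplied (routing the segment $[z_n,w_n]$ near the end gives no upper bound of this quality without substantial analysis). The paper avoids this entirely: Lemma~\ref{lem:conG1} first proves convergence for sequences escaping in a \emph{fixed} direction $v\in S_\infty(D)$, using Karlsson's theorem for the translation $z\mapsto z+v$, Busemann-type functions $b_n(z)=K_D(z,p_n)-K_D(p_n,z_0)$, the convexity of closed Kobayashi balls in convex domains, and Corollary~\ref{cor:Gromov_Product}; then Lemma~\ref{conv2-lem} identifies the limits $\zeta_v$ for different directions by applying Proposition~\ref{prop:commuting} to the commuting translations $z\mapsto z+v$, $z\mapsto z+w$ (if $\zeta_v\neq\zeta_w$ one gets $z_1+m_kv+n_kw$ returning to a compact set, which is absurd for linearly independent $v,w$). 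You set Proposition~\ref{prop:commuting} aside as circular, but it is circular only for the separation question; for the identification step it is the key tool, and there it is applied under the contradiction hypothesis $\zeta_v\neq\zeta_w$, so there is no circularity.

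In your two-end case there are two unresolved points. First, the separation $\xi_+\neq\xi_-$ is deferred to a ``parabolic isometry forces almost-flat strips'' argument that is not carried out and is not standard: parabolic isometries are perfectly compatible with Gromov hyperbolicity (e.g.\ in $\mathbb H^2$), so any contradiction must come from the specific convex structure, which is exactly where the paper's Lemma~\ref{conv-lem} instead gives a short direct argument: using $D=\Omega+\mathbb R v$ with $\Omega$ bounded and the estimates $\norm{w}/(2\delta_D(z;w))\le k_D(z;w)\le\norm{w}/\delta_D(z;w)$, the line $t\mapsto z_0+tv$ is an $(A,0)$-quasi-geodesic, and a quasi-geodesic line has two distinct endpoints (Remark~\ref{obs:limits_of_quasi_geod}). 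Second, your ``harder subcase'' $K_D(\omega_n,p_0)\to\infty$ is not actually handled by north--south dynamics: uniform convergence $\tau_1^n\to\xi_+$ holds only on compact subsets of $\overline{D}^G\setminus\{\xi_-\}$, so you would first have to rule out that the slice sequence $\omega_n$ enters arbitrarily small neighborhoods of $\xi_-$ (or $\xi_+$), and a priori nothing you have proved excludes a sequence converging Euclidean-wise to a finite boundary point from Gromov-converging to $\xi_\pm$; ruling this out is essentially as hard as the original statement. The paper sidesteps this because Lemma~\ref{lem:conG1} applies directly to $z_n=\omega_n+t_nv$ (which satisfies $\norm{z_n}\to\infty$ and $z_n/\norm{z_n}\to v$), with no case distinction on the slice component.
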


The proof of Proposition~\ref{prop:end_convergence} will require several lemmas.

\begin{lemma}\label{lem:conG1} For any $v \in S_\infty(D)$, there exists a point $\zeta_v \in \partial_G D$ such that if $p_n \in D$ is a sequence with $\norm{p_n} \rightarrow \infty$ and $p_n / \norm{p_n} \rightarrow v$, then $p_n \overset{Gromov}{\longrightarrow} \zeta_v$. 
\end{lemma}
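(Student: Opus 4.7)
My plan is to identify $\zeta_v$ as the Gromov boundary limit of a suitable quasi-geodesic ray going to infinity in direction $v$, and then show every sequence $\{p_n\}$ as in the hypothesis converges to the same point via a scaling argument on Kobayashi distances.

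After translating $D$, assume $tv\in D$ for all $t\ge 0$. The complex slice $\C v\cap D$ corresponds, via the holomorphic embedding $w\mapsto wv$, to a convex open set $U\subset\C$ with $[0,\infty)\subset U$, and by $\C$-properness $U\neq\C$. Depending on whether $-v\in S_\infty(D)$ or not, Lemma~\ref{product-two-components} forces $U$ to be (up to biholomorphism) either a horizontal strip or a half-plane-like domain. In either case the positive real axis in $U$ is a Kobayashi geodesic after a standard reparametrization $w:[0,\infty)\to U\cap(0,\infty)$ (explicitly $w(s)=e^s$ in the half-plane case and $w(s)=s$ in the strip case). Setting $\gamma(s):=w(s)\,v\in D$, the distance-decreasing property applied to $w\mapsto wv$ yields $K_D(\gamma(s),\gamma(t))\le K_U(w(s),w(t))\asymp |s-t|$, so $\gamma$ is $1$-Lipschitz up to constants. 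The matching lower bound $K_D(\gamma(s),\gamma(t))\ge c|s-t|-C$ required to make $\gamma$ a quasi-geodesic is the technical heart of this step; it would be obtained from $\C$-properness by constructing a holomorphic map from $D$ to a hyperbolic plane domain that captures the escape along direction $v$ and is quantitatively bounded from below along $\gamma$. Once $\gamma$ is a quasi-geodesic, a one-sided version of Remark~\ref{obs:limits_of_quasi_geod} provides $\zeta_v:=\lim_{s\to\infty}\gamma(s)\in\partial_G D$.

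For the main step, fix $\{p_n\}$ as in the hypothesis and set $\lambda_n=\|p_n\|$. By the scale invariance of the Kobayashi distance, $K_D(p_n,\lambda_n v)=K_{D/\lambda_n}(p_n/\lambda_n,\, v)$. Since $v\in S_\infty(D)$ and $D$ is convex, the rescaled domains $D/\lambda_n$ contain a fixed Euclidean ball around $v$ for $n$ large (this is a direct consequence of convexity together with the definition of direction at infinity). Combined with $p_n/\lambda_n\to v$ and upper semicontinuity of the Kobayashi pseudo-metric under Hausdorff convergence of convex domains, this gives $\sup_n K_D(p_n,\lambda_n v)<\infty$. Since $\lambda_n v=\gamma(s_n)$ for a suitable $s_n\to\infty$, the triangle inequality yields $K_D(p_n,\gamma(s_n))=O(1)$, hence the Gromov product $(p_n\mid\gamma(s_n))_{x_0}\to\infty$. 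Applying Corollary~\ref{cor:Gromov_Product} in its contrapositive form forces $p_n$ to share the Gromov limit of $\gamma(s_n)$, namely $\zeta_v$.

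The main obstacle is the lower bound promoting $\gamma$ to a quasi-geodesic, together with the closely related uniform bound on $K_D(p_n,\lambda_n v)$ in the scaling step. Both rely on the interplay of convexity and $\C$-properness, and crucially on the fact that $v$ is an interior direction of the asymptotic convex cone of $D$ rather than a tangential one, which is exactly what membership in $S_\infty(D)$ encodes.
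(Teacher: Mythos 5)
There is a genuine gap, and it is in your main step. The assertion that the rescaled domains $D/\lambda_n$ contain a fixed Euclidean ball around $v$ for $n$ large is false: membership of $v$ in $S_\infty(D)$ only gives a half-infinite cylinder of some fixed radius $r$ around the ray $z_0+\R_{\geq 0}v$ inside $D$, and after rescaling by $1/\lambda_n$ this cylinder degenerates to a set of radius $r/\lambda_n\to 0$; the domains $D/\lambda_n$ can collapse onto the ray $\R_{\geq 0}v$. Worse, the conclusion you draw from it, $\sup_n K_D(p_n,\lambda_n v)<\infty$, is itself false in general. Take $d=1$ and $D=\{x+iy\in\C : x>y^2\}$ (convex, $\C$-proper, Gromov hyperbolic since it is biholomorphic to $\D$), $v=1$, and $p_n=n+i(\sqrt{n}-1)$. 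Then $\norm{p_n}\to\infty$ and $p_n/\norm{p_n}\to v$, but $\delta_D(p_n)\asymp 1$ while $\delta_D(\norm{p_n}v)\asymp\sqrt{n}$, so the standard estimate $K_D(z,w)\geq\frac12\bigl|\log(\delta_D(z)/\delta_D(w))\bigr|$ gives $K_D(p_n,\norm{p_n}v)\gtrsim\log n\to\infty$. Note that the lemma is still true in this example (both sequences converge to the single Gromov point at infinity), which shows that the whole strategy of reducing to a bounded-Kobayashi-distance comparison with the ray cannot work; one must argue differently. The paper does so by taking Busemann-type functions $b_n(z)=K_D(z,p_n)-K_D(p_n,z_0)$, using that closed Kobayashi balls in convex domains are convex (so the segments $[z_0,p_n]$ lie in $b_n^{-1}((-\infty,0])$ and in the limit the whole ray $z_0+\R_{\geq 0}v$ lies in $b^{-1}((-\infty,0])$), and then contradicting Corollary~\ref{cor:Gromov_Product} if the Gromov limit of $p_n$ differed from $\zeta_v$.

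Your construction of $\zeta_v$ also rests on an unproven claim that you yourself flag: the lower bound making the ray a quasi-geodesic. Besides being unproved, the surrounding structural claims are not correct in general: the slice $\C v\cap D$ need not be a strip or half-plane (it can be, e.g., a parabolic region, where the geodesic parametrization of the positive axis is neither linear nor exponential), Lemma~\ref{product-two-components} only applies when $S_\infty(D)=\{v,-v\}$, and the one-variable geodesic parametrization of the slice need not be comparable to the ambient Kobayashi parametrization, so the upper and lower bounds you propose would be stated in mismatched parametrizations. In the paper a quasi-geodesic statement of this type is proved only in the special case $D=\Omega+\R v$ (where boundedness of $\Omega$ gives a uniform upper bound on $\delta_D$), and the existence of $\zeta_v$ is obtained much more cheaply: the translation $z\mapsto z+v$ is a holomorphic, hence $1$-Lipschitz, self-map of $D$ with unbounded orbits, so Karlsson's theorem immediately provides a single point $\zeta_v\in\partial_G D$ with $z+nv\overset{Gromov}{\longrightarrow}\zeta_v$ for all $z$. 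You should replace both halves of your argument accordingly.
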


\begin{proof} First consider the map 
\begin{align*}
z \in D \mapsto z+v \in D.
\end{align*}
Then by Karlsson's Theorem~\ref{thm:gromov_wd} there exists some $\zeta_v \in \partial_G D$ such that: 
\begin{align*}
z+nv \overset{Gromov}{\longrightarrow} \zeta_v
\end{align*}
for all $z \in D$. 

Now fix a sequence $p_n \in D$ with $\norm{p_n} \rightarrow \infty$ and $p_n / \norm{p_n} \rightarrow v$. Assume for a contradiction that $p_n$ does not converge to $\zeta_v$ in $\overline{D}^G$. Then by passing to a subsequence we can suppose that $p_{n} \overset{Gromov}{\longrightarrow}\xi \in \partial_G D$, with $\xi \neq \zeta_v$.

\vspace{1mm}
Consider, for $n \geq 0$,  the function $b_{n} : D \rightarrow \mathbb{R}$ defined by 
\begin{align*}
b_{n}(z) = K_D(z,p_{n}) - K_D(p_{n},z_0).
\end{align*}
Since each $b_{n}$ is $1$-Lipschitz with respect to the Kobayashi metric and $b_{n}(z_0)=0$, by passing to a subsequence we can suppose that $b_{n}$ converges uniformly on compacta to some function $b$.

\vspace{1mm}
\noindent{\bf Claim.} For each $n$, the set $b_{n}^{-1}( (-\infty,0])$ is convex.

\begin{proof}[Proof of Claim]
By Proposition 3.2 in~\cite{BS2009} for every $z \in D$ and every $r > 0$, the closed metric ball 
\begin{align*}
\overline{B^K_{D}(z,r)}:=\{w \in D:\ K_D(z,w) \leq r\},
\end{align*}
with center $z$ and radius $r$ is a closed convex subset of $D$. In particular, the set $b_{n}^{-1}( (-\infty,0])=\overline{B^K_{D}(p_{n},K_D(p_{n},z_0))}$ is convex for every $n \geq 0$.
\end{proof}

\vspace{1mm}
Consequently, for every $n \geq 0$, the set $b_{n}^{-1}( (-\infty,0])$ contains the line segment 
\begin{align*}
[z_0, p_{n}]=\{ tz_0 + (1-t)p_n : 0 \leq t \leq 1\}.
\end{align*}
 Since $\lim_{n \rightarrow \infty}(p_{n} /\norm{p_{n}}) = v$, then the set $b^{-1}((-\infty, 0])$ contains the real line 
\begin{align*}
z_0 + \mathbb{R}_{\geq 0} \cdot v.
\end{align*}
Let, for every $m \geq 0$, $z_{m}:=z_0 + m v$. Since $\xi \neq \zeta_v$, by Corollary~\ref{cor:Gromov_Product} there exists some $M >0$ such that (after extracting subsequences if necessary): 
\begin{align*}
K_\Omega(z_m,p_{n}) \geq K_\Omega(z_m,z_0) +K_\Omega(z_0,p_{n}) -M
\end{align*}
for every $n \geq 0$ and $m \geq 0$.
Then we have, for every $m \geq 0$:
\begin{align*}
0 \geq b(z_m) = \lim_{n \rightarrow \infty} K_\Omega(z_m,p_{n}) - K_\Omega(p_{n},z_0) \geq K_\Omega(z_m,z_0)-M.
\end{align*}
Since $K_\Omega(z_m,z_0) \rightarrow \infty$, we obtain a contradiction. 
\end{proof}

\begin{lemma}\label{conv2-lem} Suppose that $\overline{D}$ has one end. Then $\zeta_v = \zeta_{w}$ for all $v,w \in S_\infty(D)$. 
\end{lemma}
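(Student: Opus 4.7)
The strategy is to argue by contradiction using the two commuting translations $T,S\colon D\to D$ defined by $T(z)=z+v$ and $S(z)=z+w$. By the remark following the definition of $S_\infty(D)$, these are well-defined holomorphic self-maps, hence $1$-Lipschitz for $K_D$; moreover $(D,K_D)$ is a proper geodesic Gromov hyperbolic metric space by hypothesis and Theorem \ref{thm:barth}. Their iterates at the base point are $T^n(z_0)=z_0+nv$ and $S^n(z_0)=z_0+nw$, which by Lemma \ref{lem:conG1} converge in $\overline{D}^G$ to $\zeta_v$ and $\zeta_w$ respectively. Assuming $\zeta_v\neq\zeta_w$, I would apply Proposition \ref{prop:commuting} to produce a compact set $K\subset D$ such that for every $m\geq 0$ there exist $n(m)\geq 0$ and $p_m\in K$ with
\[
T^m S^{n(m)}(p_m)=p_m+mv+n(m)w\in K.
\]

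The heart of the argument is then purely Euclidean. Setting $C=2\sup_{p\in K}\|p\|$, the above inclusion yields $\|mv+n(m)w\|\leq C$ for every $m\geq 0$. Letting $\alpha=v\cdot w\in[-1,1]$ denote the real inner product (viewing $\C^d\cong\R^{2d}$) and completing the square,
\[
\|mv+nw\|^2 = (n+m\alpha)^2 + m^2(1-\alpha^2)\geq m^2(1-\alpha^2),
\]
so $m^2(1-\alpha^2)\leq C^2$ for all $m\geq 0$. When $v\neq\pm w$ we have $\alpha\in(-1,1)$, and this bounds $m$, contradicting its arbitrariness. Since $v=w$ trivially gives $\zeta_v=\zeta_w$, this proves the lemma whenever $v\neq -w$.

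To dispose of the remaining case $w=-v$, I would invoke the one-end hypothesis: by the corollary following Lemma \ref{product-two-components}, $S_\infty(D)\neq\{v,-v\}$, so there exists $u\in S_\infty(D)\setminus\{v,-v\}$. Then $u\neq\pm v$ and $u\neq\pm w$, and applying the previous step to the pairs $(u,v)$ and $(u,w)$ gives $\zeta_v=\zeta_u=\zeta_w$. The only substantive ingredient is Proposition \ref{prop:commuting}, which is precisely what converts the Gromov-boundary separation $\zeta_v\neq\zeta_w$ into the quantitative interior recurrence at unbounded times; everything else is elementary inner-product algebra together with the topological dichotomy provided by Lemma \ref{product-two-components}.
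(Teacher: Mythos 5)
Your proof is correct and follows essentially the same route as the paper: the same commuting translations $z\mapsto z+v$, $z\mapsto z+w$, Lemma~\ref{lem:conG1} to identify their Denjoy--Wolff points, Proposition~\ref{prop:commuting} to force $\|mv+n(m)w\|$ to stay bounded, and the same reduction of the $w=-v$ case to a third direction $u$ supplied by the one-end hypothesis. The only difference is that you make the final contradiction quantitative via the inner-product estimate, where the paper simply invokes linear independence of $v$ and $w$ (equivalent, for unit vectors, to your condition $v\neq\pm w$).
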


\begin{proof} We first consider the case where $v,w \in S_\infty(D)$ are linearly independent over $\mathbb{R}$. Suppose for a contradiction that $\zeta_v \neq \zeta_w$. Consider the maps $f,g : D \rightarrow D$ defined by 
\begin{align*}
f(z) = z+v \text{ and } g(z) = z+w.
\end{align*}
Then $f^m(z) \overset{Gromov}{\longrightarrow} \zeta_v$ and $g^n(z) \overset{Gromov}{\longrightarrow} \zeta_w$ for all $z \in D$ by Lemma~\ref{lem:conG1}. So by Proposition~\ref{prop:commuting} there exist $m_k, n_k \rightarrow \infty$ such that 
\begin{align*}
\lim_{k \rightarrow \infty} f^{m_k}g^{n_k}(z_1) = z_2
\end{align*}
for some $z_1,z_2 \in D$. But 
\begin{align*}
f^{m_k}g^{n_k}(z_1) = z_1+m_kv+n_kw
\end{align*}
and $v,w$ are linearly independent. Therefore, this is impossible and $\zeta_v = \zeta_w$. 

Now if $v,w \in S_\infty(D)$ are linearly dependent over $\mathbb{R}$ and distinct, then $w=-v$. Since $\overline{D}$ has one end, there exists some $u \in S_\infty(D)$ such that $u,v$ are linearly independent over $\mathbb{R}$. Then $\zeta_v = \zeta_u = \zeta_w$. 
\end{proof}

\begin{lemma}\label{conv-lem} Suppose that $\overline{D}$ has two ends, that is $S_\infty(D) = \{ v, -v\}$ for some $v \in \mathbb{C}^d$. Then $\zeta_v \neq \zeta_{-v}$. 
\end{lemma}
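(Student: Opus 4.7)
The plan is to show that the real line $\gamma(t) := z_0 + tv$ is a quasi-geodesic line in $(D, K_D)$. Granting this, Remark \ref{obs:limits_of_quasi_geod} gives distinct limits $\lim_{t \to \pm\infty} \gamma(t)$ in $\partial_G D$, and by Lemma \ref{lem:conG1} these limits are exactly $\zeta_v$ and $\zeta_{-v}$, yielding the claim. By Lemma \ref{product-two-components}, I write $D = \Omega + \R v$ with $\Omega \subset H := (\R v)^{\perp}$ a bounded convex domain in the real orthogonal complement of $\R v$, and I fix $z_0 \in \Omega$.

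The Lipschitz upper bound is easy: the real translation $z \mapsto z+sv$ is a biholomorphism of $D$, hence an isometry of $(D, K_D)$. Bounding the Kobayashi length of the affine segment $s \mapsto z_0+sv$, $s \in [0,t]$, yields $K_D(z_0, z_0+tv) \leq k_D(z_0;v)\,|t|$, which combined with translation invariance gives $K_D(\gamma(s), \gamma(t)) \leq C|t-s|$ with $C := k_D(z_0;v)$.

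The main step is the matching linear lower bound $K_D(z_0, z_0+tv) \geq c|t| - C'$ for some $c > 0$. I will obtain it via a suitable holomorphic map to $\D$. The crucial observation is that $iv \in H$, since $\Re \langle iv, v\rangle_{\C} = 0$; this gives an orthogonal decomposition $H = \R \cdot iv \oplus W$ with $W := (\C v)^{\perp_\C}$. Let $\psi : \C^d \to \C$ be the $\C$-linear projection onto $\C v$, normalized so that $\psi(v) = 1$. Every $z \in D$ can be written as $z = \alpha \, iv + w + sv$ with $\alpha \in \R$, $w \in W$, $s \in \R$, and $\alpha\, iv + w \in \Omega$; applying $\psi$ yields $\psi(z) = s + i\alpha$. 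Since $\Omega$ is bounded, $|\alpha| < M$ for some $M > 0$, so $\psi(D)$ is an open convex subset of the strip $\{|\Im \zeta| < M\}$ that contains the full horizontal line $\psi(z_0) + \R$. A short convexity argument then identifies $\psi(D)$ with an actual horizontal strip. Composing $\psi$ with a Riemann map $\psi(D) \to \D$ produces a holomorphic $h : D \to \D$, and the distance-decreasing property of $h$, together with the standard fact that the Kobayashi (hyperbolic) distance along a horizontal direction in a horizontal strip grows linearly in the real displacement, delivers the required lower bound.

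Combining the two bounds with translation invariance shows that $\gamma$ is a quasi-geodesic line, finishing the proof. The hard part is the lower bound, in particular the construction of the projection $\psi$ whose image is constrained to a horizontal strip --- this is where the two-ends hypothesis $S_\infty(D) = \{v,-v\}$ plays a decisive role, both through the product structure of Lemma \ref{product-two-components} and through the fact that $iv$ is forced into $H$, which prevents $\psi(D)$ from being all of $\C$.
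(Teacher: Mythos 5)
Your proposal is correct, and its skeleton is the same as the paper's: show that $t \mapsto z_0 + tv$ is a quasi-geodesic line, then use Remark~\ref{obs:limits_of_quasi_geod} together with Lemma~\ref{lem:conG1} to identify the two distinct endpoints with $\zeta_v$ and $\zeta_{-v}$. Where you genuinely differ is the lower bound. The paper combines the convex-domain estimate $k_D(z;w)\ge \norm{w}/(2\delta_D(z;w))$ of \cite{BP} with the uniform bound $\delta_D(z;w)\le\alpha$ coming from the product structure of Lemma~\ref{product-two-components}, so that the Kobayashi length of \emph{any} curve dominates a fixed multiple of its Euclidean length, giving $K_D(\sigma(a),\sigma(b))\ge \frac{1}{2\alpha}(b-a)$. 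You instead push $D$ forward by the $\C$-linear orthogonal projection $\psi$ onto $\C v$: writing $D=\Omega+\R v$ with $\Omega$ bounded, $\psi(D)$ lies in a horizontal strip $\{\abs{\Im\zeta}<M\}$ and contains the horizontal line through $\psi(z_0)$, and the contraction property of holomorphic maps plus the hyperbolic metric of a strip yields $K_D(z_0,z_0+tv)\ge c\abs{t}$ with $c>0$ depending only on $M$; together with the translation-invariance upper bound $K_D(z_0,z_0+tv)\le k_D(z_0;v)\abs{t}$ this makes $\gamma$ a quasi-geodesic. Your route is a bit more self-contained (no infinitesimal two-sided estimate on $k_D$ is needed, only Schwarz--Pick and planar hyperbolic geometry), while the paper's is shorter once \cite{BP} is granted. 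Two minor remarks: you do not actually need the convexity argument identifying $\psi(D)$ with an exact strip, since the inclusion $\psi(D)\subset\{\abs{\Im\zeta}<M\}$ is itself distance-decreasing and already gives the linear lower bound; and the observation that $iv\in H$ is automatic ($\Re\langle iv,v\rangle=0$ for any $v$), so the two-ends hypothesis enters only through the boundedness of $\Omega$ in Lemma~\ref{product-two-components}, which is what keeps $\psi(D)$ inside a proper strip rather than all of $\C$.
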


\begin{proof} We start by establishing the following claim: 

\vspace{1mm}
\noindent{\bf Claim.} Let $z_0 \in D$. Then there exist $A>1$ such that the curve $\sigma : \mathbb{R} \rightarrow D$ given by $\sigma(t) = z_0 + tv$ is an $(A,0)$-quasi-geodesic.

\begin{proof}[Proof of the claim.] For $z \in D$ and $w \in \mathbb{C}^d$, let 
\begin{align*}
\delta_D(z;w): =\inf\{ \norm{z-u} : u \in \partial D \cap (z + \mathbb{C} \cdot w) \}.
\end{align*}
The usual estimates on the Kobayashi metric of convex domains (see, {\sl e.g.}, \cite{BP}) give
\begin{align*}
\frac{ \norm{w}}{2\delta_D(z;w)} \leq k_D(z;w) \leq \frac{ \norm{w}}{\delta_D(z;w)}
\end{align*}
for all $z \in D$ and $w \in \mathbb{C}^d$.

Since $D + tv = D$ for all $t \in \mathbb{R}$, we see that 
\begin{align*}
\delta_D(z;w) = \delta_D(z+tv;w)
\end{align*}
for all $z \in D$, $w \in \mathbb{C}^d$ and $t \in \mathbb{R}$. This implies that 
\begin{align*}
\delta_D(\sigma(t); \sigma^\prime(t)) = \delta_D(z_0+tv; v) = \delta_D(z_0;v)
\end{align*} 
for all $t \in \mathbb{R}$. Further, by Lemma \ref{product-two-components} there exists $\alpha > 0$ such that 
\begin{align*}
\delta_D(z;w)  \leq \alpha
\end{align*}
for all $z \in D$ and $w \in \mathbb{C}^d$.

Now fix $a \leq b$. Then, 
\begin{align*}
K_D(\sigma(a), \sigma(b)) \leq\int_a^b k_D(\sigma(t);\sigma'(t))dt \leq \int_a^b \frac{ \norm{\sigma^\prime(t)}}{\delta_D(\sigma(t); \sigma^\prime(t))} dt = \frac{1}{\delta_D(z_0;v)}(b-a).
\end{align*}
Now take any $C^1$-curve $\gamma : [0,1] \rightarrow D$ such that $\gamma(0) = \sigma(a) = z_0+av$ and $\gamma(1) = \sigma(b) = z_0 + bv$.  Then 
\begin{align*}
\ell_D(\gamma) \geq \int_0^1 \frac{ \norm{ \gamma^\prime(t)}}{2\delta_D(\gamma(t); \gamma^\prime(t))} dt \geq \frac{1}{2\alpha} \int_0^1 \norm{ \gamma^\prime(t)} dt \geq \frac{1}{2\alpha} \norm{ \gamma(1)-\gamma(0)} = \frac{1}{2\alpha}(b-a).
\end{align*}
Since $\gamma$ was an arbitrary $C^1$ curve joining $\sigma(a)$ to $\sigma(b)$ we see that 
\begin{align*}
K_D(\sigma(a),\sigma(b)) \geq \frac{1}{2\alpha}(b-a).
\end{align*}
The previous estimates show that $\sigma$ is a $(A,0)$-quasi-geodesic for some $A>1$. 
\end{proof} 

Then Remark~\ref{obs:limits_of_quasi_geod} implies that the (Gromov) limits 
\begin{align*}
\lim_{t \rightarrow \infty} \sigma(t) \text{ and } \lim_{t \rightarrow -\infty} \sigma(t)
\end{align*}
exist in $\partial_G D$ and are distinct. So $\zeta_v\neq \zeta_{-v}$.
\end{proof}

\begin{proof}[Proof of Proposition~\ref{prop:end_convergence}]
Follows immediately from Lemmas~\ref{lem:conG1},~\ref{conv2-lem}, and~\ref{conv-lem}. 
\end{proof}

\section{Proof of Theorem~\ref{ext-thm}.}\label{Sec:6}

We begin the proof of Theorem~\ref{ext-thm} by establishing some basic properties of geodesics in $(D,K_D)$ when $D$ is convex and $(D,K_D)$ is Gromov hyperbolic. 

Throughout the section we will let $\overline{D} \subset \C^d$ denote the closure of $D$ in $\C^d$ and $\partial D = \overline{D} \setminus D$. Also, for $z,w \in \C^d$ let 
\begin{align*}
[z,w] = \{ tz + (1-t)w : 0 \leq t \leq 1\}
\end{align*}
denote the Euclidean line segment joining them. 

\begin{lemma}\label{infty-lem}
Let $D \subset \C^d$ be an unbounded $\C$-proper convex  domain. If $z_{n}, w_{n} \subset D$ are sequences with $\lim_{n \rightarrow \infty}\|z_{n}\| = \infty$ and $\lim_{n \rightarrow \infty}w_{n} =\xi \in \partial D$, then $\lim_{n \rightarrow \infty}K_{D}(z_{n},w_{n}) = \infty$.
\end{lemma}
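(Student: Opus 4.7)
The plan is to reduce everything to the Poincaré distance on a right half-plane via $\C$-properness. First, any real affine functional on $\R^{2d} \cong \C^d$ can be written as $\Re \phi$ for some complex affine $\phi : \C^d \to \C$; since $D$ is convex it is an intersection of open real half-spaces, so $D = \bigcap_\alpha \{\Re \phi_\alpha > 0\}$ for complex affine functionals $\phi_\alpha$. If the common kernel $\bigcap_\alpha \ker L_\alpha$ of the complex linear parts $L_\alpha$ contained a nonzero vector $v$, then for any $z_0 \in D$ the complex line $z_0 + \C v$ would lie in $D$, contradicting $\C$-properness. Hence $\{L_\alpha\}$ spans $(\C^d)^*$, and I may select indices so that $L_1,\ldots,L_d$ form a basis. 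Then $\Phi := (\phi_1,\ldots,\phi_d) : \C^d \to \C^d$ is a complex affine isomorphism and $\Phi(D) \subset H^d$, where $H := \{s \in \C : \Re s > 0\}$.

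The contraction property of the Kobayashi pseudodistance applied to the holomorphic map $\Phi|_D : D \to H^d$, combined with the standard product formula $K_{H^d} = \max_i K_H$, yields
$$
K_D(z,w) \,\geq\, \max_{1 \le i \le d} K_H\bigl(\phi_i(z), \phi_i(w)\bigr) \qquad \text{for all } z,w \in D.
$$
Since $\{L_i\}_{i=1}^d$ is a basis, $\|z_n\| \to \infty$ forces $\max_i |\phi_i(z_n)| \to \infty$, so after passing to a subsequence there is a fixed index $i_0$ with $\phi_{i_0}(z_n) \to \infty$ in the Riemann sphere; meanwhile $\phi_{i_0}(w_n) \to \phi_{i_0}(\xi)$, a \emph{finite} point of $\overline{H}$.

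To finish I would transport to the unit disc via the Cayley transform $C(s) := (s-1)/(s+1)$. Then $C(\phi_{i_0}(z_n)) \to C(\infty) = 1 \in \partial \D$, while $C(\phi_{i_0}(w_n)) \to C(\phi_{i_0}(\xi)) \in \overline{\D} \setminus \{1\}$. Thus the two image sequences converge to distinct points of $\overline{\D}$, which forces $K_H(\phi_{i_0}(z_n), \phi_{i_0}(w_n)) = K_\D\bigl(C(\phi_{i_0}(z_n)), C(\phi_{i_0}(w_n))\bigr) \to \infty$: by properness of the Poincaré ball if $C(\phi_{i_0}(\xi)) \in \D$, and because distinct limits on $\partial \D$ are distinct Gromov boundary points of $\D$ otherwise. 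Returning to the estimate above, this proves $K_D(z_n,w_n) \to \infty$.

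I do not anticipate any real difficulty; the only conceptual step is recognizing that $\C$-properness is precisely the spanning condition on the complex linear parts of the supporting functionals, which unlocks the coordinate-projection to a polystrip and reduces the claim to the elementary one-variable fact that two sequences converging to distinct points of $\overline{\D}$ have diverging Poincaré distance.
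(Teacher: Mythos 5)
Your proof is correct and follows essentially the same route as the paper: embed $D$ by a complex affine isomorphism into a product of half-planes and conclude via the contraction property of the Kobayashi distance, the paper simply citing Frankel \cite[Theorem 7.6]{Fr91} for the embedding where you derive it from $\C$-properness by a Hahn--Banach/spanning argument. The extra detail you supply (the selection of a basis of linear parts and the explicit one-variable endgame in $\mathbb{H}^d$ via the Cayley transform) is exactly what the paper leaves to the citation and to the reader, so there is no substantive difference in approach.
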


\begin{proof}
 According to Theorem 7.6 in~\cite{Fr91} there is a complex affine isomorphism $A:\C^d\rightarrow \C^d$ such that $A(D) \subset \mathbb H^d$, where $\mathbb H:=\{z \in \C:\ \Re(z) < 0\}$. Then:
$$
K_D(z_{n},w_{n}) = K_{A(D)}(Az_n,Aw_n) \geq K_{\mathbb H^d}(Az_n,Aw_n)\rightarrow \infty
$$
as $n \rightarrow \infty$.
\end{proof}

\begin{lemma}\label{lem:bd_dist} Let $D$ be a $\C$-proper convex domain in $\C^d$ and suppose that $(D,K_D)$ is  Gromov hyperbolic. If $z_n,w_n \in D$ are sequences with $\lim_{n \rightarrow \infty} z_n = \xi \in \partial D$ and 
\begin{align*}
\sup K_D(z_n, w_n) < + \infty,
\end{align*}
then $w_n \rightarrow \xi$. 
\end{lemma}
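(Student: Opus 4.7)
My plan is a proof by contradiction carried out in three stages: (1) show that $\{w_n\}$ stays Euclidean bounded; (2) show that every Euclidean accumulation point of $\{w_n\}$ lies in $\partial D$; (3) show that the only such accumulation point is $\xi$ itself.

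Stages (1) and (2) are straightforward. For (1), if a subsequence satisfies $\|w_{n_k}\|\to\infty$, then since $z_{n_k}\to\xi\in\partial D$ is itself bounded, Lemma~\ref{infty-lem} (applied with the two sequences interchanged) forces $K_D(w_{n_k},z_{n_k})\to\infty$, contradicting the hypothesis. For (2), if $w_{n_k}\to\eta\in D$, continuity of the Kobayashi distance and the triangle inequality give $K_D(z_{n_k},\eta)\le M+o(1)$; but $(D,K_D)$ is a proper metric space by Theorem~\ref{thm:barth}, so the escape of $z_{n_k}$ to the topological boundary forces $K_D(z_{n_k},\eta)\to\infty$, a contradiction.

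For stage (3), suppose for contradiction that a subsequence satisfies $w_{n_k}\to\eta\in\partial D$ with $\eta\ne\xi$. Convexity of $D$ produces, for any boundary point, complex affine functionals $L\colon\C^d\to\C$ whose real part defines a supporting hyperplane; choose such an $L$ at $\xi$ and normalize so that $L(\xi)=0$, so $L$ is a holomorphic map $D\to\mathbb H^-:=\{\mathrm{Re}\,\zeta<0\}$. The contraction property of the Kobayashi pseudo-distance then gives
\begin{equation*}
M\ge K_D(z_n,w_n)\ge K_{\mathbb H^-}(L(z_n),L(w_n)).
\end{equation*}
An explicit computation with the Poincar\'e formula on $\mathbb H^-$, of exactly the same flavour as the one used in Lemma~\ref{conv-lem}, shows that if $a_n\to 0$ and $K_{\mathbb H^-}(a_n,b_n)$ stays bounded then $b_n\to 0$. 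Applied with $a_n=L(z_n)\to 0$, this yields $L(w_{n_k})\to 0$, hence $L(\eta)=0$. Since $L$ was an arbitrary complex affine function supporting $\overline D$ at $\xi$, the point $\eta$ lies in every complex supporting hyperplane of $\overline D$ at $\xi$.

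The delicate final step is to conclude $\eta=\xi$. Writing $V=\bigcap_L\ker L$ for the complex linear subspace cut out by all such $L$'s, we have $\eta-\xi\in V$; since $\mathrm{Re}\,L\equiv 0$ on $\xi+V$ for every such $L$ while $\mathrm{Re}\,L<0$ on $D$, the slice $(\xi+V)\cap\overline D$ sits inside $\partial D$. If $\eta\ne\xi$ this slice contains the full real segment $[\xi,\eta]$, lying in the complex affine line $\xi+\C(\eta-\xi)$ tangent to $\overline D$ at $\xi$ in the complex sense. I expect this to be the main obstacle: ruling such a configuration out requires invoking Zimmer's structural description of Gromov hyperbolicity for $\C$-proper convex domains~\cite{Zi16,Zi17}, which forbids non-trivial complex-affine pieces in $\partial D$ at any boundary point. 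With that structural input one obtains $\eta=\xi$, contradicting the choice of $\eta$, and the lemma follows.
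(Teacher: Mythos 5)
Your stages (1) and (2) are fine, and so is the projection step: a supporting functional $L$ at $\xi$ maps $D$ holomorphically into the half-plane $\{\Re \zeta<0\}$, the Kobayashi distance decreases, and since a hyperbolic ball of bounded radius centred at a point tending to a finite boundary point of the half-plane has Euclidean diameter tending to $0$, you correctly get $L(\eta)=0$ for every such $L$, hence $[\xi,\eta]\subset\partial D$ with $\eta-\xi$ complex-tangential at $\xi$. The genuine gap is the final step. What you have produced is only a \emph{real} segment in $\partial D$ lying in complex-tangential directions; this is not an analytic disk, and the structural fact you appeal to -- \cite[Theorem 3.1]{Zi16}, which forbids non-trivial analytic disks in the boundary of a Gromov hyperbolic convex domain -- says nothing about such segments. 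Indeed your configuration is perfectly compatible with Gromov hyperbolicity: the paper's own Example~\ref{ex}, $D=\{(z_0,z):\Im z_0>\|z\|\}$, is Gromov hyperbolic, the ray through $\xi=(i,1,0,\dots,0)$ lies in $\partial D$, and the (unique up to positive scalar) supporting functional $L(z_0,z)=z_1+iz_0$ at $\xi$ vanishes on that whole ray, so with $\eta=2\xi$ one has exactly the situation you reach. Consequently no contradiction can be extracted from that configuration, and the proof cannot be completed along these lines.

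The information is lost precisely when you pass to a single one-dimensional projection: after that, all that remains of the hypothesis $\sup_n K_D(z_n,w_n)<\infty$ is $L(\eta)=0$. The paper keeps the bounded-distance hypothesis intact and quotes Zimmer's Proposition 3.5 of \cite{Zi17}: for a $\mathbb{C}$-convex (in particular convex) domain, two sequences at bounded Kobayashi distance converging to \emph{distinct} boundary points force a non-trivial complex affine disk in $\partial D$; only then does \cite[Theorem 3.1]{Zi16} yield the contradiction with Gromov hyperbolicity. That proposition uses the bounded distance in all complex directions simultaneously, which is exactly what your supporting-functional argument discards; to repair your proof you would need to prove or cite a statement of that strength rather than the ``no complex-affine pieces in the boundary'' principle.
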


\begin{proof}
Since $\overline{D}^{\star}$ is compact we can assume that $w_n \rightarrow \eta$ for some $\eta \in \overline{D}^{\star}$. By Lemma~\ref{infty-lem} we must have $\eta \in \partial D$. Suppose for a contradiction that $\xi \neq \eta$. Since every convex domain is also $\mathbb{C}$-convex and 
\begin{align*}
\sup K_D(z_n, w_n) < + \infty,
\end{align*}
Proposition 3.5 in~\cite{Zi17} implies that $\partial D$ contains a complex affine disk in its boundary. However, since $(D,K_D)$ is Gromov hyperbolic, \cite[Theorem 3.1]{Zi16} says that $\partial D$ does not contain any non trivial analytic disc. This is a contradiction. 
\end{proof}

\begin{lemma} Let $D$ be a $\C$-proper convex domain in $\C^d$ and suppose that $(D,K_D)$ is  Gromov hyperbolic. If $\sigma: [0,+\infty) \rightarrow D$ is a geodesic ray, then 
$
\lim_{t \rightarrow \infty} \sigma(t)
$
exists in $\overline{D}^{\star}$. 
\end{lemma}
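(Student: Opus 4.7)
My plan is to study the $\omega$-limit set of $\sigma$ in the Euclidean end compactification. Set
\[
\omega(\sigma) := \bigcap_{T\ge 0}\overline{\sigma([T,+\infty))}\subset \overline{D}^{\star},
\]
with closures in $\overline{D}^{\star}$. Since $\overline{D}^{\star}$ is compact Hausdorff and the sets $\overline{\sigma([T,+\infty))}$ form a decreasing nested family of non-empty compact connected subsets, $\omega(\sigma)$ is non-empty, compact, and connected. The lemma then reduces to showing $\omega(\sigma)$ is a single point.

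I would first check $\omega(\sigma)\subset \overline{D}^{\star}\setminus D$. By Theorem~\ref{thm:barth} the space $(D,K_D)$ is proper, so every $z\in D$ has a $K_D$-compact Euclidean neighborhood $V\subset D$; since $K_D(\sigma(0),\sigma(t))=t\to+\infty$ the ray eventually leaves $V$, hence $z\notin\omega(\sigma)$. To prove $\omega(\sigma)$ is a singleton I would argue by contradiction: if $\xi\neq\eta$ both belong to $\omega(\sigma)$, choose disjoint open neighborhoods $U_\xi,U_\eta\subset \overline{D}^{\star}$ with $\overline{U_\xi}\cap\overline{U_\eta}=\emptyset$, and extract sequences $t_n<s_n\to+\infty$ with $\sigma(t_n)\in U_\xi$ and $\sigma(s_n)\in U_\eta$. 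The target is to show that the geodesic arcs $\sigma|_{[t_n,s_n]}$ must all pass through a common $K_D$-compact set $K\subset D$: any $r_n\in[t_n,s_n]$ with $\sigma(r_n)\in K$ then satisfies $r_n\ge t_n\to+\infty$, contradicting the boundedness of $r_n=K_D(\sigma(0),\sigma(r_n))$ that $\sigma(r_n)\in K$ forces.

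The main obstacle is establishing this compact-passage property, which amounts to a form of Euclidean visibility between $\xi$ and $\eta$ in $\overline{D}^{\star}\setminus D$. A key preliminary observation is that both $\sigma(t_n)$ and $\sigma(s_n)$ converge in the Gromov boundary to the unique endpoint $\xi_\sigma\in\partial_G D$ of $\sigma$, since the Gromov product at $\sigma(0)$ equals $\min(t_n,s_n)\to+\infty$. To close the contradiction I plan to combine: Lemma~\ref{lem:bd_dist} (which prevents approaching distinct points of $\partial D$ with bounded $K_D$-distance, and hence handles the case $\xi,\eta\in\partial D$); Proposition~\ref{prop:end_convergence} (providing the corresponding statement when $\xi$ or $\eta$ is an end of $\overline{D}$, via the distinct Gromov points $\zeta_\xi,\zeta_\eta$); the Shadowing Lemma~\ref{thm:shadow_lemma} applied to the Euclidean chords $[\sigma(t_n),\sigma(s_n)]$, which by convexity are contained in $D$ and are uniform quasi-geodesics in $(D,K_D)$ through Kobayashi-metric estimates of the same type as used in the proof of Lemma~\ref{conv-lem}; and Gromov visibility (Theorem~\ref{thm:visible}) applied at $\xi_\sigma$, which produces a Gromov-compact set that the quasi-geodesic chords must meet. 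The most delicate sub-case is the mixed one, where exactly one of $\xi,\eta$ is an end while the other lies in $\partial D$; here I would exploit the convexity of the horoballs for the Busemann function associated to $\sigma$ (which are convex in $D$ because Kobayashi balls in convex domains are convex) to produce the required compact set $K$ and complete the proof.
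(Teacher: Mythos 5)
Your setup (the limit set $\omega(\sigma)$ is compact, connected, disjoint from $D$ by properness, and one argues by contradiction) matches the start of the paper's proof, but the heart of your argument --- that the arcs $\sigma|_{[t_n,s_n]}$, or the chords $[\sigma(t_n),\sigma(s_n)]$, must meet a fixed Kobayashi-compact subset of $D$ --- is exactly the difficult point, and the tools you list do not deliver it. Theorem~\ref{thm:visible} is inapplicable: it requires two \emph{distinct} points of $\partial_G D$, while, as you yourself note, both $\sigma(t_n)$ and $\sigma(s_n)$ converge to the \emph{same} Gromov point $[\sigma]$; in fact, by Theorem~\ref{thm:shadow_lemma} the chords stay within bounded $K_D$-distance of the arcs $\sigma|_{[t_n,s_n]}$, which eventually leave every compact set, so no such ``visibility compact'' can exist and the contradiction cannot be produced this way. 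Moreover, the claim that chords whose \emph{two} endpoints degenerate to the boundary are uniformly quasi-geodesic ``by the same estimates as in Lemma~\ref{conv-lem}'' is unjustified: that proof uses the translation invariance $D+tv=D$, which is unavailable here; and even granting uniformity, the compact-passage conclusion fails when the Euclidean segment $[\xi,\eta]$ lies in $\partial D$, which Gromov hyperbolicity does not rule out (it only excludes analytic disks; e.g.\ the boundary of the tube domain in Example~\ref{ex} contains real lines). The end and mixed cases are only a plan (``convexity of horoballs''), with no argument, and Lemma~\ref{lem:bd_dist} cannot be applied directly to $\sigma(t_n),\sigma(s_n)$ since $K_D(\sigma(t_n),\sigma(s_n))=s_n-t_n$ is unbounded.

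The paper closes these gaps differently. First, connectedness of the limit set together with the fact that $\overline{D}$ has at most two ends shows that, if the limit set is not a single point, it contains two distinct \emph{finite} boundary points $\xi\neq\eta\in\partial D$, so no end or mixed case ever arises. Then one fixes $z_0\in D$ and compares with segments having one endpoint \emph{fixed in the interior}: by \cite[Lemma 3.2]{Zi16} the segments $[z_0,\sigma(b_n)]$ are $(A,0)$-quasi-geodesics with uniform $A$, so Theorem~\ref{thm:shadow_lemma} gives $z_n\in[z_0,\sigma(b_n)]$ with $K_D(z_n,\sigma(a_n))\leq R$; since $K_D(z_n,z_0)\to\infty$ and, by convexity, $[z_0,\eta)\subset D$, the points $z_n$ must converge to $\eta$, and then Lemma~\ref{lem:bd_dist} yields the contradiction because $\sigma(a_n)\to\xi\neq\eta$. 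Keeping one endpoint of the comparison segment fixed inside $D$ is what makes the Euclidean limit of the shadow points identifiable; this ingredient, and the reduction to finite boundary points, are what your sketch is missing.
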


\begin{proof} 
Let $L \subset \overline{D}^{\star}$ denote the set of points $x \in\overline{D}^{\star}$ where there exists $t_n \rightarrow \infty$ such that $\sigma(t_n) \rightarrow x$. Suppose for a contradiction that $L$ is not a single point. 

Notice that $L$ is connected and so $L$ contains at least one point in $\partial D$ (since the space $\overline{D}^{\star}$ is Hausdorff). Then, again by connectedness, $L$ must contain at least two points in $\partial D$. So we can find $a_n, b_n \rightarrow \infty$ and distinct $\xi, \eta \in \partial D$ such that $\sigma(a_n) \rightarrow \xi$ and $\sigma(b_n) \rightarrow \eta$. We may also assume that $a_n \leq b_n$ for all $n \in \mathbb{N}$. 

Also by the definition of the Gromov boundary, if $t_n \rightarrow \infty$, then 
\begin{align*}
\sigma(t_n) \overset{Gromov}{\longrightarrow} [\sigma].
\end{align*}

Now fix some $z_0 \in D$. By~\cite[Lemma 3.2]{Zi16} there exists some $A \geq 1$ such that the line segments $[z_0, \sigma(b_n)]$ are $(A,0)$-quasi-geodesics. Then by Theorem~\ref{thm:shadow_lemma}, there exists some $R > 0$ and $z_n \in [z_0, \sigma(b_n)]$ such that 
\begin{align*}
K_D(z_n, \sigma(a_n)) \leq R
\end{align*}
for all $n$. Since $\sigma(b_n) \rightarrow \eta$,  $z_n \in [z_0, \sigma(b_n)]$, and
\begin{align*}
\lim_{n \rightarrow \infty} K_D(z_n, z_0) \geq \lim_{n \rightarrow \infty} K_D(\sigma(a_n), \sigma(0)) - K_D(\sigma(0),z_0) - R = \infty
\end{align*}
we see that $z_n \rightarrow \eta$. Since $\eta \neq \xi$, this contradicts Lemma~\ref{lem:bd_dist}.
\end{proof}

\begin{lemma}\label{lem:cont} Let $D$ be a $\C$-proper convex domain in $\C^d$ and suppose that $(D,K_D)$ is  Gromov hyperbolic. If $T_n \in (0,+\infty]$, $\sigma_n: [0,T_n) \rightarrow D$ is a sequence of geodesics, and $\sigma_n$ converges locally uniformly to a geodesic $\sigma: [0,+\infty) \rightarrow D$, then 
\begin{align*}
\lim_{t \rightarrow \infty} \sigma(t) = \lim_{n \rightarrow \infty} \lim_{t \rightarrow T_n} \sigma_n(t).
\end{align*}
\end{lemma}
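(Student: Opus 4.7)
The plan is to argue by contradiction using an exit-time argument combined with the Euclidean line-segment shadowing already exploited in the preceding lemma and in Lemma~\ref{conv-lem}. First, locally uniform convergence of $\sigma_n$ to $\sigma$ on $[0,+\infty)$ forces $T_n\to+\infty$, since for every $L<+\infty$ the restrictions $\sigma_n|_{[0,L]}$ must eventually be defined. Set $\xi:=\lim_{t\to\infty}\sigma(t)$ and $\xi_n:=\lim_{t\to T_n^-}\sigma_n(t)$, both in $\overline{D}^{\star}\setminus D$: existence follows from the preceding lemma, and when $T_n<\infty$ the propriety of $K_D$ (Theorem~\ref{thm:barth}) gives $\xi_n=\sigma_n(T_n)\in D$, with $K_D(\sigma(0),\xi_n)\to+\infty$ preventing any subsequential limit from lying in $D$. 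It suffices to show that every convergent subsequence of $(\xi_n)$ has limit $\xi$; so suppose by contradiction that, along some subsequence, $\xi_n\to\eta\in\overline{D}^{\star}\setminus D$ with $\eta\neq\xi$, and pick open sets $U_\xi\ni\xi$, $U_\eta\ni\eta$ in $\overline{D}^{\star}$ with $\overline{U_\xi}\cap\overline{U_\eta}=\emptyset$, possible since $\overline{D}^{\star}$ is compact Hausdorff.

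Fix $T_0$ so that $\sigma(t)\in U_\xi$ for all $t\geq T_0$. For $n$ large, $\sigma_n(T_0)\in U_\xi$ by locally uniform convergence, while $\sigma_n(t)\in U_\eta$ for $t$ close to $T_n$ by the preceding lemma combined with $\xi_n\in U_\eta$. Define the exit time
\[
u_n \;:=\; \inf\{\,t\in[T_0,T_n] : \sigma_n(t)\notin U_\xi\,\}.
\]
Continuity of $\sigma_n$ (continuously extended to $T_n$ when $T_n<\infty$ via propriety) yields $\sigma_n(u_n)\in\partial U_\xi$, and after a further subsequence $\sigma_n(u_n)\to p\in\partial U_\xi$ with $p\neq\xi$ and $p\notin\overline{U_\eta}$. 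If $p\in D$, propriety makes $u_n=K_D(\sigma_n(0),\sigma_n(u_n))\to u:=K_D(\sigma(0),p)<\infty$, locally uniform convergence yields $p=\sigma(u)$, and $u\geq T_0$ forces $p\in U_\xi$, contradicting $p\in\partial U_\xi$.

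The main obstacle is the remaining case $p\in\overline{D}^{\star}\setminus D$, in which $u_n\to+\infty$. Here I would invoke the Euclidean line-segment shadowing: by Lemma~3.2 of \cite{Zi16} the segments $[\sigma_n(0),\sigma_n(u_n)]$ and $[\sigma(0),\sigma(L)]$, parametrized by $K_D$-arc length, are $(A,0)$-quasi-geodesics with $A$ uniform in $n,L$ (since $\sigma_n(0)\to\sigma(0)$), so by the Shadowing Lemma (Theorem~\ref{thm:shadow_lemma}) each stays within Hausdorff $K_D$-distance $R$ of the corresponding $K_D$-geodesic with the same endpoints. Fixing $L$ large, locally uniform convergence $\sigma_n(L)\to\sigma(L)$ together with the two shadowings and a diagonal extraction in $n$ produces points $q_L\in[\sigma(0),\xi)\cap D$ and $q'_L\in[\sigma(0),p)\cap D$ with $K_D(q_L,q'_L)\leq 2R$; here $[\sigma(0),\xi)$ and $[\sigma(0),p)$ denote the Euclidean half-open segments or rays from $\sigma(0)$ toward $\xi$ and $p$, which are contained in $D$ by convexity.

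Letting now $L\to+\infty$, the bound $K_D(\sigma(0),q_L)\geq L-R$ forces both $q_L$ and $q'_L$ to escape every relatively compact subset of $D$ along their respective Euclidean rays, so $q_L\to\xi$ and $q'_L\to p$ in $\overline{D}^{\star}$. Since $p\neq\xi$, Lemma~\ref{lem:bd_dist} applied to the $K_D$-bounded pair $(q_L,q'_L)$ yields an immediate contradiction whenever $\xi\in\partial D$ or $p\in\partial D$. When both $\xi$ and $p$ are ends of $\overline{D}$, Proposition~\ref{prop:end_convergence} converts the end-compactification convergences into Gromov boundary convergences $q_L\to\zeta_\xi$ and $q'_L\to\zeta_p$; the bound $K_D(q_L,q'_L)\leq 2R$ then forces $\zeta_\xi=\zeta_p$, which contradicts Lemma~\ref{conv-lem}. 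This produces the desired contradiction, so $\xi_n\to\xi$ in $\overline{D}^{\star}$.
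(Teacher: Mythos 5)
Your proof is essentially correct, but it takes a genuinely different route from the paper's, so let me compare. The paper also reduces to a subsequential limit $\xi_\infty\neq\xi$ and also shadows the geodesics by Euclidean segments via \cite[Lemma 3.2]{Zi16}, but its key step is different: it chooses times $a_n\le c_n\le b_n$ with $\sigma_n(a_n)\to\xi$, $\sigma_n(b_n)\to\xi_\infty$, and an \emph{intermediate} time $c_n$ arranged (using a degenerate distance on $\overline{D}^{\star}$ and, when both $\xi$ and $\xi_\infty$ are ends, the product structure $D=\Omega+\mathbb R v$ of Lemma~\ref{product-two-components}) so that $\sigma_n(c_n)$ converges to a \emph{finite} boundary point $\eta\in\partial D$ with $\eta\neq\xi$; shadowing the segments $[z_0,\sigma_n(c_n)]$ then lets the paper conclude with Lemma~\ref{lem:bd_dist} alone. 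You instead run an exit-time argument and allow the intermediate point $p$ to be an end, disposing of that case with Proposition~\ref{prop:end_convergence}, Lemma~\ref{conv-lem} and the divergence of distances between sequences tending to distinct Gromov boundary points (Corollary~\ref{cor:Gromov_Product}); this trades the paper's intermediate-value claim for the end-to-Gromov dictionary of Section~\ref{Sec:5}, and your elimination of the case $p\in D$ via properness is clean. Three spots should be tightened, none of which affects the soundness of the strategy: (i) the construction of $q_L$ ``from the two shadowings'' is garbled as written --- shadowing $[\sigma(0),\sigma(L)]$ against $\sigma|_{[0,L]}$ yields nothing useful at the common endpoint; but $q_L$ is superfluous: take $q_L:=\sigma(L)$, which tends to $\xi$ by definition of $\xi$, and use only $K_D(\sigma(L),q'_L)\le R$; (ii) when $p$ is an end, ``the Euclidean ray toward $p$'' needs justification: in the two-end case Lemma~\ref{product-two-components} shows the directions of $[\sigma_n(0),\sigma_n(u_n)]$ converge to $\pm v$, and in general you should note that $q'_L$ lies on rays issued from $\sigma(0)$ in directions belonging to $S_\infty(D)$, so once $K_D(\sigma(0),q'_L)\to\infty$ it cannot stay in a bounded region of $\mathbb C^d$, whence $\norm{q'_L}\to\infty$ and $q'_L\to p$ in $\overline{D}^{\star}$; (iii) the quasi-geodesic constant for $[\sigma_n(0),\sigma_n(u_n)]$ must be uniform in $n$: either check that \cite[Lemma 3.2]{Zi16} is uniform over base points ranging in a compact set, or do as the paper does and use a fixed base point $z_0$, absorbing $K_D(z_0,\sigma_n(0))$ into the constants.
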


\begin{proof} The proof is nearly identical to the proof of the previous Lemma. Since $\overline{D}^{\star}$ is compact, it is enough to consider the case when 
\begin{align*}
\lim_{n \rightarrow \infty} \lim_{t \rightarrow T_n} \sigma_n(t)
\end{align*}
exists in $\overline{D}^\star$. Let $\xi = \lim_{t \rightarrow \infty} \sigma(t) \in \overline{D}^{\star}$, $\xi_n = \lim_{t \rightarrow T_n} \sigma_n(t)\in \overline{D}^{\star}$, and 
\begin{align*}
\xi_\infty = \lim_{n \rightarrow \infty} \lim_{t \rightarrow T_n} \sigma_n(t) \in \overline{D}^{\star}.
\end{align*}
Suppose for a contradiction that $\xi \neq \xi_\infty$. 

Since $\lim_{n \rightarrow \infty} \sigma_n(t) = \sigma(t)$ for every $t$, we can pick $a_n \rightarrow \infty$ such that $\sigma_n(a_n) \rightarrow \xi$. We can also pick $b_n \rightarrow \infty$ such that $\sigma_n(b_n) \rightarrow \xi_\infty$. 

\vspace{1mm}
\noindent{\bf Claim.} After possibly passing to a subsequence, there exists $a_n \leq c_n \leq b_n$ such that $\sigma_n(c_n)$ converges to $\eta \in \partial D$ and $\eta \neq \xi$. 

\begin{proof}[Proof of Claim.] Define a distance $d$ on $\overline{D}^{\star}$ as follows: 
\begin{align*}
d(x,y) = \left\{ \begin{array}{ll} 
0 & \text{ if } x=y \\
\norm{x-y} & \text{ if } x,y \in \C^d \\
\infty & \text{ if } x \neq y \text{ and at least one of } x,y \text{ is an end.} \\
\end{array}\right.
\end{align*}
Since $\xi\neq \xi_\infty$, we can pick $a_n \leq c_n \leq b_n$ such that 
\begin{align*}
& \liminf_{n \rightarrow \infty}  \ d(\sigma_n(a_n), \sigma_n(c_n)) > 0, \\
& \liminf_{n \rightarrow \infty}  \ d(\sigma_n(b_n), \sigma_n(c_n)) > 0, \text{ and }\\
& \limsup_{n \rightarrow \infty} \  \norm{c_n} < \infty.
\end{align*}
Such a sequence clearly exists when at least one of $\xi$, $\xi_\infty$ is not an end and Lemma~\ref{product-two-components} implies the existence of the sequence $c_n$ when both $\xi$, $\xi_\infty$ are ends.

Then we can pass to a subsequence such that $\sigma_n(c_n)$ converges to $\eta \in \partial D$ and $\eta \neq \xi$. 
\end{proof}

Now fix some $z_0 \in D$. By~\cite[Lemma 3.2]{Zi16} there exists some $A > 1$ such that the line segments $[z_0, \sigma_n(b_n)]$ are $(A,0)$-quasi-geodesics. Then by Theorem~\ref{thm:shadow_lemma}, there exists some $R > 0$ and $z_n \in [z_0, \sigma_n(b_n)]$ such that 
\begin{align*}
K_D(z_n, \sigma_n(a_n)) \leq R
\end{align*}
for all $n$. Since $\sigma_n(b_n) \rightarrow \eta$,  $z_n \in [z_0, \sigma_n(b_n)]$, and
\begin{align*}
\lim_{n \rightarrow \infty} K_D(z_n, z_0) \geq \lim_{n \rightarrow \infty} K_D(\sigma_n(a_n), \sigma_n(0)) - K_D(\sigma_n(0),z_0) - R = \infty
\end{align*}
we see that $z_n \rightarrow \eta$. Since $\eta \neq \xi$, this contradicts Lemma~\ref{lem:bd_dist}.
\end{proof}

\begin{lemma}\label{lem:well_defined} Let $D$ be a $\C$-proper convex domain in $\C^d$ and suppose that $(D,K_D)$ is Gromov hyperbolic. If $\sigma_1, \sigma_2 : [0,+\infty) \rightarrow D$ are geodesics, then  
\begin{align*}
\lim_{t \rightarrow \infty} \sigma_1(t) = \lim_{t \rightarrow \infty} \sigma_2(t) 
\end{align*}
in $\overline{D}^{\star}$ if and only if $[\sigma_1]=[\sigma_2]$. 
\end{lemma}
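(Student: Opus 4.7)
I prove both directions of the equivalence. For $[\sigma_1] = [\sigma_2] \Rightarrow \lim_{t\to\infty}\sigma_1(t) = \lim_{t\to\infty}\sigma_2(t)$ in $\overline{D}^\star$, set $M := \sup_{t\geq 0} K_D(\sigma_1(t), \sigma_2(t)) < +\infty$ and $\xi_i := \lim_{t\to\infty}\sigma_i(t) \in \overline{D}^\star$. If $\xi_1 \in \partial D$, then Lemma \ref{lem:bd_dist} applied with $z_n = \sigma_1(n)$ and $w_n = \sigma_2(n)$ forces $\sigma_2(n) \to \xi_1$, so $\xi_2 = \xi_1$. If $\xi_1$ is an end, then $\|\sigma_1(t)\|\to\infty$ and the contrapositive of Lemma \ref{infty-lem} forces $\xi_2$ also to be an end; by Proposition \ref{prop:end_convergence}, $\sigma_i(t) \overset{Gromov}{\longrightarrow} \zeta_{\xi_i}$, and combining with $\sigma_i(t) \overset{Gromov}{\longrightarrow} [\sigma_i]$ yields $\zeta_{\xi_1} = \zeta_{\xi_2}$, so the uniqueness clause in Proposition \ref{prop:end_convergence} gives $\xi_1 = \xi_2$.

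For the reverse implication, let $\xi$ denote the common limit in $\overline{D}^\star$. If $\xi$ is an end, Proposition \ref{prop:end_convergence} immediately gives $\sigma_i(t) \overset{Gromov}{\longrightarrow} \zeta_\xi$, so $[\sigma_1] = \zeta_\xi = [\sigma_2]$. Assume henceforth $\xi \in \partial D$. Fix $z_0 \in D$. For each $i \in \{1,2\}$ and each $n\in\mathbb N$, let $\gamma_i^n$ be the Kobayashi-arc-length parametrization of the $K_D$-geodesic segment from $z_0$ to $\sigma_i(n)$. A diagonal Arzel\`a-Ascoli argument produces a locally uniform subsequential limit $\gamma_i^\infty : [0,\infty) \to D$, which is a $K_D$-geodesic ray from $z_0$; the description of the topology of $\overline{D}^G$ given after Definition \ref{grom-def2}, together with the fact that $\sigma_i(n) \overset{Gromov}{\longrightarrow} [\sigma_i]$, yields $[\gamma_i^\infty] = [\sigma_i]$. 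By convexity the Euclidean segment $[z_0, \sigma_i(n)]$ lies in $D$, and by \cite[Lemma 3.2]{Zi16} it is an $(A_{z_0}, 0)$-quasi-geodesic with a constant $A_{z_0}$ independent of $i$ and $n$ (this is the same result used in the proof of Lemma \ref{lem:cont}). Since $\gamma_i^n$ shares its endpoints with $[z_0, \sigma_i(n)]$, the Shadowing Lemma (Theorem \ref{thm:shadow_lemma}) yields $R>0$ such that the two have Hausdorff $K_D$-distance at most $R$.

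Since $\sigma_i(n) \to \xi$ in the Euclidean topology, the segments $[z_0, \sigma_i(n)]$ converge (as sets in $\mathbb C^d$) to $[z_0, \xi]$, which is the same set for both values of $i$. Let $\eta:[0,\infty)\to D$ be the Kobayashi-arc-length parametrization of the open segment $(z_0,\xi) \subset D$; its domain is $[0,\infty)$ by completeness of $K_D$, and \cite[Lemma 3.2]{Zi16} shows $\eta$ is an $(A_{z_0},0)$-quasi-geodesic ray, so (by the one-sided analogue of Remark \ref{obs:limits_of_quasi_geod}, an immediate consequence of the Shadowing Lemma) $\eta$ has a Gromov limit $[\eta]\in\partial_G D$. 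A compactness argument, passing to the limit $n\to\infty$ in the shadowing bound, gives for each $t\geq 0$ a point $p_i(t) = \eta(s_i(t))$ with $K_D(\gamma_i^\infty(t), p_i(t))\leq R$; the triangle inequality together with $K_D(z_0, \eta(s))\leq s$ (arc-length parametrization) gives $s_i(t)\geq t-R$, hence $s_i(t)\to\infty$. Consequently $p_i(t) \overset{Gromov}{\longrightarrow} [\eta]$, and since $\gamma_i^\infty(t)$ is within $K_D$-distance $R$ of $p_i(t)$, also $\gamma_i^\infty(t) \overset{Gromov}{\longrightarrow} [\eta]$. Therefore $[\sigma_i] = [\gamma_i^\infty] = [\eta]$ for both $i$, and so $[\sigma_1] = [\sigma_2]$.

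The main obstacle is the boundary case of the ``$\Rightarrow$'' direction: one must show that a geodesic ray's Gromov limit is determined by its Euclidean limit in $\partial D$. The key idea is to use the Kobayashi reparametrization of the Euclidean ray $(z_0, \xi)$ as a common quasi-geodesic reference for both $\gamma_1^\infty$ and $\gamma_2^\infty$, the Shadowing Lemma then reducing the problem to the standard fact that $K_D$-bounded perturbations preserve Gromov limits. Crucially, this argument only uses the quasi-geodesic property of Euclidean segments emanating from the \emph{fixed} basepoint $z_0$ (\cite[Lemma 3.2]{Zi16}), which is already available from the preceding section, rather than any uniform quasi-geodesic property with both endpoints varying.
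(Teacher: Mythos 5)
Your proof is correct. The ``only if'' direction and both end cases are handled exactly as in the paper (Lemma \ref{infty-lem}, Lemma \ref{lem:bd_dist}, and the ``moreover'' part of Proposition \ref{prop:end_convergence}), but in the boundary case of the ``if'' direction you take a genuinely different route. The paper proves asymptoticity directly: fixing $T$, it uses \cite[Lemma 3.2]{Zi16} and the Shadowing Lemma twice to produce a parameter $S$ with $K_D(\sigma_1(T),\sigma_2(S))\leq 2R+1$ --- the key step being that, since the two Euclidean segments $[z_0,\sigma_1(t)]$ and $[z_0,\sigma_2(t)]$ converge to the same segment $[z_0,\xi]$, one can find points $z_t,w_t$ on them with $K_D(z_t,w_t)\to 0$ --- and concludes with the explicit bound $\sup_t K_D(\sigma_1(t),\sigma_2(t))\leq 4R+2+K_D(\sigma_1(0),\sigma_2(0))$, hence $[\sigma_1]=[\sigma_2]$. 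You instead replace each $\sigma_i$ by a geodesic ray $\gamma_i^\infty$ from the common basepoint $z_0$ (Arzel\`a--Ascoli plus the description of the topology of $\overline{D}^G$, which gives $[\gamma_i^\infty]=[\sigma_i]$) and show that both rays are shadowed by the single quasi-geodesic ray $[z_0,\xi)$, so both classes equal $[\eta]$. The inputs are the same (\cite[Lemma 3.2]{Zi16} and the Shadowing Lemma), but your decomposition is more conceptual --- it identifies the common Gromov point as the class of the straight ray toward $\xi$ --- at the cost of the extra limiting step $[\gamma_i^\infty]=[\sigma_i]$ and a compactness argument to pass the shadowing bound to the limit, whereas the paper's argument is more quantitative and works with $\sigma_1,\sigma_2$ directly. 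One small point to tidy: what you actually need from $\eta$ is only that the curve $[z_0,\xi)$ is a quasi-geodesic (so it has a Gromov limit) together with the upper bound $K_D(z_0,\eta(s))\leq s$ for the arc-length parametrization; whether the arc-length parametrization itself satisfies the lower quasi-geodesic inequality with the constant from \cite[Lemma 3.2]{Zi16} is immaterial to the argument and would need a separate (easy) justification if you insist on stating it.
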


\begin{proof} First suppose that $[\sigma_1]=[\sigma_2]$ and let $\xi_j= \lim_{t \rightarrow \infty} \sigma_j(t)$ in $\overline{D}^{\star}$. Since 
\begin{align*}
\sup_{t \geq 0} K_D(\sigma_1(t), \sigma_2(t)) < +\infty, 
\end{align*}
Lemma~\ref{infty-lem} implies that $\xi_1 \in \mathbb C^d$ if and only if $\xi_2 \in \mathbb C^d$. If $\xi_1 \notin \mathbb C^d$, then $\xi_1 = \xi_2$ by  the ``moreover'' part of Proposition~\ref{prop:end_convergence}. If $\xi_1 \in \mathbb C^d$, then Lemma~\ref{lem:bd_dist} implies that $\xi_1 = \xi_2$. So in either case
\begin{align*}
\lim_{t \rightarrow \infty} \sigma_1(t) = \lim_{t \rightarrow \infty} \sigma_2(t).
\end{align*}

Next suppose that 
\begin{align*}
\lim_{t \rightarrow \infty} \sigma_1(t) = \lim_{t \rightarrow \infty} \sigma_2(t) = \xi \in \overline{D}^{\star}.
\end{align*}
 If $\xi \notin \mathbb C^d$, then $[\sigma_1]=[\sigma_2]$ by Proposition~\ref{prop:end_convergence}. So we may assume that $\xi \in \mathbb C^d$. 

Fix $T > 0$. We will bound $K_D(\sigma_1(T), \sigma_2(T))$ from above.  Then fix some $z_0 \in D$. By~\cite[Lemma 3.2]{Zi16} there exists some $A \geq 1$ such that the line segments $[z_0, \sigma_j(t)]$ are $(A,0)$-quasi-geodesics. Then by Theorem~\ref{thm:shadow_lemma}, there exists some $R > 0$ such that: for every $t \geq T$, there exists  $z_t \in [z_0, \sigma_1(t)]$ with
\begin{align*}
K_D(z_t, \sigma_1(T)) \leq R.
\end{align*}
Since 
\begin{align*}
\lim_{t \rightarrow \infty} \sigma_1(t) = \lim_{t \rightarrow \infty} \sigma_2(t) 
\end{align*}
and $K_D(z_t, \sigma_1(0)) \leq T + R$, there exists $w_t \in  [z_0, \sigma_2(t)]$ such that 
\begin{align*}
\lim_{t \rightarrow \infty} K_D(w_t, z_t) =0.
\end{align*}
Now fix $t $ sufficiently large such that 
\begin{align*}
K_D(w_t, z_t) \leq 1.
\end{align*}
By Theorem~\ref{thm:shadow_lemma} there exists $S \in [0,t]$ such that 
\begin{align*}
K_D(\sigma_2(S), w_t) \leq R.
\end{align*}
Then 
\begin{align*}
K_D(\sigma_1(T), \sigma_2(S)) \leq 2R+1.
\end{align*}
Since 
\begin{align*}
2R+1 & \geq K_D(\sigma_1(T), \sigma_2(S)) \geq \abs{ K_D(\sigma_1(T), \sigma_1(0)) - K_D(\sigma_2(S), \sigma_2(0))}-K_D(\sigma_1(0), \sigma_2(0)) \\
& = \abs{T-S} - K_D(\sigma_1(0), \sigma_2(0)),
\end{align*}
we see that 
\begin{align*}
K_D(\sigma_1(T), \sigma_2(T)) \leq K_D(\sigma_1(T), \sigma_2(S)) + \abs{T-S} \leq 4R+2 + K_D(\sigma_1(0), \sigma_2(0)).
\end{align*}

Since $T > 0$ was arbitrary, we have 
\begin{align*}
\sup_{t \geq 0} K_D(\sigma_1(t), \sigma_2(t)) < +\infty
\end{align*}
and hence $[\sigma_1]=[\sigma_2]$. 
\end{proof}

\subsection{Proof of Theorem~\ref{ext-thm}}

Define $\Phi: \overline{D}^G \rightarrow \overline{D}^{\star}$ by $\Phi(z) = z$ when $z \in D$ and 
\begin{align*}
\Phi([\sigma]) = \lim_{t \rightarrow \infty} \sigma(t)
\end{align*}
when $[\sigma] \in \partial_G D$. By Lemma~\ref{lem:well_defined}, $\Phi$ is well defined and one-to-one. By Lemma~\ref{lem:cont}, $\Phi$ is continuous. Since $\Phi(D) = D$, $\overline{D}^G$ is compact, and $D$ is dense in $\overline{D}^{\star}$, it follows that $\Phi$ is onto. The map $\Phi$ being continuous, one-to-one and onto, between compact Hausdorff spaces, it is a homeomorphism.

\section{Proof of Corollary~\ref{eucl-thm},  Theorem~\ref{commut-thm} and Corollary~\ref{str-cor}}\label{Sec:7}

\begin{proof}[Proof of Corollary~\ref{eucl-thm}] 

Since $F$ is a homeomorphism, it must be a proper map. Thus, since $(D,K_D)$ is a proper metric space, we see that $(\Omega, K_\Omega)$ is a proper metric space. So $\Omega$ is $\mathbb{C}$-proper by Theorem~\ref{thm:barth}. 

According to \cite{BaBo00}, if $D$ is strongly  pseudoconvex, then $(D,K_D)$ is Gromov hyperbolic and the identity map $\id_D: D \rightarrow D$ extends to a homeomorphism ${\widetilde\id}_D:\overline{D}
\rightarrow\overline{D}^G$ (and $\overline{D}=\overline{D}^\star$). On the other hand, if $D$ is convex, then Theorem~\ref{ext-thm} implies that the identity map $\id_D: D \rightarrow D$ extends to a homeomorphism ${\widetilde\id}_D:\overline{D}^{\star}
\rightarrow\overline{D}^G$.

Since $F:(D,K_D) \rightarrow (\Omega, K_\Omega)$ is a quasi-isometry, $(\Omega,K_{\Omega})$ is also Gromov hyperbolic (see for instance \cite{GhHa}, Theorem~12, p.88). Then Theorem~\ref{ext-thm} implies that the identity map $\id_\Omega: \Omega \rightarrow \Omega$ extends to a homeomorphism ${\widetilde\id}_\Omega:\overline{\Omega}^\star
\rightarrow\overline{\Omega}^G$.

Finally, since $F: (D,K_D) \rightarrow (\Omega,K_{\Omega})$ is a quasi-isometry and $F$ is a homeomorphism,  $F$ extends to a homeomorphism $\widetilde{F}:\overline{D}^G \rightarrow \overline{\Omega}^G$ (see for instance \cite{GhHa}, Proposition~14, p.128).  Hence, 
$F$ extends to the homeomorphism $({\widetilde\id}_\Omega)^{-1}\circ \widetilde{F}\circ {\widetilde\id}_D:\overline{D}^{\star} \rightarrow \overline{\Omega}^{\star}$. 
\end{proof}

\begin{proof}[Proof of Theorem~\ref{commut-thm}] Let $(X,d)$ be a proper geodesic Gromov hyperbolic metric space and let $f, g$ be 1-Lipschitz self-maps which commute under composition and satisfy \eqref{com-eq} (or equivalently \eqref{different Wolff}). Then the family $\{f^m \circ g^{n}\}_{m,n \in \mathbb N}$ is equicontinuous and according to Proposition~\ref{prop:commuting}, for every $m \geq 0$, there exists $n(m) \geq 0$ such that for every $x \in X$, the set $\{f^m \circ g^{n(m)}(x)\}$ is relatively compact in $X$. Then it follows from the Ascoli-Arzel\`a Theorem that there exist sequences $\{m_k\}, \{n_k\}\subset \mathbb N$ such that $f^{m_k}\circ g^{n_k}$ converges uniformly on compacta to a 1-Lipschitz map $h:X \to X$. Moreover, we may assume
\[
p_k:=m_{k+1}-m_k\to \infty, \quad p'_k:=n_{k+1}-n_k\to \infty,
\]
and
\[
q_k:=p_k-m_k\to \infty, \quad q^\prime_k:=p'_k-n_k\to \infty.
\]
Since
\[
(f^{p_k}\circ g^{p'_k})(f^{m_k}(g^{n_k}(z)))=(f^{m_{k+1}}\circ g^{n_{k+1}})(z)\to h(z),
\]
it follows that, up to subsequences, $f^{p_k}\circ g^{p'_k}$ converges uniformly on compacta to a 1-Lipschitz self-map $\rho:X\to X$, and, moreover
\[
h\circ \rho=\rho\circ h=h. 
\]
Next,
\[
(f^{q_k}\circ g^{q'_k})(f^{m_k}(g^{n_k}(z)))=(f^{p_k}\circ g^{p_k'})(z)\to \rho(z),
\] 
hence, again passing to a subsequence if necessary, $f^{q_k}\circ g^{q'_k}$ converges uniformly on compacta to a 1-Lipschitz map $\chi:X \to X$ such that
\[
\chi \circ h=h\circ \chi =\rho.
\]
Therefore,
\[
\rho \circ \rho=\chi \circ h\circ \rho=\chi \circ h=\rho.
\]
The statement follows then from the identities $f\circ \rho=\rho\circ f$ and $g\circ \rho=\rho\circ g$, which imply in particular that $f(M)\subseteq M$, $g(M)\subseteq M$. 

Now, since
\[
(f^{p_k-1}\circ g^{p'_k-1})\circ (f\circ g)\to \rho,
\]
passing to a subsequence if necessary, $f^{p_k-1}\circ g^{p'_k-1}$ converges uniformly on compacta to a 1-Lipschitz map $\psi:X \to X$. Moreover, $\psi(M)\subset M$. Hence, for $z\in M$, 
\[
(\psi \circ (f\circ g))(z)=z.
\]
Therefore, $f\circ g$ is an automorphism of $M$. Since $f\circ g=g\circ f$, it follows that both $f$ and $g$ are automorphisms of $M$. This proves Theorem~\ref{commut-thm}. 
\end{proof}

\begin{proof}[Proof of Corollary~\ref{str-cor}.] Since holomorphic maps are 1-Lipschitz for the Kobayashi distance and since the Euclidean boundary of a $C^2$-smooth strongly pseudoconvex domain can be identified with its Gromov boundary (see~\cite{BaBo00}), there exist $M$ and  $\rho$ as in Theorem~\ref{commut-thm}. Moreover, $\rho$ is holomorphic as a limit, on compacta, of holomorphic maps, and $M$ is a holomorphic retract of $D$.

We first show  that $M$ is biholomorphic to $\mathbb B^k$, for some $1\leq k\leq d$. Let $z_0 \in M$ and let $i:M \rightarrow D$ be the inclusion map. Then the sequence $\{z_{n}:=f^{n} \circ i(z_0)\}$ converges to $p_f \in \partial D$. Since $D$ is strongly pseudoconvex, there exist, for every $n$, $0 < r_{n} < 1$ with $\lim_{n \rightarrow \infty} r_{n} = 1$ and a holomorphic injective map $\sigma_{n}: D \rightarrow \mathbb B^n$ such that 
\begin{align*}
\{ z \in \mathbb{C}^d : \norm{z} < r_n\} \subset \sigma_{n}(D)
\end{align*}
 and $\sigma_{n}(z_{n}) = 0$  (see~\cite{DGZ2016, DFW2014}).

We denote by $\varphi_{n}:=\sigma_{n} \circ f^{n} \circ i$ and $\Omega_{n}:=\sigma_{n}(D)$. Then $\varphi_{n}$ is a holomorphic isometry from $(M,K_M)$ into $(\Omega_{n},K_{\Omega_{n}})$. Since $\Omega_{n}$ converges, for the local Hausdorff convergence, to $\mathbb B^n$, we may extract from $\{\varphi_{n}\}$ a subsequence, still denoted $\{\varphi_{n}\}$, that converges uniformly on compact subsets of $M$ to some holomorphic isometry $\psi:M \rightarrow \mathbb B^d$. It follows that $\psi(M)$ is a complex totally geodesic submanifold of $\mathbb B^n$ passing through the origin. Hence after post composing $\psi$ with a rotation, we can assume that $\psi(M)=\mathbb B^k \times \{0\}$, where $k$ is the complex dimension of $M$.

Now view $\psi$ as a biholomorphism from $M$ to $\B^k$. Then $\tilde f:=\psi\circ f \circ \psi^{-1}$ and $\tilde g:=\psi\circ g \circ \psi^{-1}$ are commuting automorphisms of $\B^k$ with no fixed points in $\B^k$. Let $x_{\tilde f}, x_{\tilde g}\in \partial \B^k$ be the Denjoy-Wolff point of $\tilde f$, $\tilde g$. If $x_{\tilde f}=x_{\tilde g}$, then $\{\tilde g^m \circ \tilde f^n(z)\}_{m,n\in \N}$ is compactly divergent for every $z\in \B^k$, contradicting   Proposition \ref{prop:commuting}. Hence, $x_{\tilde f}\neq x_{\tilde g}$. It follows  (see \cite{Br99}) that $\tilde f, \tilde g$ are hyperbolic automorphisms of $\B^k$. Moreover, let  $\tilde \Delta\subset \B^k$ be  the unique complex geodesic such that $x_{\tilde f}, x_{\tilde g}\in \overline{\tilde \Delta}$ ($\tilde \Delta\subset \B^k$ is, in fact, the intersection of $\B^k$ with the affine complex line joining $x_{\tilde f}$ with $x_{\tilde g}$). Then (see \cite{Br99}), $\tilde f(\tilde \Delta)=\tilde \Delta$ and $\tilde g(\tilde \Delta)=\tilde \Delta$. It follows that $\Delta:=\psi^{-1}(\tilde\Delta)$ is a complex geodesic in $M$, hence in $D$, such that $f(\Delta)=\Delta$ and $g(\Delta)=\Delta$. Clearly, $p_f, p_g\in \partial \Delta$. Moreover, since $\tilde \Delta$ is a holomorphic retract of $\B^k$, then $\Delta$ is a holomorphic retract of $M$. Hence, since $M$ is a holomorphic retract of $D$, it follows that $\Delta$ is a holomorphic retract of $D$.

The last statement about the existence of admissible limits for $f$ at $p_g$ and $g$ at $p_f$ follows at once from \cite[Theorem 2.7]{BrC}.
\end{proof}

\section{Examples}\label{Sec:example}

In this last section we provide some examples showing that the hypotheses in Corollary~\ref{eucl-thm} are optimal. 

\begin{example}
Let $D:=\D\times \C$. Note that $D$ is convex, unbounded but not $\C$-proper. Consider the automorphism of $D$ given by $F(z,w)=(z,w+g(z))$, where $g:\D \to \C$ is a holomorphic map which is continuous at no points of $\partial \D$. Hence $F$ does not extend continuously at any point of $\partial D$.
\end{example}

\begin{example}
Let $D=\D\times \D$. Note that $D$ is convex, $\C$-proper but $(D,K_D)$ is not Gromov hyperbolic. Pick points $z_n, w_n \in D$ with $z_n \rightarrow (1,0)$, $w_n \rightarrow (1,1/2)$. Note that 
\begin{align*}
R:=\sup_{n \geq 0} K_D(z_n,w_n) < \infty.
\end{align*}
Then for each integer $n$, pick a small tubular neighborhood $U_n$ of a geodesic joining $z_n$ to $w_n$. By shrinking each $U_n$ and passing to a subsequence we can assume that $\overline{U}_1, \overline{U}_2, \dots$ are all disjoint and the $K_D$-diameter of each $U_n$ is less than $2R$. Now for each $n$ construct a homeomorphism $f_n : U_n \rightarrow U_n$ with $f|_{\partial U_n} = { \rm id}$ where $f_n(z_n) = w_n$ and $f_n(w_n) = z_n$ if $n$ is odd and $f_n(z_n) = z_n$ and $f_n(w_n) = w_n$ if $n$ is even. Let $U = \cup_{n \geq 1} U_n$ and construct a map $f: D \rightarrow D$ where $f|_{D\setminus U} = \id$ and $f|_{U_n} = f_n$. Then $f$ is a $(1,2R)$-quasi-isometry but $f$ does not extend continuously to $\partial D$. 
\end{example}

The previous example is in sharp contrast with the holomorphic case. In fact, if $D\subset \C^d$ is a convex domain and $F:\D^d\to D$ is a biholomorphism, a result of Suffridge \cite{Suf} implies that, up to composition with an affine transformation, $F(z_1,\ldots, z_d)=(f_1(z_1),\ldots, f_d(z_d))$, where $f_j:\D \to \C$ are convex maps. Therefore, $F$ extends as a homeomorphism from $\overline{\D^d}$ to $\overline{D}^\star$.

\medskip

\begin{example}\label{ex}
According to Theorem 1.8 in~\cite{Zi17}  the convex domain 
\begin{align*}
D = \{ (z_0, z) \in \C \times \C^d : { \rm Im}(z_0) > \| z\| \}
\end{align*}
has Gromov hyperbolic Kobayashi metric. By Subsection 1.3 in~\cite{Zi17} the map 
\begin{align*}
f(z_0,z_1,\dots,z_d) = \left( \frac{1}{z_0+i}, \frac{z_1}{z_0+i}, \dots, \frac{z_d}{z_0+i}\right)
\end{align*}
induces a biholomorphism of $D$ onto a bounded $\mathbb C$-convex domain $\Omega$ (which is not convex). Further, the set $\{0\} \times \B_d$ is contained in $\partial \Omega$ and $f^{-1}$ maps this set to  $\{\infty\}$ (in the end compactification of $\overline{D}$). Hence, $f$ does not extend continuously to $\overline{D}^{\star}$.
\end{example}

The domain  $\Omega$ in the previous example is a Jordan domain, namely its boundary is homeomorphic to the Euclidean unit sphere in $\mathbb R^{2(d+1)}$.  Hence, Example~\ref{ex} also shows  the failure  of the Carath\'eodory extension theorem  for (non quasiconformal) univalent maps in complex dimension strictly greater than one. 

We should note that a version of the Carath\'eodory extension theorem  does hold for quasiconformal maps in $\mathbb R^n$ (see, for instance,  \cite[Cor. 6.5.15]{GMP}).

\end{document}